\title[Conformal restriction and unitarizing measures for Virasoro]{Infinitesimal conformal restriction and unitarizing measures for Virasoro algebra}
\newcolumntype{P}[1]{>{\centering\arraybackslash}p{#1}}
\let\OLDthebibliography\thebibliography
\renewcommand\thebibliography[1]{
  \OLDthebibliography{#1}
  \setlength{\parskip}{1pt}
  \setlength{\itemsep}{2pt}
}
\newtheorem{thm}{Theorem}[section]
\newtheorem{cor}[thm]{Corollary}
\newtheorem{lem}[thm]{Lemma}
\newtheorem{prop}[thm]{Proposition}
\theoremstyle{definition} 
\newtheorem{df}[thm]{Definition}
\newtheorem{remark}[thm]{Remark}
\newcommand{\Rb}{\mathbb{R}}
\newcommand{\Cb}{\mathbb{C}}
\global\long\def\ii{\mathfrak{i}}
\global\long\def\ee{\mathrm{e}}
\renewcommand{\liminf}{\varliminf}
\renewcommand{\limsup}{\varlimsup}
\newcommand{\norm}[1]{\lVert #1 \rVert}
\newcommand{\mc}[1]{\mathcal{#1}}
\newcommand{\m}[1]{\mathbb{#1}}
\renewcommand\Re{\operatorname{Re}}
\renewcommand\Im{\operatorname{Im}}
\def\PSL{\operatorname{PSL}}
\def\SLE{\operatorname{SLE}}
\def\mob{\mathrm{M\ddot{o}b}}
\def\Diff{\operatorname{Diff}}
\def\supp{\operatorname{supp}}
\def\g{\gamma}
\def\d{\delta}
\def\l{\lambda}
\def\k{\kappa}
\def\o{\omega}
\def\O{\Omega}
\def\vare{\varepsilon}
\def\Chat{\hat{\m{C}}}
\def\dd{\mathrm{d}}
\def\1{\mathbf{1}}
\newcommand{\enu}{{\vare \nu}}
\newcommand{\Loop}{\mathit{Loop}}
\newcommand{\Div}{\mathrm{Div}}
\def \1{\mathbbm{1}}
\begin{document}

\author[Maria Gordina]{Maria Gordina}
\address{Department of Mathematics\\
University of Connecticut\\
Storrs, CT 06269,  U.S.A.}
\email{\protect\url{maria.gordina@uconn.edu}}

\author[Wei Qian]{Wei Qian}
\address{City University of Hong Kong --
Department of Mathematics and Department of Physics (on leave from CNRS, Universit\'e Paris-Saclay, Laboratoire de Math\'ematiques d'Orsay)}
\email{weiqian@cityu.edu.hk}

\author[Yilin Wang]{Yilin Wang}
\address{Institut des Hautes Etudes Scientifiques \\
Bures-sur-Yvette, 91440, France}
\email{\protect\url{yilin@ihes.fr}}

\keywords{Schramm--Loewner evolution, unitarizing measure, Virasoro algebra, conformal restriction}

\subjclass{Primary 60J67; Secondary 30C55, 58J65}

\begin{abstract}
We use the SLE$_\kappa$ loop measure to construct a natural representation of the Virasoro algebra of central charge $c = c(\kappa) \le 1$. In particular, we introduce a non-degenerate bilinear Hermitian form (and non positive-definite) using the SLE loop measure and show that the representation is indefinite unitary. Our proof relies on the infinitesimal conformal restriction property of the SLE loop measure.

\bigskip

\noindent {R{\tiny ÉSUMÉ}.} Nous utilisons  la mesure sur les lacets de SLE$_\kappa$ pour construire une repr\'esentation naturelle de l’alg\`ebre de Virasoro de charge centrale $c = c(\kappa) \le 1$. En particulier, nous introduisons une forme hermitienne non-dégénérée (et non définie positive), en utilisant la mesure sur les lacets de SLE, et nous montrons que la représentation est unitaire indéfinie. Notre preuve repose sur la propriété de restriction conforme infinitésimale de la mesure sur les lacets de SLE.
\end{abstract}

\maketitle

\section{Introduction}

In \cite{AiraultMalliavin2001}, Airault and Malliavin asked if one can find a probability measure on the group of diffeomorphisms of the unit circle $\Diff(S^1)$ such that the associated $L^2$ space contains a closed subspace of holomorphic functionals on which the Virasoro algebra acts unitarily via Kirillov's representation. If they exist, such measures are called \emph{unitarizing measures}. Airault, Malliavin, and Thalmaier later answered the question negatively in  \cite{AiraultMalliavinThalmaier2002}. 
Finding modifications of unitarizing measures has then inspired many works, e.g. \cite{Airault_affine,airault2003realization,AiraultMalliavinThalmaier2004,AiraultMalliavinThalmaier2010,AiraultNeretin2008,Fang2002,Kontsevich_SLE,malliavin_heat}. 

Circle homeomorphisms can be identified with Jordan curves on the Riemann sphere $\Chat = \m C \cup\{\infty\}$ via conformal welding. Hence, measures on the family of circle homeomorphisms can also be described as measures on the family of Jordan curves.
In \cite{Kontsevich_SLE}, Kontsevich and Suhov postulated a few axioms on loop measures without constructing examples, but they hinted at links to both SLEs and unitarizing measures. These axioms are closely related to the conformal restriction covariance of chordal SLE outlined by Lawler, Schramm, and Werner in \cite{LSW_CR_chordal} (see also \cite{MR2178043}).
Now, explicit examples of measures satisfying these axioms (often referred to as Malliavin--Kontsevich--Suhov measures) have been constructed by Zhan in \cite{zhan2020sleloop}, called the \emph{SLE loop measure}. It is a one-parameter family of $\sigma$-finite measures $\mu^c_{\Chat}$ indexed by $0 < \k \le 4$ on Jordan curves, where the parameter space is in one-to-one correspondence with the central charge $c = c (\k) := (6-\k) (3\k - 8)/(2\k) \in (-\infty, 1]$. However, the precise relation of \cite{Kontsevich_SLE} to the unitarizing measures was more allusive. 

In the spirit of the Berezin quantization, already mentioned in \cite{AiraultMalliavin2001} (see also \cite{AST}), one candidate of the unitarizing measure is a formal measure of the form $\ee^{\l I} \dd m$, where $\l$ is a multiple of the central charge, $I$ is a K\"ahler potential for the unique right-invariant K\"ahler metric on the homogeneous space of circle diffeomorphisms $\mob (S^1)\backslash \Diff(S^1)$, namely the Weil--Petersson metric, and $\dd m$ is the ``Weil--Petersson volume form''. Since $\Diff (S^1)$ is infinite-dimensional, the volume form does not make literal sense. 

On the other hand, \cite{W2} proved that the aforementioned K\"ahler potential (whose explicit expression is given in \cite{TT06}) equals the Loewner energy $I^L(\cdot)$ for Jordan curves introduced by Rohde and the last author \cite{RW}.  
Moreover, Carfagnini and the last author showed  in \cite{carfagnini2023onsager} that the Loewner energy is the action functional of the SLE loop measure.
Roughly speaking, they defined a certain $\vare$-neighborhood $B_\vare (\g)$ of an analytic Jordan curve $\g$, such that for all $0< \k \le 4$, 
$$\lim_{\vare \to 0}\frac{\mu^c_{\Chat} (B_\vare(\g))}{\mu^c_{\Chat} (B_\vare(S^1))} = \exp \left(\frac{c}{24} I^L(\g)\right).$$
This result shows that the SLE loop measure may be viewed as the measure $\ee^{(c/24)I^L} \dd m$ and should be relevant to the unitarizing measure.

The goal of this work is to construct a Virasoro representation using the SLE loop measure. 
We summarize our construction below while highlighting a few differences with the original proposal by Airault and Malliavin. 
\begin{itemize}
    \item Instead of $\Diff(S^1)$, we consider the space of Jordan curves in $\m C$ separating $0$ and $\infty$, denoted by $\Loop$.
    \item We let $\mu^c$ be the measure of SLE$_\k$ loops restricted to $\Loop$, where $c = c (\k) \in (-\infty, 1]$. The measure $\mu^c$ is infinite and $\sigma$-finite (instead of being a probability measure).
    \item Let $\tau : z \mapsto 1/\bar z$. We introduce the bilinear form
    $$(F,G)_{\mu^c} : = \int_\Loop \overline {F(\g)}\, G (\tau (\g)) \, \dd \mu^c (\g)$$
    for $F, G \in L^2_\tau (\Loop, \mu^c)$, the $L^2$ space with respect to $\mu^c$ intertwined with $\tau$, as defined in Definition~\ref{df_L_2}.
    This bilinear form is Hermitian, non-degenerate, but not positive-definite. See Proposition~\ref{p.BilinearForm}.
  \item The holomorphic vector fields $v_k = - z^{k+1} \dd/\dd z$  and $\ii v_k = -\ii z^{k+1} \dd/\dd z$ act on $\m C \setminus \{0\}$. They induce an action of the complex Witt algebra generated by $ (L_k^{\m C})_{k \in \m Z}$ on $L^2_\tau (\Loop, \mu^c)$. We use this representation of the Witt algebra instead of Kirillov's representation. We note that this representation
  is also used in  \cite{ ChavezPickrell2014,ChavezPhDThesis2015} for SLE$_{8/3}$ loop measure where $c = 0$.
  \item We define explicitly a central extension $\mc V_c$ (the complex Virasoro algebra) generated by densely defined closable operators $\{\mc L_k\}_{k \in \m Z} \cup \{\mc K\}$ acting on $L^2_\tau (\Loop, \mu^c)$. The action of $\mc L_k$ is modified from that of $L^{\m C}_k$, and $\mc K$ acts as the identity map. See Proposition~\ref{prop:Phi_example} and Corollary~\ref{cor:vir_gen}.
\end{itemize}

Our main result is the following.
\begin{thm}[See Theorem~\ref{thm:adjoint}]
The adjoint of the Virasoro generator $\mc L_k$ with respect to $(\cdot, \cdot)_{\mu^c}$ is $\mc L_{-k}$ for all $k \in \m Z$. 
\end{thm}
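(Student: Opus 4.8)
The plan is to reduce the adjoint identity $\mc L_k^* = \mc L_{-k}$ to an infinitesimal version of the conformal-restriction invariance of the SLE loop measure $\mu^c$, restricted to $\Loop$. First I would recall that $\mc L_k$ is a modification of the Witt-algebra generator $L_k^{\m C}$ induced by the holomorphic vector field $v_k = -z^{k+1}\partial_z$, together with the extra term coming from the central charge; since $\mc K$ acts as the identity (which is self-adjoint) and the cocycle that defines $\mc V_c$ is real, it suffices to handle the non-central part, i.e.\ to show that for $F,G$ in a suitable dense domain,
\[
(\mc L_k F, G)_{\mu^c} = (F, \mc L_{-k} G)_{\mu^c},
\]
where the bilinear form is $(F,G)_{\mu^c} = \int_\Loop \overline{F(\g)}\,G(\tau(\g))\,\dd\mu^c(\g)$ with $\tau(z)=1/\bar z$. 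The key point is that $\tau$ conjugates $v_k$ to (a real multiple of) $v_{-k}$: if $\gamma_t$ is the flow of $v_k$ applied to $\gamma$, then $\tau(\gamma_t)$ is the flow of the $\tau$-pushforward of $v_k$, and a direct computation on $\m C\setminus\{0\}$ shows $\tau_* v_k = -\overline{v_{-k}}$ up to the right sign/conjugation bookkeeping. This is exactly the algebraic mechanism that turns differentiation along $v_k$ on the first slot into differentiation along $v_{-k}$ on the second slot.

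Next I would make this rigorous by differentiating the conformal-restriction covariance of $\mu^c$. Concretely, let $\{\phi_t^k\}$ be the one-parameter family of conformal maps generated by $v_k$ (defined on an annular neighborhood of a given analytic loop $\gamma$, and fixing $0$ and $\infty$ to first order so that the flow stays in $\Loop$). The SLE loop measure transforms under $\phi_t^k$ with an explicit Radon--Nikodym derivative involving the central charge $c$ and a Schwarzian/Loewner-type term; differentiating at $t=0$ produces precisely the correction term that distinguishes $\mc L_k$ from $L_k^{\m C}$. Applying this to the integral defining $(\mc L_k F,G)_{\mu^c}$, I would change variables $\gamma\mapsto \phi_t^k(\gamma)$ under $\mu^c$, use that $\tau\circ\phi_t^k = \psi_t\circ\tau$ for the conjugate flow $\psi_t$ generated by (the real form of) $v_{-k}$, differentiate in $t$, and integrate by parts in the $t$-variable. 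The boundary terms vanish because $\mu^c(\Loop)$-integrability of $F,G$ and their derivatives is assumed on the dense domain, and the ``bulk'' term reorganizes into $(F,\mc L_{-k}G)_{\mu^c}$, with the two central/Schwarzian corrections matching because $c$ is real and the cocycle is symmetric under $k\leftrightarrow -k$.

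The main obstacle will be the analytic justification of differentiating under the integral sign against the infinite $\sigma$-finite measure $\mu^c$ and controlling the flow $\phi_t^k$ uniformly on the (non-compact) loop space $\Loop$: one must exhibit a dense domain of test functionals $F,G$ for which $t\mapsto F(\phi_t^k(\gamma))$ is differentiable with $\mu^c$-integrable derivative, and for which the Radon--Nikodym cocycle and its $t$-derivative are likewise integrable, so that dominated convergence applies. I expect this to be handled exactly as in the setup of Proposition~\ref{prop:Phi_example} and Corollary~\ref{cor:vir_gen}, using the explicit structure of the SLE loop measure near analytic curves and the infinitesimal conformal restriction property; once the differentiation is legitimate, the identity $\mc L_k^*=\mc L_{-k}$ is essentially forced by the $\tau$-conjugation $v_k\leftrightarrow v_{-k}$ together with the reality of $c$. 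A secondary point to check carefully is that $\mc L_k$ is genuinely closable with the claimed adjoint on all of $\m Z$, including $k=0$ (where $v_0=-z\partial_z$ generates dilations and the computation degenerates pleasantly) and the M\"obius directions $k=\pm 1$ (where the flow preserves more structure and the Schwarzian term drops).
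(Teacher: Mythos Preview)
Your proposal is correct and follows essentially the same route as the paper: the proof in the paper combines (i) the $\tau$-intertwining $L_k^{\m C}\circ\tau^* = -\tau^*\circ\overline{L_{-k}^{\m C}}$ (your $\tau_* v_k \sim v_{-k}$), (ii) the integration-by-parts/divergence formula coming from infinitesimal conformal restriction, and (iii) the identity $\tau^* P_{-k}=\overline{Q_{-k}}$ relating the Neretin polynomials on the two sides of the curve. One refinement: the cancellation of the correction terms is not simply ``the cocycle is symmetric under $k\leftrightarrow -k$''; rather, for $k\le 0$ one has $\mc L_k = L_k^{\m C}+\tfrac{c}{12}P_{-k}$ while $\mc L_{-k}=L_{-k}^{\m C}$ carries no multiplicative correction, and the $P_{-k}$ term, after pulling back by $\tau$, becomes $\overline{Q_{-k}}$ and is cancelled \emph{exactly} by the divergence $\Div_{\mu^c}(L_{-k}^{\m C})=\tfrac{c}{12}Q_{-k}$ produced in the integration by parts---so the matching is a consequence of the specific divergence computation (Proposition~\ref{prop:divergence}) together with $Q_k\circ\tau=\overline{P_k}$, not a generic symmetry.
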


One important property of $\mu^c$ is that, if we compare it with its pushforward under a conformal map, the Radon--Nikodym derivative has a precise form (see Lemma~\ref{lem:CR}).
The key ingredient of our construction of the Virasoro representation is the following result describing the infinitesimal conformal variation of the SLE loop measure. 
\begin{prop} [See Proposition~\ref{prop:divergence}] \label{prop:intro_divergence}
Let $\mu^{c}$ be the SLE$_\k$ loop measure of central charge $c$.  
    We have
    $$ \Div_{\mu^c} \left(L_k^{\m C}\right) =    \begin{cases}
     - \frac{c}{12}   P_{-k}, \quad &\text{for } k \le 0, \\
     \frac{c}{12}  Q_{k}, \quad & \text{for } k \ge 0.
    \end{cases} $$
    where $\Div_{\mu^c}$ is the divergence  (see Definition~\ref{df:divergence}) with respect to the measure $\mu^c$. Here,  $P_k$ and $Q_k$ for $k \ge 0$ are the Neretin polynomials associated with $f^{-1}$ and $g^{-1}$ (see Definition~\ref{df:neretin}) and $f$ (resp., $g$) is a conformal map from $\m D$ to the bounded connected component $\O$ (resp., the unbounded connected component $\O^*$) sending $0$ to $0$ (resp., to $\infty$).
\end{prop}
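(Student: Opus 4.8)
The plan is to compute the divergence of the vector field $L_k^{\m C}$ directly from the conformal restriction formula of Lemma~\ref{lem:CR}, which governs how $\mu^c$ transforms under a conformal map. Recall that the divergence $\Div_{\mu^c}(V)$ of a vector field $V$ generating a flow $(\Phi_t)$ is defined (Definition~\ref{df:divergence}) as the Radon--Nikodym logarithmic derivative $\frac{d}{dt}\big|_{t=0} \frac{d((\Phi_t)_*\mu^c)}{d\mu^c}$, so the computation reduces to expanding, to first order in $t$, the Radon--Nikodym derivative associated with the time-$t$ flow of $L_k^{\m C}$. The key geometric point is that the holomorphic vector field $v_k = -z^{k+1}\partial_z$ does \emph{not} preserve $\Loop$ globally, but its flow acts on a neighborhood of a given loop $\g$; one splits the flow into the part acting on the bounded component $\O$ (relevant when $k \le 0$, where $v_k$ extends holomorphically across $0$) and the part acting on the unbounded component $\O^*$ (relevant when $k \ge 0$, where $v_k$ extends holomorphically across $\infty$). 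For each sign of $k$, conjugating by the uniformizing map $f$ (resp.\ $g$) turns the variation of $\g$ into a variation of the identity, and the conformal restriction covariance supplies the infinitesimal Radon--Nikodym derivative as an explicit functional of $f$ (resp.\ $g$).

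The main steps, in order, are as follows. First I would recall the explicit form of the Radon--Nikodym derivative in Lemma~\ref{lem:CR}: under a conformal map $\psi$ defined near $\g$, the density of $\psi_*\mu^c$ with respect to $\mu^c$ is $\exp\!\big(\tfrac{c}{?}\,(\text{something involving the Schwarzian/Loewner-type functional of }\psi)\big)$ times a Jacobian-type factor; the precise shape is what the paper has already established. Second, I would take $\psi = \psi_t$ to be the time-$t$ flow of $v_k$ and differentiate at $t=0$. The leading-order term in $t$ of the exponent is a \emph{linear} functional of the infinitesimal generator, and because $v_k(z) = -z^{k+1}$ is polynomial, the resulting expression is a polynomial in the Taylor (resp.\ Laurent) coefficients of $f$ at $0$ (resp.\ $g$ at $\infty$): this is exactly the defining property of the Neretin polynomials $P_k, Q_k$ (Definition~\ref{df:neretin}). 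Third, I would match constants: the factor $c/12$ and the signs $-P_{-k}$ for $k \le 0$ versus $+Q_k$ for $k \ge 0$ come from (i) the central charge normalization in Lemma~\ref{lem:CR}, and (ii) the orientation reversal between the two components --- flowing $v_k$ "inward" on $\O$ versus "outward" on $\O^*$ accounts for the sign flip. The overlap case $k=0$ should be checked for consistency: $v_0 = -z\partial_z$ is the dilation field, which fixes every loop in $\Loop$ up to scaling, and both formulas must give the same answer there (presumably $P_0 = Q_0$ a constant, possibly $0$, consistent with $\mu^c$ being scale-covariant with a known exponent).

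I expect the main obstacle to be step two: carefully extracting the first-order term of the conformal-restriction Radon--Nikodym derivative and identifying it, without error in the combinatorial bookkeeping, with Neretin's polynomials. Two subtleties deserve attention. (a) The flow $\psi_t$ of $v_k$ distorts the loop $\g$ \emph{and} moves the marked points $0, \infty$ (for $k \neq 0, -1$ one of these is not even a fixed point of the field), so one must track how the normalization "$f(0)=0$, $g(\infty)=\infty$" is restored by post-composing with a Möbius map, and absorb the resulting correction; this is presumably where the distinction between $f^{-1}, g^{-1}$ in the statement and $f, g$ enters. (b) Because $\mu^c$ is only $\sigma$-finite, not a probability measure, one should be slightly careful that the divergence identity is interpreted in the weak/distributional sense against test functionals, i.e.\ via integration by parts $\int (L_k^{\m C} F)\, d\mu^c = -\int F\, \Div_{\mu^c}(L_k^{\m C})\, d\mu^c$, rather than pointwise --- but this is a matter of formulation, not a genuine analytic difficulty. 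Once the first-order expansion is in hand, the identification with $P_{-k}$ and $Q_k$ is a direct comparison with Definition~\ref{df:neretin}, and the rest is routine.
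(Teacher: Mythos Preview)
Your overall architecture---apply the conformal restriction formula of Lemma~\ref{lem:CR}, differentiate the Radon--Nikodym derivative along the flow of $v_k$, and read off the Neretin polynomials---is exactly the paper's strategy. But there is a genuine gap at your ``step two,'' and it is precisely the step you flag as the main obstacle without saying how to resolve it.

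The Radon--Nikodym derivative in Lemma~\ref{lem:CR} is \emph{not} expressed in terms of a Schwarzian of $\psi$; it is expressed in terms of the renormalized Brownian loop mass $\Lambda^*(\g, \Chat \setminus D)$. So after you plug in the flow $\varphi_{k,t}$ and differentiate at $t=0$, what you actually face is
\[
\frac{\dd}{\dd t}\Big|_{t=0} \Lambda^*\!\big(\varphi_{k,t}(\g),\, \Chat \setminus \varphi_{k,t}(D)\big),
\]
and there is no a priori reason this should produce a Schwarzian or a polynomial in the coefficients of $f^{-1}$. The paper supplies this missing link as a separate, nontrivial result (Proposition~\ref{prop:var_loop_general} and Corollary~\ref{cor:infinitesimal_BML}): the infinitesimal variation of $\Lambda^*$ along a holomorphic vector field $v$ is the residue of $v(z)\,\mc S[f^{-1}](z)$ (resp.\ $\mc S[g^{-1}]$), and it is only at this point that the Schwarzian---and hence the Neretin polynomials via the expansion $z^2 \mc S[f^{-1}] = \sum P_k z^k$---enters. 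That variational formula is imported from an external result for Weil--Petersson curves (Theorem~\ref{thm:WP}) and then extended by an approximation argument to arbitrary Jordan curves. Your proposal treats this as if it were bookkeeping, but it is the analytic heart of the proof.

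Two smaller points. First, your geometric description is reversed: for $k \le -2$ the field $v_k = -z^{k+1}\partial_z$ has a \emph{pole} at $0$, so its flow is conformal on $\Chat \setminus B(0,\delta)$, not across $0$; the Beltrami differential representing it is supported near $0$ inside $\O$, and that is why $\mc S[f^{-1}]$ (hence $P_{-k}$) appears for $k\le 0$. Second, your worry (a) about restoring the normalization $f(0)=0$, $g(\infty)=\infty$ after the flow is not needed: the paper works directly with the flow on domains of the form $B(0,R)$ or $\Chat\setminus B(0,\delta)$ and never renormalizes; the appearance of $f^{-1}, g^{-1}$ comes from the variational formula for $\Lambda^*$, not from a post-composition correction.
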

This result was also stated in \cite[Sec.\,2.5.2]{Kontsevich_SLE} by Kontsevich and Suhov but using the Neretin polynomials associated with $f$ and $g$ which are different from our $P_k$ and $Q_k$. 
We provide a detailed proof of this result in Section~\ref{sec:infini}. This result also implies the integration by parts formula for the SLE loop measure (Corollary~\ref{cor:IBP}).

Representations of the Virasoro algebra play a prominent role in the Liouville conformal field theory,
starting from the foundational work \cite{BELAVIN1984333}. Relations between SLE, CFT, and Virasoro algebra have also been studied in numerous works, such as \cite{bauer2002slekappa,bauer2003conformal,bauer2004conformal,BBK,Friedrich_Kalkkinen,Friedrich_Werner_03,kangMakarov,Peltola}. More recently, in \cite{Baverez_vir_LCFT}, the Hamiltonian of Liouville conformal field theory (LCFT) is diagonalized via the action of the Virasoro algebra. A complete characterization of the algebraic structure of LCFT has been achieved through the construction of highest-weight representations of the Virasoro algebra \cite{baverez2023irreducibility}.
The works \cite{Quantum_zipper,AHS-EJP914} showed that the SLE$_\kappa$ loop measure can be constructed by conformally welding two Liouville quantum gravity disks. Hence, it will be interesting to compare the Virasoro action we obtain with that in LCFT.
A recent paper \cite{maibach2024} connects the breaking of conformal symmetry in (euclidean) 2D CFT to a conformal anomaly as expressed by the conformal restriction property in planar random geometry. This leads to a nontrivial central extension of the classical conformal symmetry described as the Virasoro algebra.

While this is not the subject of this paper, we mention other constructions of measures on $\Diff(S^1)$ or the space of Jordan curves. In addition to the study by Airault, Malliavin, and Thalmaier in \cite{AiraultMalliavinThalmaier2010}, the Brownian motion on $\operatorname{Diff}\left( S^{1} \right)$ has been constructed and studied in \cite{AiraultMalliavin2000, AiraultMalliavin2001, AiraultMalliavinThalmaier2004, Fang2002,GordinaWu2008,  WuMang2011}. This direction was inspired by the earlier work by Kirillov in \cite{Kirillov1998a}, including Riemannian geometry of such infinite-dimensional spaces. While the unitarizing measures have not been constructed in the papers by Airault, Malliavin, Thalmaier, et al., there were a number of results relating such a measure to representations of the Virasoro algebra in  \cite{airault2003realization, AiraultMalliavin2001, AiraultMalliavinThalmaier2002, AiraultNeretin2008}, and reviewed in \cite{Lescot2007a}. This is the direction that is very relevant to our results and connects it to the influential work in representation theory by Kirillov--Neretin--Yuriev in \cite{KirillovNeretinBook1994, KirillovYuriev1988}. 
In \cite{TK}, Amaba and Yoshikawa also investigated stochastic Loewner--Kufarev evolution on the loop space and obtained an integration by parts formula that is reminiscent of our Proposition~\ref{prop:intro_divergence}. We also mention constructions of random circle homeomorphisms using the conformal welding of random surfaces \cite{AJKS,binder2023,KMS_welding} and the Malliavin--Shavgulidze measure, see, e.g., \cite{bauerschmidt2024sch,Malliavin_Malliavin,Shavgulidze97}.

\section{Preliminaries}

\subsection{Brownian loop measure}

The Brownian loop measure in $\Chat = \m C \cup \{\infty\}$ was first introduced by Lawler and Werner in \cite{LW2004loupsoup}. For a domain $D \subset \Chat$ and $A_1, A_2 \subset D$, we denote by $\mc B_D(A_1, A_2)$ the mass of Brownian loops that are contained in $D$ and intersect both $A_1$ and $A_2$. The outer boundary of a Brownian loop $\d$ (i.e., the boundary of the connected component of $\Chat \setminus \d$ containing $\infty$) induces a measure $\mc W$, introduced by Werner\cite{werner_measure},  which is supported on simple loops that have the local geometry of an $\SLE_{8/3}$ curve (hence also called the \emph{SLE$_{8/3}$ loop measure}).

Let $K_1, K_2$ be two disjoint compact subsets of $\Chat$. The total mass of the Brownian loop measure on $\Chat$ intersecting both $K_1$ and $K_2$ is infinite. There are two different renormalizations to obtain a finite mass of loops hitting $K_1$ and $K_2$. In \cite{MR3038676}, the renormalized Brownian loop measure is defined to be
\begin{align*}
\Lambda^\ast(K_1, K_2) := \lim_{r\to 0} \mc B_{\Chat \setminus B(z, r)} (K_1, K_2) - \log \log r^{-1},
\end{align*}
where $B(z,r)$ is the Euclidean ball centered at $z$ with radius $r$.
It was shown in \cite{MR3038676} that this definition does not depend on the choice of $z \in \Chat \setminus (K_1 \cup K_2)$.
An alternative way is to consider the mass $\mc W(K_1, K_2)$ of loops in $\Chat$ intersecting both $K_1$ and $K_2$ under the measure $\mc W$, which is proved to be finite in \cite{KemppainenWerner2016}.

\subsection{SLE loop measure}

The definition of SLE$_{8/3}$ loop measure can be extended to any Riemann surface by Werner's work \cite{werner_measure}.  For other values of $\k$, Benoist and Dub\'edat \cite{Benoist_loop} constructed the SLE$_2$ loop measure.
Later in \cite{zhan2020sleloop}, Zhan constructed the SLE$_\kappa$ loop measures for all $\kappa\in (0,8)$. For $\kappa\in(0,4]$, SLE$_\kappa$ loop measures are examples of \emph{Malliavin--Kontsevich--Suhov} loop measure with central charge $c (\kappa) = (6-\kappa)(3\kappa-8)/(2\kappa) \in (-\infty, 1]$ that we now formulate in terms of the conformal restriction property as follows.  

\begin{df}[Malliavin--Kontsevich--Suhov measure]\label{d.MKSmeasure} Let $c \in (-\infty, 1]$, we call a family of measures $(\mu^c_D)_{D \subset \Chat}$ on the set of \emph{simple} loops contained in a domain $D \subset \Chat$ a \emph{Malliavin--Kontsevich--Suhov measure} of central charge $c$ if it satisfies the following properties.

\begin{itemize}
    \item (RC) \textit{Restriction covariance property} : suppose $D \subset \Chat$, then the Radon--Nikodym derivative is given by 
\begin{equation}\label{eqn.conformal.restr}
\frac{\dd \mu_{D}^c}{ \dd \mu^c_{\Chat}} \left( \cdot \right) = \mathbbm{1}_{ \left\{ \cdot \, \subset D \right\} } \exp \left( \frac{c}{2}\, \Lambda^{\ast} \left( \cdot, \Chat \setminus D \right) \right).
\end{equation}

\item  (CI) \textit{Conformal invariance}: if $D$ and $D^{\prime}$ are conformally equivalent domains in the plane, the pushforward of $\mu^c_{D}$ via any conformal map from $D$ to $D^{\prime}$, is exactly  $\mu^c_{D^{\prime}}$.
\item ($\sigma$-finiteness):  For all $0 < r <1 $, let $\m A_r := \{z \in \m C \, |\, r<|z| < 1/r\}$ and
$$
\Loop^r : = \{\g \text { Jordan curve in } \m C \text{ separating } 0 \text { and } \infty \,|\, \g \subset \m A_r\},
$$
then we have 
$\mu^c_{\Chat} \{\Loop^r\} < \infty$.
\end{itemize}
\end{df}

This definition is a rewriting of the property of having a trivial section in the determinant line bundle on the space of simple loops as postulated in \cite{Kontsevich_SLE} by Kontsevich and Suhov while alluding to the works of Malliavin. 
 When $c = 0$, the uniqueness of the MKS measure up to a multiplicative constant was proved in \cite{werner_measure} (prior to \cite{Kontsevich_SLE}) and is given by the SLE$_{8/3}$ loop measure $\mc W$. For other values of $c$, the uniqueness is shown in the very recent work \cite{Baverez_Jego}.

The following lemma is a consequence of Definition~\ref{d.MKSmeasure}. 

\begin{lem}[Conformal restriction covariance]\label{lem:CR}
Let $\varphi : D \to D'$ be a conformal map between two domains in $\Chat$. Then
$$\frac{\dd  \varphi_*(\mu^c_{\Chat} \1_{\{\cdot \subset D\}})}{\dd \mu^c_{\Chat} \1_{\{\cdot \subset D'\}}} (\g) = \exp \left(\frac{c}{2} \left( \Lambda^* (\g, \Chat \setminus D')- \Lambda^*(\varphi^{-1} (\g), \Chat \setminus D) \right)\right),$$
where $\varphi_*\mu$ denotes the pushforward of the measure $\mu$ under $\varphi$.
\end{lem}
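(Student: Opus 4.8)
The plan is to derive this from the restriction covariance property (RC) and conformal invariance (CI) in Definition \ref{d.MKSmeasure}, essentially by a change-of-variables argument for pushforwards of measures with known Radon–Nikodym derivatives.

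First I would record how conformal invariance interacts with restriction. By (CI), the conformal map $\varphi : D \to D'$ pushes $\mu^c_D$ forward to $\mu^c_{D'}$. By (RC) applied to the domain $D$, the measure $\mu^c_D$ equals $\mathbbm{1}_{\{\cdot \subset D\}}\exp\left(\frac{c}{2}\Lambda^*(\cdot, \Chat\setminus D)\right)\, \dd\mu^c_{\Chat}$, which is exactly $\exp\left(\frac{c}{2}\Lambda^*(\cdot, \Chat\setminus D)\right)$ times $\mu^c_{\Chat}\mathbbm{1}_{\{\cdot \subset D\}}$. Hence
\[
\varphi_*\left(\exp\left(\tfrac{c}{2}\Lambda^*(\cdot, \Chat\setminus D)\right)\, \mu^c_{\Chat}\mathbbm{1}_{\{\cdot \subset D\}}\right) = \mu^c_{D'} = \exp\left(\tfrac{c}{2}\Lambda^*(\cdot, \Chat\setminus D')\right)\, \mu^c_{\Chat}\mathbbm{1}_{\{\cdot \subset D'\}},
\]
again using (RC), this time for $D'$.

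Next I would unwind the pushforward on the left-hand side. For a generic measurable weight $w$, one has $\varphi_*\bigl(w(\cdot)\,\nu\bigr) = w(\varphi^{-1}(\cdot))\,\varphi_*\nu$. Applying this with $w(\gamma) = \exp\left(\frac{c}{2}\Lambda^*(\gamma, \Chat\setminus D)\right)$ and $\nu = \mu^c_{\Chat}\mathbbm{1}_{\{\cdot \subset D\}}$, the displayed identity becomes
\[
\exp\left(\tfrac{c}{2}\Lambda^*(\varphi^{-1}(\gamma), \Chat\setminus D)\right)\, \dd\varphi_*\bigl(\mu^c_{\Chat}\mathbbm{1}_{\{\cdot \subset D\}}\bigr)(\gamma) = \exp\left(\tfrac{c}{2}\Lambda^*(\gamma, \Chat\setminus D')\right)\, \dd\bigl(\mu^c_{\Chat}\mathbbm{1}_{\{\cdot \subset D'\}}\bigr)(\gamma),
\]
and since the exponential weights are strictly positive and finite (so they can be divided out $\mu^c_{\Chat}$-a.e. on $\{\cdot \subset D'\}$, which is where $\varphi_*\bigl(\mu^c_{\Chat}\mathbbm{1}_{\{\cdot\subset D\}}\bigr)$ is supported because $\varphi$ maps loops in $D$ to loops in $D'$), dividing yields precisely the claimed Radon–Nikodym derivative. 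I would also note that the denominator measure $\mu^c_{\Chat}\mathbbm{1}_{\{\cdot \subset D'\}}$ indeed dominates the pushforward: any loop in the support of $\varphi_*(\mu^c_{\Chat}\mathbbm{1}_{\{\cdot\subset D\}})$ lies in $D'$, and the RN derivative relating $\mu^c_{D}$ and $\mu^c_{\Chat}$ is a genuine positive density, so absolute continuity is preserved under the pushforward.

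The only mild subtlety — and the step I would be most careful about — is the measurability and a.e.-finiteness of $\Lambda^*(\cdot, \Chat\setminus D)$ and $\Lambda^*(\cdot, \Chat\setminus D')$ as functions of the loop, so that the weights $w$ are legitimate densities and the division step is valid; this is implicitly guaranteed by the fact that (RC) is a well-posed statement, i.e. the right-hand side of \eqref{eqn.conformal.restr} is a bona fide Radon–Nikodym derivative. Once that is granted, the argument is a purely formal manipulation of densities under pushforward, with no further input needed.
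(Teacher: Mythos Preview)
Your proof is correct and follows essentially the same approach as the paper: both arguments use (RC) to relate $\mu^c_{\Chat}\mathbbm{1}_{\{\cdot\subset D\}}$ to $\mu^c_D$, push forward via (CI) to $\mu^c_{D'}$, and then use (RC) again for $D'$ to read off the density. The only cosmetic difference is that the paper inverts the density first (writing $\mu^c_{\Chat}\mathbbm{1}_{\{\cdot\subset D\}}$ as a weighted $\mu^c_D$) before pushing forward, whereas you push forward the weighted measure and then divide; the content is identical.
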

\begin{proof}
By \eqref{eqn.conformal.restr}, we have
\begin{align*}
    \dd\mu^c_{\Chat}\1_{\{\cdot \subset D\}}=\exp \left(- \frac{c}{2}\, \Lambda^{\ast} \left( \cdot, \Chat \setminus D \right) \right) \dd \mu^c_D(\cdot).
\end{align*}
Applying $\varphi_*$ on both sides, and noting that (CI) implies $\varphi_*(\mu^c_D)=\mu^c_{D'}$, we have
\begin{align}\label{eq:1}
\dd\varphi_*\left(\mu^c_{\Chat}\1_{\{\cdot \subset D\}}\right) =\exp \left(- \frac{c}{2}\, \Lambda^{\ast} \left( \varphi^{-1}(\cdot), \Chat \setminus D \right) \right) \dd \mu^c_{D'}(\cdot).
\end{align}
Again by \eqref{eqn.conformal.restr}, we have
\begin{align}\label{eq:2}
\dd \mu^c_{D'}(\cdot)=\dd\mu^c_{\Chat}\1_{\{\cdot \subset {D'}\}}\exp \left( \frac{c}{2}\, \Lambda^{\ast} \left( \cdot, \Chat \setminus D' \right) \right).
\end{align}
Combining \eqref{eq:1} and \eqref{eq:2} completes the proof.
\end{proof}

It is known that, although $\Lambda^* (K_1, K_2)$ is not equal to $\mc W(K_1, K_2)$ \cite[Lem.\,2.4]{carfagnini2023onsager}, we may still replace the difference of $\Lambda^*$ in Lemma~\ref{lem:CR} by that of $\mc W$ by the following theorem. 

\begin{thm}[See {\cite[Thm.\,2.5]{carfagnini2023onsager}} and the remark after it]\label{thm:Lambda_W_equal}
If $D$ and $D'$ are conformally equivalent simply connected or doubly connected open subsets of $\Chat$ and $\varphi: D \to D'$ is a conformal map. Then for all Jordan curves $\g \subset D'$, we have
$$ \Lambda^*(\g, \Chat \setminus D') -\Lambda^* (\varphi^{-1}(\g), \Chat \setminus D) =   \mc W(\g, \Chat \setminus D')-\mc W (\varphi^{-1}(\g), \Chat \setminus D).$$
\end{thm}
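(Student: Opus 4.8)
Set, for a Jordan curve $\gamma$ contained in a domain $D\subsetneq\Chat$,
\[
\mc D(D,\gamma):=\Lambda^*(\gamma,\Chat\setminus D)-\mc W(\gamma,\Chat\setminus D),
\]
which is finite by \cite[Lem.\,2.4]{carfagnini2023onsager}. A conformal map $\varphi:D\to D'$ takes the pair $(D,\varphi^{-1}(\gamma))$ to $(D',\gamma)$, so the theorem is exactly the conformal-invariance statement $\mc D(D',\gamma)=\mc D(D,\varphi^{-1}(\gamma))$. The plan is to prove this invariance, and the heart of the matter will be that the conformal anomaly of $\Lambda^*$ and that of the Werner-loop mass $\mc W$ agree for the ``loop versus exterior of a domain'' configuration.

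\textbf{Reduction via a reference domain.} Let $D_0$ be a standard representative of the conformal class of $D$ and $D'$ --- the disk $\m D$ in the simply connected case, a round annulus $\{\rho<|z|<1\}$ in the doubly connected case (here the hypothesis that $D$ and $D'$ are conformally equivalent fixes $\rho$). Fix conformal maps $\psi:D_0\to D$ and $\psi':D_0\to D'$, put $\alpha=(\psi')^{-1}(\gamma)\subset D_0$ and $m=(\psi')^{-1}\circ\varphi\circ\psi\in\mathrm{Aut}(D_0)$, so that $\varphi^{-1}(\gamma)=\psi(m^{-1}(\alpha))$ and the claim becomes
\[
\mc D\bigl(\psi'(D_0),\psi'(\alpha)\bigr)=\mc D\bigl(\psi(D_0),\psi(m^{-1}(\alpha))\bigr).
\]
Both sides have the form $\mc D(h(D_0),h(\beta))$ for a conformal map $h:D_0\to\Chat$ and a Jordan curve $\beta\subset D_0$. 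Hence it suffices to prove that $\mc D(h(D_0),h(\beta))$ does not depend on the choice of $h$ (only on $D_0$ and $\beta$): indeed this also gives invariance under $\beta\mapsto n(\beta)$ for $n\in\mathrm{Aut}(D_0)$, since $h\circ n$ is again a conformal map on $D_0$, and applying it with $h=\psi'$, $n=m^{-1}$ and then with $h=\psi$ yields the displayed equality. So everything reduces to the $h$-independence of $\mc D(h(D_0),h(\beta))$.

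\textbf{Proving the $h$-independence.} I would differentiate in $h$: an infinitesimal deformation of $h$ deforms the boundary curves $\partial h(D_0)$ and $h(\beta)$ by a vector field, and there are conformal-variation formulas for the two terms of $\mc D$. The variation of $\Lambda^*(\gamma,\Chat\setminus D)$ under such a deformation is controlled by the Brownian-loop conformal anomaly; running the identical computation with the Werner-loop mass in place of the Brownian-loop mass produces the variation of $\mc W(\gamma,\Chat\setminus D)$. The assertion is that the two variations coincide, i.e.\ that $\mc D$ is annihilated by every such deformation. At the level of the loops this is plausible because the discrepancy between ``a Brownian loop meets the curve'' and ``its outer boundary meets the curve'' is a local phenomenon near the curve: a loop much larger than the portion of the curve it explores meets the curve on its outer boundary and contributes nothing to $\mc D$, so $\mc D$ only sees the curve through the conformal structure of a neighborhood of it, which $h$ transports faithfully. (Alternatively, one may try to express $\Lambda^*$ and $\mc W$ directly through Brownian excursion/bubble quantities and check that the $h$-dependence cancels.)

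\textbf{Regularization and the main obstacle.} All the combinations above are formally of type $\infty-\infty$ --- the mass of loops merely meeting a curve is infinite, and $\Lambda^*$ and $\mc W$ carry different renormalizations, point removal for the former and intrinsic finiteness for the latter --- so one must work throughout with a common regularization (a diameter cutoff together with the point-removal cutoff) and remove it only at the end; it is precisely the finiteness statement \cite[Lem.\,2.4]{carfagnini2023onsager} that guarantees the limits survive the cancellations. The main obstacle is the $h$-independence itself, equivalently the identity of the two conformal anomalies, equivalently the claim that the ``fjord'' discrepancy between a Brownian loop and its outer boundary contributes a conformally invariant mass near the curve; the reference-domain reduction and the bookkeeping of the regularizations are comparatively routine.
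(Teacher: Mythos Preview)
The paper does not prove this statement; it is imported from \cite[Thm.\,2.5]{carfagnini2023onsager} (and the remark following it) and is used as a black box. So there is no ``paper's own proof'' to compare against, and your attempt must stand on its own.

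Your reduction is sound: writing $\mc D(D,\gamma)=\Lambda^*(\gamma,\Chat\setminus D)-\mc W(\gamma,\Chat\setminus D)$ and recasting the theorem as conformal invariance of $\mc D$ is exactly right, and the reference-domain bookkeeping that reduces everything to the $h$-independence of $\mc D(h(D_0),h(\beta))$ is correct and clean.

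The genuine gap is that you never prove that $h$-independence. You propose to differentiate in $h$ and argue that the infinitesimal conformal anomalies of $\Lambda^*$ and of $\mc W$ coincide, but the argument you offer for that coincidence is heuristic (``the discrepancy\ldots is a local phenomenon near the curve'', ``this is plausible because\ldots''), and you explicitly flag it yourself as ``the main obstacle''. This is the entire content of the theorem: the reduction you carried out is essentially a rephrasing, and without an actual proof that the two anomalies agree, nothing has been established. Moreover, the variational approach you sketch is somewhat circular in the context of the present paper, since Proposition~\ref{prop:var_loop_general} (the variation formula for $\Lambda^*$) is itself obtained \emph{via} Theorem~\ref{thm:Lambda_W_equal}; one would need an independent derivation of the $\Lambda^*$-variation to avoid using the result you are trying to prove. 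The actual argument in \cite{carfagnini2023onsager} proceeds differently, expressing both quantities through Brownian loop/bubble decompositions and tracking the renormalizations directly rather than via infinitesimal variation.
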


\begin{df}\label{df:loop}
We denote by 
$$\Loop := \cup_{0<r<1} \Loop^r = \{\text{Jordan curves in } \m C \text { separating } 0 \text{ and } \infty\}.$$
For each $\g\in\Loop$, let $\O$ and $\O^{\ast}$ be the connected components of $\Chat \setminus \g$ containing $0$ and $\infty$ respectively. Let $f$ be the unique conformal map $\m D \to \O$ with $f(0)=0$ and $f'(0)>0$. 
We say $f$ is the \emph{normalized conformal map} associated with $\g \in \Loop$.

We can then identify $\Loop$ with the space of conformal maps $f: \m D \to \O$ with $f(0)=0$ and $f'(0)>0$, and further endow $\Loop$ with the Carath\'eodory topology, which is the topology of uniform convergence on this space of conformal maps. See also Remark~\ref{rem:topology}.
We define
\begin{equation}
\mu^c := \mu^c_{\Chat} \mathbbm{1}_{\Loop}.
\end{equation}
\end{df}

In the sequel, we will only consider $\mu^c$ to be the measure induced from the SLE$_\k$ loop measure. (Given the claim in \cite{Baverez_Jego} about the uniqueness of $\mu^c$, it coincides with the SLE$_\k$ loop measure up to a multiplicative constant.)

\subsection{Involution on the space of loops}\label{s.Involution}

\begin{df}
We denote by $\tau$ the map $z \mapsto 1/\bar z$, which induces a continuous involution on the spaces $\Loop$ and $\Loop^r$ for all $ 0 < r <1$. 
\end{df}

\begin{lem}\label{lem:tau}
For $\kappa\in (0,4]$, the SLE$_\kappa$ loop measure $\mu^c$ on $\Loop$ is invariant under $\tau$.
\end{lem}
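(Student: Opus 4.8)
The plan is to deduce $\tau$-invariance from the defining properties (RC), (CI), and $\sigma$-finiteness of the Malliavin--Kontsevich--Suhov measure, using crucially that $\mu^c$ is, up to a constant, the \emph{unique} such measure --- or, alternatively, to argue directly from the conformal-restriction formula. The cleanest route is the following. First I would observe that $\tau: z \mapsto 1/\bar z$ is an \emph{anticonformal} involution of $\Chat$ fixing $S^1$ and swapping $0$ with $\infty$. Although $\tau$ is not holomorphic, it is the composition of the inversion $z \mapsto 1/z$ (a Möbius map, hence conformal) with complex conjugation $z \mapsto \bar z$. So $\tau$ preserves the class of Jordan curves separating $0$ and $\infty$, i.e.\ it maps $\Loop$ to $\Loop$ (and $\Loop^r$ to $\Loop^r$) bijectively and continuously, as already asserted in the preceding definition.

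The main step is to check that the pushforward $\tau_* \mu^c_{\Chat}$ again satisfies (RC), (CI), and $\sigma$-finiteness with the same central charge $c$. For $\sigma$-finiteness this is immediate since $\tau(\Loop^r) = \Loop^r$. For conformal invariance and restriction covariance, the key input is that the renormalized Brownian loop measure $\Lambda^*$ — and likewise Werner's measure $\mc W$ and the quantity $\mc W(K_1,K_2)$ — are invariant under \emph{all} conformal \emph{and} anticonformal automorphisms of $\Chat$; indeed the Brownian loop measure itself is conformally invariant and manifestly invariant under complex conjugation (Brownian motion is invariant in law under $z \mapsto \bar z$), so $\mc B_{\Chat \setminus B(z,r)}(K_1,K_2) = \mc B_{\Chat \setminus B(\tau(z), r')}(\tau(K_1), \tau(K_2))$ up to the harmless distortion of the removed ball, and the $\log\log$ renormalization is unaffected; hence $\Lambda^*(\tau(K_1), \tau(K_2)) = \Lambda^*(K_1, K_2)$. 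Given this, one runs the computation of Lemma~\ref{lem:CR} with the anticonformal map $\tau$ in place of $\varphi$: since $\tau(\Chat \setminus D) = \Chat \setminus \tau(D)$ and $\Lambda^*$ is $\tau$-invariant, the Radon--Nikodym derivative of $\tau_*(\mu^c_{\Chat}\1_{\{\cdot \subset D\}})$ against $\mu^c_{\Chat}\1_{\{\cdot \subset \tau(D)\}}$ is again $\exp(\frac{c}{2}(\Lambda^*(\g, \Chat\setminus\tau(D)) - \Lambda^*(\tau^{-1}(\g), \Chat \setminus D)))$, which is exactly the form dictated by (RC) and (CI). Thus $(\tau_*\mu^c_D)_{D}$ is a Malliavin--Kontsevich--Suhov family of central charge $c$.

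By the uniqueness statement for MKS measures of a given central charge (\cite{werner_measure} for $c=0$, \cite{Baverez_Jego} in general, as recalled after Definition~\ref{df:loop}), there is a constant $a = a(c) > 0$ with $\tau_* \mu^c_{\Chat} = a\, \mu^c_{\Chat}$, hence $\tau_*\mu^c = a\,\mu^c$ on $\Loop$. Applying $\tau_*$ once more and using $\tau^2 = \mathrm{id}$ gives $\mu^c = a^2 \mu^c$, so $a = 1$ and $\tau_*\mu^c = \mu^c$, as desired. I expect the main (and only genuine) obstacle to be the verification that the $\Lambda^*$-renormalization really is insensitive to the anticonformal nature of $\tau$ and to the slight non-roundness of $\tau(B(z,r))$; once that is in hand, the argument is purely formal. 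An alternative that avoids invoking the uniqueness theorem: the SLE$_\kappa$ loop measure is, by Zhan's construction \cite{zhan2020sleloop}, built from chordal SLE$_\kappa$ whose law is invariant under anticonformal maps (again because SLE$_\kappa$ is conjugation-invariant in law), so one may instead check $\tau$-invariance directly at the level of Zhan's construction; but the uniqueness route is shorter.
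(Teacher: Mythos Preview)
Your primary route (show that $\tau_*\mu^c_{\Chat}$ is again an MKS measure of central charge $c$, then invoke uniqueness and $\tau^2=\mathrm{id}$ to pin down the constant) is correct, but it is genuinely different from what the paper does. The paper takes precisely what you call the ``alternative'': it works directly with Zhan's construction
\[
\mu^c_{\Chat}=\mathrm{Cont}(\cdot)^{-2}\int_{\m C}\int_{\m C}\nu^\#_{z\rightleftharpoons w}\, G_{\m C}(w-z)\,\dd w\,\dd z,
\]
observes that Minkowski content and the Green's function are conjugation-invariant, and then checks that the two-sided whole-plane SLE$_\kappa$ law $\nu^\#_{z\rightleftharpoons w}$ is conjugation-invariant by looking at the driving functions (the whole-plane SLE$_\kappa(2)$ driving process $\lambda_t$ has the same law as $-\lambda_t$, and likewise for chordal SLE$_\kappa$). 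Since $\tau$ is inversion composed with conjugation, and inversion is already covered by (CI), this finishes the proof.

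The trade-off is clear. Your argument is conceptually cleaner and model-independent: it would apply to \emph{any} MKS measure, not just Zhan's realization. But it leans on the Baverez--Jego uniqueness theorem, which is a substantial and independently developed result that the paper cites but deliberately does not build on for its core lemmas (note the closing paragraph of the paper). The paper's route is more self-contained, needing only elementary facts about SLE driving functions, at the cost of being tied to one specific construction. One minor point on your side: the claim that the $\log\log r^{-1}$ renormalization in $\Lambda^*$ is insensitive to the non-roundness of $\tau(B(z,r))$ is true (because $\log\log (cr)^{-1}-\log\log r^{-1}\to 0$ and one can sandwich $\tau(B(z,r))$ between round balls of comparable radii), but it deserves an explicit sentence rather than being flagged as ``the only genuine obstacle'' and then left to the reader.
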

\begin{proof}
As the SLE$_\kappa$ loop measure is invariant under conformal maps, we only need to show that it is also invariant under the complex conjugation $z\mapsto \bar z$.

In \cite[Thm.\,4.2]{zhan2020sleloop}, Zhan constructed the SLE$_\kappa$ loop measure $\mu^c_{\Chat}$ by integrating a two-sided whole-plane SLE$_\kappa$ measure $\nu^\#_{z\rightleftharpoons w}$ between two points $z$ and $w$ in $\Chat$:
\begin{align*}
\mu^c_{\Chat}=\mathrm{Cont}(\cdot)^{-2}\int_\Cb\int_\Cb\nu^\#_{z\rightleftharpoons w} G_\Cb(w-z) dw dz,
\end{align*}
where $\mathrm{Cont}(\gamma)$ stands for the $1+\kappa/8$-dimensional Minkowski content of a curve $\gamma$, and $G_\Cb(z)=|z|^{-2(1-\kappa/8)}$ is Green's function. Since the Minkowski content and Green's function are invariant under conjugation, it suffices to show that the image of $\nu^\#_{z\rightleftharpoons w}$ under conjugation is equal to $\nu^\#_{\bar z\rightleftharpoons \bar w}$.

Recall that $\nu^\#_{z\rightleftharpoons w}$ is constructed by first running a whole-plane SLE$_\kappa(2)$ $\nu^\#_{z\to w}$ from $z$ to $w$ and then running a chordal SLE$_\kappa$ curve from $z$ to $w$ in the remaining domain. By conformal invariance of SLEs, it suffices to consider the case $z=0$ and $w=\infty$.
A whole-plane Loewner chain $(K_t)_{-\infty<t<\infty}$ growing from $0$ to $\infty$ is generated by the Loewner equation
\begin{align}
\partial_t g_t(z)=g_t(z)\frac{e^{-\ii \lambda_t}+ g_t(z)}{e^{-\ii\lambda_t}-g_t(z)}, \quad \lim_{t\to-\infty} e^t g_t(z)=z,
\end{align}
where $\lambda: (-\infty, \infty)\to \Rb$ is the driving function, and $g_t$ is the conformal map from $\Chat \setminus K_t$ to $\Chat \setminus \m D$ that sends $\infty$ to $\infty$ with $g_t'(\infty)=\lim_{z\to \infty} z/g_t(z)>0$. The driving function $\lambda_t$ of $\nu^\#_{0\to \infty}$, as given by \cite[Eq.\,(3.2),(3.3)]{zhan2020sleloop}, is a random process whose law is the same as $-\lambda_t$. This implies that the law of $\nu^\#_{0\to \infty}$ is invariant under $z\mapsto \bar z$.  
Similarly, by the chordal Loewner equation and noting that the driving function $W_t$ of a chordal SLE$_\kappa$ in $\m{H}$ has the same law as $-W_t$, we know that a chordal SLE$_\kappa$ in $\m{H}$ from $0$ to $\infty$ is invariant under $z\mapsto - \bar z$.

Let $\gamma_1$ be a curve with law $\nu^\#_{0\to \infty}$ and $\gamma_2$ be a chordal SLE$_\kappa$ in $\Chat\setminus \gamma_1$ from $0$ to $\infty$. Note that
if $f$ is a conformal map from $\Chat\setminus \gamma_1$ onto $\m H$ that leaves $0, \infty$ fixed, then $z\mapsto - \overline{f(\bar z)}$ is a conformal map from $\Chat\setminus \overline{\gamma_1}$ onto $\m H$ that leaves $0, \infty$ fixed. Combined with the previous paragraph, we can conclude that $(\overline{\gamma_1}, \overline{ \gamma_2})$ has the same distribution as $(\gamma_1, \gamma_2)$, which completes the proof.
\end{proof}

\begin{cor}\label{cor:tau_inv_r}
   For all $ 0 < r <1 $, the measure $\mu^c$ restricted to $\Loop^r$ is also invariant under $\tau$.
\end{cor}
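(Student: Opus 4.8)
The plan is to deduce this immediately from Lemma~\ref{lem:tau} together with the fact that $\Loop^r$ is itself stable under $\tau$. First I would record the elementary geometric observation underlying the latter: the anti-holomorphic map $\tau : z \mapsto 1/\bar z$ satisfies $|\tau(z)| = 1/|z|$, so it maps the annulus $\m A_r = \{r < |z| < 1/r\}$ bijectively onto itself; moreover $\tau$ is a homeomorphism of $\Chat$ swapping $0$ and $\infty$, hence it sends a Jordan curve contained in $\m A_r$ and separating $0$ and $\infty$ to another such curve. Consequently $\tau(\Loop^r) = \Loop^r$. (This is precisely the assertion, already made in the definition of $\tau$, that $\tau$ induces a continuous involution on each $\Loop^r$.)

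Given this, the corollary becomes a purely measure-theoretic statement: the restriction of a $\tau$-invariant measure to a $\tau$-stable subset is still $\tau$-invariant. Concretely, for a Borel set $A \subset \Loop^r$ one has $\tau^{-1}(A) = \tau(A) \subset \Loop^r$, since $\tau$ is an involution preserving $\Loop^r$; therefore
$$\tau_*\bigl(\mu^c \1_{\Loop^r}\bigr)(A) = \mu^c\bigl(\tau^{-1}(A) \cap \Loop^r\bigr) = \mu^c\bigl(\tau^{-1}(A)\bigr) = \tau_*\mu^c(A) = \mu^c(A) = \bigl(\mu^c \1_{\Loop^r}\bigr)(A),$$
where the second equality uses $\tau^{-1}(A) \subset \Loop^r$, the fourth uses the $\tau$-invariance of $\mu^c$ on $\Loop$ provided by Lemma~\ref{lem:tau}, and the last uses $A \subset \Loop^r$.

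There is no genuine obstacle here; the only point worth a word is that $\tau$ restricted to $\Loop^r$ is a homeomorphism (already noted when $\tau$ is introduced), so that the pushforward $\tau_*(\mu^c \1_{\Loop^r})$ is well defined and the measurability manipulations above are legitimate.
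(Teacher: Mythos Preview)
Your argument is correct and is exactly the reasoning the paper has in mind: the corollary is stated without proof immediately after Lemma~\ref{lem:tau}, relying on the already-noted fact that $\tau$ is an involution of each $\Loop^r$, so invariance of $\mu^c$ on $\Loop$ restricts to invariance on $\Loop^r$.
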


\subsection{Function spaces}

We consider the following bilinear form.  For Borel measurable functions $F, G : \Loop \to \m C$, let
\begin{equation}\label{eq:bilinear}
(F, G)_{\mu^c} :=  \int_{\Loop} \overline{F (\g)} \,\tau^* G(\g) \,\dd \mu^c (\g) 
\end{equation}
whenever the integral on the right-hand side exists. Here $\tau^* G (\g): = G \circ \tau (\g)$ and $\tau$ is the involution on $\Loop$ introduced in Section~\ref{s.Involution}.

The involution $\tau$ induces the following polarization on functions: 
\begin{align*}
&  \mc F^+ := \{F: \Loop \longrightarrow \mathbb{C}\, | \, F = \tau^* F\},
\\
& \mc F^- := \{F: \Loop \longrightarrow \mathbb{C}\, | \, F = - \tau^* F\}. 
\end{align*}
For any function $F: \Loop \longrightarrow \mathbb{C}$,  we can decompose $F=F_{+}+F_{-}$, where $F_+= (F+ \tau^* F)/2$ and $F_-=(F-\tau^* F)/2$, so that $F_{+} \in  \mc F^+$, and $F_{-} \in  \mc F^-$.

\begin{df} \label{df_L_2}
We define $
L_{\tau}^{2}\left( \Loop,  \mu^{c}\right)$ as the space of measurable functions $F:\Loop \longrightarrow \mathbb{C}$ such that $(F_{+}, F_{+})_{\mu^c}< \infty$ and $-(F_{-}, F_{-})_{\mu^c}< \infty$, and then
\[
\Vert F \Vert_{L_{\tau}^{2}}^{2}:=(F_{+}, F_{+})_{\mu^c}+(F_{-}, F_{-})_{\mu^c}.
\]
By $L^2(\Loop, \mu^c) := \{F : \Loop \to \m C \,|\, \int_{\Loop} |F|^2 \,\dd \mu^c < \infty\}$ we denote the standard $L^2$-space.
\end{df}

\begin{prop}\label{p.BilinearForm}
The bilinear form $(\cdot, \cdot)_{\mu^c}$ is Hermitian and non-degenerate.  Moreover, $L^2(\Loop, \mu^c) = L^2_\tau(\Loop, \mu^c)$ as  sets.
\end{prop}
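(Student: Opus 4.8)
The plan is to establish the three assertions of Proposition~\ref{p.BilinearForm} separately, with the bulk of the work going into non-degeneracy.

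\textbf{Hermitian symmetry.} First I would check that $(F,G)_{\mu^c} = \overline{(G,F)_{\mu^c}}$ for $F,G$ in the stated class. Expanding, $\overline{(G,F)_{\mu^c}} = \int_\Loop G(\g)\,\overline{\tau^*F(\g)}\,\dd\mu^c(\g)$; applying the change of variables $\g \mapsto \tau(\g)$ and using that $\mu^c$ is $\tau$-invariant (Lemma~\ref{lem:tau}, together with $\tau\circ\tau = \mathrm{id}$) turns this into $\int_\Loop G(\tau(\g))\,\overline{F(\g)}\,\dd\mu^c(\g) = (F,G)_{\mu^c}$. This is the routine part; the only thing to be careful about is that the integrals defining both sides exist, which is guaranteed once we restrict to $L^2_\tau(\Loop,\mu^c)$ (and is what the polarization decomposition $F = F_+ + F_-$ is set up to control, since on $\mc F^+$ the form is a genuine $L^2$ inner product and on $\mc F^-$ it is the negative of one).

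\textbf{Equality of the two $L^2$ spaces.} For the set-theoretic identity $L^2(\Loop,\mu^c) = L^2_\tau(\Loop,\mu^c)$, I would use that $\tau$ is a $\mu^c$-preserving involution, so $\|\tau^*F\|_{L^2(\Loop,\mu^c)} = \|F\|_{L^2(\Loop,\mu^c)}$; hence $F \in L^2(\Loop,\mu^c)$ iff both $F_+ = (F+\tau^*F)/2$ and $F_- = (F-\tau^*F)/2$ lie in $L^2(\Loop,\mu^c)$, which is exactly the defining condition for $L^2_\tau$ (noting $(F_+,F_+)_{\mu^c} = \|F_+\|_{L^2}^2 \ge 0$ and $-(F_-,F_-)_{\mu^c} = \|F_-\|_{L^2}^2 \ge 0$ because $\tau^*F_\pm = \pm F_\pm$). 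The triangle inequality in both directions gives the equivalence of the norms as well, though only the equality of sets is claimed.

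\textbf{Non-degeneracy.} This I expect to be the crux. I must show that if $F \in L^2_\tau(\Loop,\mu^c)$ satisfies $(F,G)_{\mu^c} = 0$ for all $G \in L^2_\tau(\Loop,\mu^c)$, then $F = 0$ $\mu^c$-a.e. Decompose $F = F_+ + F_-$. Testing against $G \in \mc F^+$: since $(F_-,G)_{\mu^c} = \int \overline{F_-}\,G\,\dd\mu^c$ with $\tau^*F_- = -F_-$ and $\tau^*G = G$, the change of variables $\g\mapsto\tau(\g)$ shows $(F_-,G)_{\mu^c} = -(F_-,G)_{\mu^c}$, hence $(F_-,G)_{\mu^c}=0$ automatically; so $(F,G)_{\mu^c} = (F_+,G)_{\mu^c} = \int_\Loop \overline{F_+}\,G\,\dd\mu^c$. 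Taking $G = F_+ \in \mc F^+ \cap L^2(\Loop,\mu^c)$ forces $\|F_+\|_{L^2(\Loop,\mu^c)}^2 = 0$, so $F_+ = 0$ a.e. Symmetrically, testing against $G \in \mc F^-$ kills the $F_+$-contribution and leaves $(F,G)_{\mu^c} = (F_-,G)_{\mu^c} = -\int_\Loop \overline{F_-}\,\tau^*G\,\dd\mu^c$ wait --- more carefully, for $G \in \mc F^-$ one gets $(F_-,G)_{\mu^c} = \int \overline{F_-}\,G\,\dd\mu^c$ with both factors anti-invariant, and choosing $G = F_-$ gives $\int_\Loop |F_-|^2\,\dd\mu^c = 0$, so $F_- = 0$ a.e. Hence $F = 0$ a.e. The one genuine subtlety is that $F_+$ (resp. $F_-$) is an admissible test function, i.e. lies in $L^2_\tau(\Loop,\mu^c)$: this holds because $(F_+)_+ = F_+$, $(F_+)_- = 0$, and $F \in L^2_\tau$ already guarantees $(F_+,F_+)_{\mu^c} < \infty$. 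So the main obstacle is purely bookkeeping: keeping track of which cross terms vanish by $\tau$-invariance and confirming all test functions used sit in the correct space; there is no deep analytic input beyond Lemma~\ref{lem:tau}.
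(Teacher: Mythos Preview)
Your proposal is correct and follows essentially the same approach as the paper: Hermitian symmetry via the change of variables $\g\mapsto\tau(\g)$ and Lemma~\ref{lem:tau}; equality of the two $L^2$ spaces from $\tau$-invariance of $\mu^c$; and non-degeneracy from the orthogonal decomposition $F=F_++F_-$, on whose pieces the form is respectively positive and negative definite. There is a harmless sign slip in your $\mc F^-$ computation (for $G\in\mc F^-$ one has $(F_-,G)_{\mu^c}=-\int\overline{F_-}\,G\,\dd\mu^c$, not $+\int$), but your conclusion $\int|F_-|^2\,\dd\mu^c=0$ is unaffected.
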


\begin{proof}

Since $\mu$ is invariant under $\tau$ by Lemma~\ref{lem:tau}, we have
$$(F,G)_{\mu^c} = \int_{\Loop} \overline F \, \tau^* G \, \dd \mu^c = \int_{\Loop} \overline{\tau^* F} \, G \,\dd \tau^*\mu = \overline{\int_{\Loop} \tau^* F \, \overline G \,\dd \mu^c } = \overline{(G,F)_{\mu^c}}. $$
This shows that $(\cdot,\cdot)_{\mu^c}$ is Hermitian.

It is clear that the bilinear form \eqref{eq:bilinear} is positive definite on $\mc F^+ \times \mc F^+$ and negative definite on $\mc F^- \times \mc F^-$. Moreover, we have
\begin{align*}
& (F_{+}, F_{-})_{\mu^c} = \int_{\Loop} \overline {F_{+}} \, \tau^* F_{-} \, \dd \mu^c = -\int_{\Loop} \overline {F_{+}} \, F_{-} \,\dd \mu^{c}
\\
= & \overline{(F_{-}, F_{+})_{\mu^c}} = \overline{\int_{\Loop} \overline {F_{-}} \, \tau^* F_{+} \,\dd \mu^c } 
 = 
\overline{\int_{\Loop}  \overline {F_{-}} \, F_{+} \, \dd \mu^c } =
\int_{\Loop}  \overline{F_{+}} \,  F_{-} \,\dd \mu^c,
\end{align*}
therefore $ (F_{+}, F_{-})_{\mu^c} =0$. This shows that $(\cdot,\cdot)_{\mu^c}$ is non-degenerate.

The inclusion $L^2_\tau(\Loop, \mu^c) \subset L^2(\Loop, \mu^c)$ is clear since for all $F \in L^2_\tau$, $F = F_+ + F_-$ with $F_+, F_- \in L^2$ by assumption. Conversely, if $F \in L^2$, then Lemma~\ref{lem:tau} shows that $\tau^* F \in L^2$. This implies $F \in L^2_\tau$.
\end{proof}

We equip $\Loop$ with a coordinate system as in \cite[Sec.\,5]{ChavezPickrell2014}. 
Let $f$ be the normalized conformal map in Definition~\ref{df:loop}, then we have the following expansion 
\begin{align*}
f(z)= \ee^{\rho_0} \, z\left(1+ \sum_{n\ge 1} u_n z^n\right). 
\end{align*}
The map $\g \mapsto (\rho_0, u_1, u_2, \ldots )$ defines a coordinate system on $\Loop$. Recall that the quantity $\rho_0 =\rho_0(\g)$ is also called the \emph{log conformal radius} of $\g$.

\begin{df}[Cylinder functions]\label{df.cylinder_fun}
Let $\mathcal{C}^{\infty}_{c}$ be the space of \emph{compactly supported smooth cylinder functions} on $\Loop$ in the coordinates introduced above, namely, there exists $k \ge 1$ such that
\[
F=h\left( \rho_0 \right)e\left( u_{1}, \ldots, u_{k}, \overline{u_{1}}, \ldots, \overline{u_{k}} \right),
\]
where $h \in C_{c}^{\infty}\left(  \mathbb{R}
\right)$ and $e \in C_{c}^{\infty}\left( \mathbb{R}^{2k} \right)$  are compactly supported smooth functions. 

We also write $\mc C^\infty$ for the space of \emph{smooth cylinder functions} where we only assume that $h$ and $e$ are smooth. 
\end{df}

\begin{remark}[Topology]\label{rem:topology}
    The product topology induced by the coordinate system above coincides with the Carath\'eodory topology, namely, uniform convergence on finitely many coordinates is equivalent to the uniform convergence of the normalized conformal map $f$ on all compact subsets of $\m D$. This can be seen from the Bieberbach--de Branges theorem \cite{deBranges} (which shows that $|u_n| \le n+1$ and controls the tail of the power series). Conversely, uniform convergence of $f$ on compact implies convergence of $\rho_0$ and $u_k$ by the Cauchy integral formula.

    We also remark that as $|u_n| \le n+1$ implies that if $e \in C^\infty (\m R^{2k})$ is induced from a function on $\Loop$, then $e$ is automatically in $C_c^\infty (\m R^{2k})$.
\end{remark}

\begin{lem}[Compactness]\label{lem:compact}
    Let $-\infty < a \le b < \infty$. The closure $F_{a,b}$ of the family of normalized conformal maps $f$ for loops $\{\rho_0 \in [a,b]\} \subset \Loop$ in the space of univalent functions is compact for the Carath\'eodory topology.
\end{lem}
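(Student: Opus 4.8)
The plan is a normal-families argument. The relevant family is
$\mathcal G := \{\,f : f \text{ is the normalized conformal map of some } \gamma \in \Loop \text{ with } \rho_0(\gamma) \in [a,b]\,\}$,
and since $f'(0) = e^{\rho_0}$, the hypothesis $-\infty < a \le b < \infty$ says precisely that $f'(0) \in [e^a, e^b]$, i.e. the derivative at the origin is bounded away from both $0$ and $\infty$. These two bounds are exactly what makes the argument run: the upper bound yields local uniform boundedness of $\mathcal G$, and the lower bound prevents a limit from degenerating to a constant.

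First I would show that $\mathcal G$ is uniformly bounded on compact subsets of $\m D$. Writing $f(z) = e^{\rho_0} z(1 + \sum_{n \ge 1} u_n z^n)$ and using the coefficient bound $|u_n| \le n+1$ from the Bieberbach--de Branges theorem (recalled in Remark~\ref{rem:topology}), or equivalently the Koebe distortion theorem, together with $e^{\rho_0} \le e^b$, one obtains $|f(z)| \le e^{b}\,|z|/(1-|z|)^2$ for all $z \in \m D$. Hence, by Montel's theorem, the closure $\overline{\mathcal G}$ of $\mathcal G$ in the space $\mathrm{Hol}(\m D)$ of holomorphic functions with the topology of locally uniform convergence is compact. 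By Definition~\ref{df:loop} and Remark~\ref{rem:topology}, on normalized univalent maps this topology agrees with the Carath\'eodory topology.

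Next I would check that every limit point of $\mathcal G$ is again a normalized univalent map, so that $\overline{\mathcal G}$ coincides with the closure $F_{a,b}$ taken inside the space of univalent functions. Let $f = \lim_n f_n$ locally uniformly with $f_n \in \mathcal G$. Differentiating via the Cauchy integral formula, $f_n' \to f'$ locally uniformly, so $f'(0) = \lim_n f_n'(0) \in [e^a, e^b]$; in particular $f'(0) \ge e^a > 0$, so $f$ is non-constant, and Hurwitz's theorem then forces $f$ to be univalent (a locally uniform limit of univalent functions on a domain is univalent or constant). Since also $f(0) = \lim_n f_n(0) = 0$, the function $f$ is a normalized univalent map with $\log f'(0) \in [a,b]$. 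Therefore $\overline{\mathcal G} = F_{a,b}$, and $F_{a,b}$ is compact for the Carath\'eodory topology. The only delicate point is the application of Hurwitz, i.e. excluding the constant limit, which is exactly where the finiteness of $a$ is used (and the finiteness of $b$ in the boundedness step); everything else is routine.
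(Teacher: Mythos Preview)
Your argument is correct. The paper's proof, however, takes a different and shorter route: it observes that $F_{0,0}$ is exactly the class of Schlicht functions (univalent on $\m D$ with $f(0)=0$, $f'(0)=1$), whose compactness in the Carath\'eodory topology is classical, and then remarks that the map $f \mapsto (\rho_0,\, e^{-\rho_0} f)$ gives a homeomorphism $F_{a,b} \cong [a,b] \times F_{0,0}$, so compactness follows from the compactness of a product of two compact spaces.

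In effect, you have reproved the compactness of the Schlicht class from scratch (Koebe growth $\Rightarrow$ local boundedness $\Rightarrow$ Montel, then Hurwitz to rule out constant limits), with the range of $\rho_0$ carried along. This is more self-contained, while the paper's version is terser but quotes the Schlicht result as a black box and highlights the scaling structure $F_{a,b} \cong [a,b] \times F_{0,0}$; that same product structure is what is exploited immediately afterwards in the disintegration of $\mu^c$ in Remark~\ref{r.Disintegration}. Either approach is perfectly adequate here.
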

\begin{proof}
     We note that when $a = b = 0$, $F_{0,0}$ is the family of Schlicht functions and it is a classical result that $F_{0,0}$ is compact. Therefore, as $F_{a,b}$ is homeomorphic to $[a,b] \times F_{0,0}$, we also obtain that $F_{a,b}$ is compact.
\end{proof}

We also have the following elementary lemma.

\begin{lem}\label{l.ConfRadius}
    For $r\in(0,1)$, the conformal radius $\ee^{\rho_0}$ of any loop in $\Loop^r$ is bounded from above by $1/r$ and below by $r$.
\end{lem}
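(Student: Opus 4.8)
The plan is to pin down, in the coordinates of Definition~\ref{df:loop}, how the log conformal radius $\rho_0$ of a loop $\g\in\Loop^r$ relates to the Euclidean geometry of $\g$, and then invoke the Koebe distortion/one-quarter theorem. Recall that if $\g\in\Loop^r$ then by definition $\g\subset\m A_r=\{r<|z|<1/r\}$ and $\g$ separates $0$ from $\infty$, so the bounded component $\O$ of $\Chat\setminus\g$ satisfies $B(0,r)\subset\O\subset B(0,1/r)$; here $B(0,\rho)$ denotes the Euclidean disc of radius $\rho$ centered at $0$. Let $f:\m D\to\O$ be the normalized conformal map with $f(0)=0$, $f'(0)=\ee^{\rho_0}>0$.

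First I would get the upper bound. Since $f$ is a conformal map from $\m D$ onto $\O\subset B(0,1/r)$, the Schwarz lemma applied to $z\mapsto r\,f(z)$ (which maps $\m D$ into $\m D$ and fixes $0$) gives $|r f'(0)|\le 1$, i.e.\ $\ee^{\rho_0}=f'(0)\le 1/r$. Next, for the lower bound I would use the Koebe one-quarter theorem, which states that $f(\m D)\supset B(0,f'(0)/4)$. On the other hand $f(\m D)=\O\subset B(0,1/r)$, so comparing the two inclusions would only give $f'(0)/4\le 1/r$, which is the wrong direction; instead I use the \emph{reverse} containment $B(0,r)\subset\O=f(\m D)$ together with a lower estimate on the conformal radius of a domain containing a disc. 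Concretely, the conformal radius of $\O$ at $0$ is monotone under inclusion of domains, so $\mathrm{crad}(\O,0)\ge\mathrm{crad}(B(0,r),0)=r$; since $\mathrm{crad}(\O,0)=f'(0)=\ee^{\rho_0}$, this yields $\ee^{\rho_0}\ge r$.

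Thus the proof is essentially three lines: translate ``$\g\in\Loop^r$'' into ``$B(0,r)\subset\O\subset B(0,1/r)$'', apply Schwarz to get $\ee^{\rho_0}\le 1/r$, and apply monotonicity of the conformal radius (equivalently, Schwarz lemma for the inverse map $f^{-1}:\O\to\m D$ restricted to the disc $B(0,r)$) to get $\ee^{\rho_0}\ge r$. I expect no real obstacle here; the only point requiring a moment's care is making sure the inclusions $B(0,r)\subset\O\subset B(0,1/r)$ are justified — the inner inclusion because $\g$ is a Jordan curve in $\m A_r$ separating $0$ and $\infty$ so the component of $\Chat\setminus\m A_r$ containing $0$, namely $B(0,r)$, lies in $\O$, and the outer inclusion because any point of $\O$ is connected to $0$ within $\O\subset\Chat\setminus\g$ and cannot cross $\g\subset\m A_r$, hence stays in $B(0,1/r)$. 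With these inclusions in hand the two applications of the Schwarz lemma (to $f$ and to the branch of $f^{-1}$ on $B(0,r)$) complete the argument.
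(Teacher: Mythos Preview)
Your proof is correct and is exactly the approach the paper takes: the paper's proof simply reads ``This follows directly from the Schwarz Lemma,'' and you have spelled out precisely that argument (Schwarz applied to $rf$ for the upper bound and to $f^{-1}|_{B(0,r)}$ for the lower bound, after noting $B(0,r)\subset\O\subset B(0,1/r)$). The brief detour via Koebe is unnecessary, as you yourself observe.
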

\begin{proof}
This follows directly from the Schwarz Lemma.
\end{proof}

\begin{remark}[Disintegration of the loop measure]\label{r.Disintegration}
The scaling invariance of $\mu^c$ shows that the measure induced on the coordinates is invariant under the translation map $\rho_0 \mapsto \rho_0 + t$ for any $t \in \m R$. 
Additionally, the $\sigma$-finiteness implies that 
there exists a probability measure $\mathbf{P}^c$ on the loops with conformal radius $1$ (or equivalently, on $F_{0,0}$), and 
$\l>0$ such that we can disintegrate $\mu^c$ as $\l\, \dd \rho_0  \otimes \mathbf{P}^c$, where $\dd \rho_0$ is the Lebesgue measure on the first coordinate. For this, we used the fact that $F_{a,b}$ is compact for the Carath\'eodory topology  (by Lemma~\ref{lem:compact}), then the disintegration for probability measures on compact 
spaces follows from \cite[p.~78~III]{DellacherieMeyerBook1978}, the fact the SLE loop measure is Borel by \cite[Prop.~B.1]{Baverez_Jego}
\end{remark}

\begin{prop}\label{p.Denseness}
The space $\mathcal{C}_{c}^{\infty}$ is dense in $L^{2}\left(\Loop, \mu^c \right)$.
\end{prop}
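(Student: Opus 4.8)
The plan is to use the product structure of $\mu^c$ recorded in Remark~\ref{r.Disintegration}. In the coordinates $\g \leftrightarrow (\rho_0, u_1, u_2, \dots)$, the space $\Loop$ is identified with $\m R \times F_{0,0}$, where $F_{0,0}$ is the set of loops of conformal radius $1$ (equivalently, the Schlicht functions) with the Carath\'eodory topology, and under this identification $\mu^c = \lambda\, \dd\rho_0 \otimes \mathbf{P}^c$ with $\lambda > 0$ and $\mathbf{P}^c$ a Borel probability measure on $F_{0,0}$. Since these measures are $\sigma$-finite, $L^2(\Loop, \mu^c)$ is the Hilbert space tensor product $L^2(\m R, \lambda\,\dd\rho_0) \otimes L^2(F_{0,0}, \mathbf{P}^c)$, and a cylinder function $h(\rho_0)\, e(u_1,\dots,u_k,\overline{u_1},\dots,\overline{u_k})$ is precisely the elementary tensor $h \otimes e$. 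As the algebraic tensor product of dense subspaces is dense in the Hilbert tensor product, it suffices to establish: (i) $C_c^\infty(\m R)$ is dense in $L^2(\m R, \lambda\,\dd\rho_0)$, which is classical; and (ii) the functions $e(u_1,\dots,u_k,\overline{u_1},\dots,\overline{u_k})$, over all $k \ge 1$ and $e \in C_c^\infty(\m R^{2k})$, are dense in $L^2(F_{0,0}, \mathbf{P}^c)$.

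For (ii), recall that $F_{0,0}$ is compact and metrizable for the Carath\'eodory topology (Lemma~\ref{lem:compact}) and that, by Remark~\ref{rem:topology}, the coordinate map $f \mapsto (u_n(f))_{n \ge 1}$ is a homeomorphism of $F_{0,0}$ onto a compact subset of $\prod_{n \ge 1}\{|z| \le n+1\}$; in particular each $u_n$ is continuous on $F_{0,0}$ and the family $\{u_n\}$ separates the points of $F_{0,0}$. Hence the subalgebra $\mathcal{A} \subset C(F_{0,0})$ generated by $\{u_n, \overline{u_n} : n \ge 1\}$ is unital, self-adjoint (it contains $\overline{u_n}$ for each $n$), and point-separating, so by the Stone--Weierstrass theorem it is uniformly dense in $C(F_{0,0})$. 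Each element of $\mathcal{A}$ is the restriction to $F_{0,0}$ of a polynomial $p$ in $u_1,\dots,u_k,\overline{u_1},\dots,\overline{u_k}$ for some $k$; since $|u_n| \le n+1$ on $F_{0,0}$, multiplying $p$ by a smooth cutoff equal to $1$ on $\prod_{n \le k}\{|z| \le n+1\} \subset \m R^{2k}$ produces a genuine $e \in C_c^\infty(\m R^{2k})$ with the same restriction to $F_{0,0}$. Therefore the cylinder $e$-parts are uniformly dense in $C(F_{0,0})$, and since $\mathbf{P}^c$ is a Borel probability measure on a compact metric space, $C(F_{0,0})$ is dense in $L^2(F_{0,0}, \mathbf{P}^c)$ by Lusin's theorem. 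This proves (ii).

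Combining (i), (ii), and the density of algebraic tensors then shows that the linear span of $\mathcal{C}_c^\infty$ is dense in $L^2(\Loop, \mu^c)$, which is the assertion.

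I expect the only step requiring real care to be (ii), and within it the single genuinely non-formal input: that the $u_n$ are honest continuous coordinates separating the points of the \emph{compact} space $F_{0,0}$ — which is what makes Stone--Weierstrass applicable — and this is exactly Remark~\ref{rem:topology}, so no new difficulty arises. Likewise, the whole scheme rests on the disintegration $\mu^c = \lambda\,\dd\rho_0 \otimes \mathbf{P}^c$ of Remark~\ref{r.Disintegration}; granting these two facts, the remaining ingredients (density of $C_c^\infty(\m R)$ in $L^2(\m R)$, the cutoff passage from polynomials to $C_c^\infty$, Lusin's theorem, and density of algebraic tensors in a Hilbert tensor product) are all standard.
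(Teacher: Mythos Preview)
Your proof is correct and follows essentially the same strategy as the paper: use the disintegration $\mu^c = \lambda\,\dd\rho_0 \otimes \mathbf{P}^c$ from Remark~\ref{r.Disintegration} to reduce density in $L^2(\Loop,\mu^c)$ to density in each factor separately. The only difference is in packaging---the paper appeals to a monotone class argument and then cites external density results for the $\mathbf{P}^c$-factor, whereas you work directly with the Hilbert tensor product and run Stone--Weierstrass on the compact metrizable space $F_{0,0}$ using the coordinates $u_n$; both routes are standard and yield the same conclusion.
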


\begin{proof} 
First, we can use the disintegration of the measure in Remark~\ref{r.Disintegration} to see that the Borel $\sigma$-algebra on $\Loop$ is generated by the products of (Borel) measurable sets in $\Loop$ with conformal radius $1$ and (Borel) measurable sets of finite measure on $\mathbb{R}$. This allows us to reduce the questions of the denseness of cylinder functions in $L^{2}$ spaces with respect to the two measures in disintegration. Then the denseness in $L^{2}\left(\Loop, \mu^c \right)$ follows from Dynkin's $\pi$-$\lambda$ (monotone class) theorem for the product measure space, e.g. \cite[Prop.~24]{SchwartzL1974-1975LectureNotes} for $\sigma$-finite measures. 

Note that the first component (depending on the conformal radius) in the definition of functions in  $\mathcal{C}_{c}^{\infty}$ is an $L^{2}$-function for $L^{2}$ with respect to a weighted Lebesgue measure on $[0, \infty)$. Such functions can be approximated by continuous functions with compact support. Then observe that $C^{\infty}_{c}\left( \mathbb{R} \right)$ is dense in $C_{0}\left(  \mathbb{R} \right)$ with respect to the topology of compact convergence by the Stone-Weierstra\ss~theorem.
Since $C_{c}\left( \mathbb{R} \right) \subset C_{0}\left(\mathbb{R} \right)$, then  $C^{\infty}_{c}$ is dense in $C_{c}$ as well.

Approximation of $L^{2}$-functions over the loops in $\Loop$ with conformal radius $1$ (equivalently,  over $F_{0,0}$) with respect to the measure $\mathbf{P}^c$ uses the fact that $\mathbf{P}^c$ is a probability  measure, therefore $C_{c}^{\infty}$ functions are dense in $L^{2}$. We refer for more details to \cite[Thm. 22.8 (Density Theorem), Prop. 28.23]{DriverToolsBook2004}. Note that this is applicable in our setting, as by \cite[Prop.~B.1]{Baverez_Jego} the SLE loop measure is a Borel measure with respect to the Carath\'{e}odory topology, and then we can use the fact that Polish spaces are Radon spaces, e.g. \cite[Sect.~IX.3]{BourbakiIntegrationII}. 
\end{proof}

\section{Infinitesimal conformal restriction}\label{sec:infini}

The goal of this section is to derive the infinitesimal form of the conformal restriction covariance property of the SLE loop measure Proposition~\ref{prop:intro_divergence} (Proposition~\ref{prop:divergence}). This was claimed in \cite[Sec.\,2.5.2]{Kontsevich_SLE} with a slightly different result.  For this, we need to first understand the infinitesimal variation of the Brownian loop measure. 

\subsection{Variations of Brownian loop measure}
We now describe the setup of the variational formula that we will consider. 

For $\g \in \Loop$,  let $\O$ and $\O^*$ be the connected components of $\Chat \setminus \g$ containing $0$ and $\infty$ respectively, and $\nu \in L^\infty (\m C)$ be a Beltrami differential with compact support in $\Chat \setminus\gamma$.  For $\vare \in \m{R}$ with $\|\vare \nu\|_{\infty} < 1$, let $\omega^\enu: \Chat \to \Chat$ be any quasiconformal mapping solving the Beltrami equation
$$\frac{\bar \partial \o^\enu}{\partial \o^\enu} = \enu.$$
In particular, $\o^\enu$ is conformal in a neighborhood of $\g$. By the measurable Riemann mapping theorem, another solution to the Beltrami equation is of the form $m \circ \o^\enu$ where $m \in \PSL (2,\m C)$ is an M\"obius map of $\Chat$.
We let $\g^{\enu} =\omega^\enu(\g)$. 

On the other hand, by fixing a normalization of $\o^\enu$, we can make the map $\o^\enu$ smooth in $\vare$ in a neighborhood of $0$. Then, we may also describe the variation of the loop in terms of the vector field 
$v_\nu : = \dd \o^{\enu}/ \dd \vare |_{\vare = 0}$, and  
$\nu = \bar \partial v_{\nu}$. 
From this, it is not hard to see that for any vector field $v$ that is holomorphic in a neighborhood of $\g$, there is  $\nu \in L^\infty (\m C)$ with compact support in $\Chat \setminus\gamma$ such that $v = v_\nu$.

We will only consider the case when $\nu$ is supported on one side of $\g$ and aim to compute the value of 
\begin{equation}\label{eq:W_var_goal}
\frac{\dd}{\dd \vare}\bigg|_{\vare=0} \mc W (\gamma^\enu, \Chat \setminus \o^{\enu} (D)) = \frac{\dd}{\dd \vare}\bigg|_{\vare=0} \Lambda^* (\gamma^\enu, \Chat \setminus \o^{\enu} (D)) 
\end{equation}
for any domain simply connected domain $D$ containing $\g$ such that $D \cap \textnormal{supp}(\nu) = \varnothing$ (the equality follows from Theorem~\ref{thm:Lambda_W_equal}). 
Note first that 
$$ \mc W (\gamma^\enu, \Chat \setminus \o^{\enu} (D))= \mc W (m (\gamma^\enu), \Chat \setminus m(\o^{\enu} (D)))$$
as Werner's measure is $\PSL(2,\m C)$ invariant. Hence, the variational formula \eqref{eq:W_var_goal} does not depend on the choice of solution to the Beltrami equation and will be described in both the Beltrami differential $\nu$ and holomorphic vector field $v$.

\begin{prop}\label{prop:var_loop_general}
     Let $\g$ be a Jordan curve, $\nu$ be an infinitesimal Beltrami differential with compact support in $\O$, and $v = v_\nu$. Let $D$ denote a simply connected domain in $\Chat$ containing $\g$ and $D \cap \supp (\nu) = \varnothing$. We have
      \begin{align} \label{eq:variation_Bloop}
    \frac{\dd}{\dd \vare}\bigg|_{\vare=0} \Lambda^\ast (\gamma^\enu, \Chat \setminus \o^{\enu} (D)) & =   \frac{1}{3\pi} \, \mathrm{Re}  \int_{\O} \nu(z) \mc S [f^{-1}](z) \,\dd^2 z  \nonumber\\ 
     & = \frac{1}{3\pi} \, \mathrm{Re} \left[ \frac{1}{2\ii} \int_{\partial \O} v(z) \mc S [f^{-1}](z) \,\dd z\right],
\end{align}
where $\mc S[\varphi] = \varphi'''/\varphi' - (3/2)(\varphi''/\varphi')^2$ is the Schwarzian derivative, $\dd^2 z$ is the Euclidean area measure, $\dd z$ is the contour integral, and $f$ is any conformal map $\m D \to \O$.

If $\nu$ has compact support in $\O^*$ and $D$ is a simply connected domain containing $\g$ and $D \cap \supp (\nu) = \varnothing$, then we have
\begin{align} \label{eq:variation_Bloop_g}
    \frac{\dd}{\dd \vare}\bigg|_{\vare=0} \Lambda^\ast (\gamma^\enu, \Chat \setminus \o^{\enu} (D)) & =   \frac{1}{3\pi} \, \mathrm{Re}  \int_{\O^*} \nu(z) \mc S [g^{-1}](z) \,\dd^2 z  \nonumber\\ 
    &= \frac{1}{3\pi} \, \mathrm{Re} \left[ -\frac{1}{2\ii} \int_{\partial \O} v(z) \mc S [g^{-1}](z) \,\dd z\right],
\end{align}
where $g$ is any conformal map $\m D \to \O^*$.
\end{prop}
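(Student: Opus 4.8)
The plan is to reduce the statement to a known variational formula for the Brownian loop measure expressed through the Schwarzian derivative, and then convert between the Beltrami-differential form and the contour-integral form. First I would recall that $\mathcal W(\g^\enu, \Chat\setminus\o^\enu(D)) = \Lambda^*(\g^\enu,\Chat\setminus\o^\enu(D))$ by Theorem~\ref{thm:Lambda_W_equal}, so it suffices to differentiate $\mathcal W$. Since $\nu$ is supported in $\O$ and $D\supset\g$ is disjoint from $\supp(\nu)$, the deformation $\o^\enu$ only moves the ``$\O$-side'' of the picture: it fixes $\g$ and $D$ (up to a conformal change that is the identity to first order near $\g$, or can be absorbed by the M\"obius invariance of $\mathcal W$ noted before the proposition). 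Thus the only thing varying is the conformal structure of the component $\O$, equivalently the conformal map $f:\m D\to\O$ being replaced by $\o^\enu\circ f$, whose inverse near $\g$ is $f^{-1}\circ(\o^\enu)^{-1}$. The derivative of $\mathcal W$ under such a deformation is governed by the Schwarzian of $f^{-1}$; this is essentially the computation underlying the $c/2$-coefficient in the conformal restriction formula \eqref{eqn.conformal.restr} and the known second-variation/Schwarzian expansion of $\Lambda^*$ (as in Lawler--Schramm--Werner and the Brownian loop soup literature), which I would invoke or reprove by a short Loewner/It\^o computation giving precisely the constant $\tfrac{1}{3\pi}$.

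Granting the first equality in \eqref{eq:variation_Bloop}, the second equality is a purely complex-analytic identity: I would write $\nu = \bar\partial v$ (valid since $v=v_\nu$), so that
\[
\int_{\O}\nu(z)\,\mathcal S[f^{-1}](z)\,\dd^2 z = \int_{\O}\bar\partial v(z)\,\mathcal S[f^{-1}](z)\,\dd^2 z,
\]
and apply Stokes' theorem (in the form $\int_{\Omega}\bar\partial\phi\,\dd^2 z = \tfrac{1}{2\ii}\int_{\partial\Omega}\phi\,\dd z$ for $\phi$ holomorphic away from where it is being integrated against) together with the fact that $\mathcal S[f^{-1}]$ is holomorphic on $\O$ and $v$ is holomorphic in a neighborhood of $\partial\O=\g$. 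A subtlety is the behavior at $0\in\O$ (where $f^{-1}$ has a simple pole, so $\mathcal S[f^{-1}]$ is holomorphic there) and, if $\infty\in\O$ is possible, growth at $\infty$; since here $0\in\O$ and $\O$ is bounded-or-not depending on $\g$, I would check that $v$ can be taken with compact support in $\O$ away from $0$, or handle the residue at $0$ directly — in any case the holomorphy of $\mathcal S[f^{-1}]$ on all of $\O$ means no interior contribution survives and only the boundary term $\tfrac{1}{2\ii}\int_{\partial\O}v\,\mathcal S[f^{-1}]\,\dd z$ remains. Taking real parts gives the claimed formula; note $\mathcal S[f^{-1}]$ is independent of the choice of $f$ up to precomposition by a M\"obius map, and the Schwarzian is invariant under postcomposition by M\"obius maps, consistent with the asserted independence of the choice of $f$.

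For the second half of the proposition, with $\nu$ supported in $\O^*$, the argument is identical after swapping the roles of $\O$ and $\O^*$ and of $f$ and $g$; the only change is a sign, coming from the fact that $\partial\O^* = \g$ carries the opposite orientation to $\partial\O$, which produces the $-\tfrac{1}{2\ii}$ in \eqref{eq:variation_Bloop_g}. I expect the main obstacle to be the first equality — pinning down the variational formula for $\Lambda^*$ (equivalently $\mathcal W$) under an infinitesimal conformal deformation of one complementary component, with the exact constant $\tfrac{1}{3\pi}$ and in the precise normalization used here; this requires either a careful citation to the Brownian-loop-measure literature combined with the $\Lambda^*$-vs-$\mathcal W$ comparison, or a self-contained Loewner-energy/It\^o-calculus derivation. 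The passage from the area integral to the contour integral, and the $\O\leftrightarrow\O^*$ symmetry, are routine once the holomorphy and decay of the integrands are checked.
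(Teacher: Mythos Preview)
Your Stokes-theorem reduction of the area integral to the contour integral is correct and matches the paper's Remark~\ref{rem:mu_v}, and the $\O\leftrightarrow\O^*$ symmetry argument for \eqref{eq:variation_Bloop_g} is also fine. The real gap is in the first equality, and it is not the constant $\tfrac{1}{3\pi}$ per se.

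The variational formula you want to invoke is Theorem~\ref{thm:WP} (from \cite{SungWang}), which gives exactly the right-hand side of \eqref{eq:variation_Bloop} but \emph{only for Weil--Petersson quasicircles} $\g$. That hypothesis is not decorative: the proof in \cite{SungWang} passes through the Loewner energy, which is finite only in the Weil--Petersson class. A general Jordan curve in $\Loop$ need not be Weil--Petersson (and SLE loops, the intended application downstream, almost surely are not). So the substance of Proposition~\ref{prop:var_loop_general} is precisely the extension from Weil--Petersson to arbitrary Jordan curves, and your proposal does not address this; the phrase ``a short Loewner/It\^o computation'' does not cover it, because those computations produce the Schwarzian only against smooth or Weil--Petersson boundaries.

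The paper closes this gap by approximating $\g$ from both sides by the analytic curves $\g_r = f(rS^1)$ for $r<1$ and $\g_r = g(rS^1)$ for $r>1$, applying Theorem~\ref{thm:WP} to each $\g_r$, and showing that the derivative $\partial_t L(t,r)$ of $L(t,r)=\mc W(\o^{t\nu}(\g_r),\Chat\setminus\o^{t\nu}(D))$ extends continuously to $r=1$ via Carath\'eodory continuity of $\mc S[f_{t,r}^{-1}]$. The existence of the derivative at $r=1$ is then obtained by a squeeze: because $D$ is simply connected, $r\mapsto L(t,r)$ is monotone, so $L(t,1)$ is trapped between $L(t,r)$ and $L(t,r')$ with $r<1<r'$ chosen so that $L(0,r)-L(0,1)=o(t)$. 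This monotonicity is exactly where the hypothesis that $D$ be simply connected enters, a hypothesis your outline never uses.

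A minor point: Theorem~\ref{thm:Lambda_W_equal} does not assert $\mc W = \Lambda^*$ pointwise, only that the corresponding \emph{differences} under a conformal map agree; this still suffices for equality of the $\vare$-derivatives (as in \eqref{eq:W_var_goal}), but your opening sentence overstates it.
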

\begin{remark}\label{rem:change_domain}
We note that if $\g \subset D' \subset D$, it follows from the definition of $\Lambda^\ast$ that 
$$ \Lambda^\ast (\gamma, \Chat \setminus D') =  \Lambda^\ast (\gamma, \Chat \setminus D) + \mc B_{D} (\g, D \setminus D'). $$
In particular, if we vary $D$ by a conformal map $\o^\enu$ (since $D \cap \supp (\nu) = \emptyset$), the second term on the right-hand side is invariant by conformal invariance of the Brownian loop measure. Hence, the left-hand sides of \eqref{eq:variation_Bloop} and \eqref{eq:variation_Bloop_g} do not depend on $D$ as long as $D \cap \supp (\nu) = \emptyset$.
\end{remark}
\begin{remark}\label{rem:indep_choice}
We note that by the chain rule of the Schwarzian derivative:
$$\mc S [\varphi \circ \psi] = \mc S [\varphi] \circ \psi (\psi')^2 + \mc S[\psi] $$
and the fact that $\mc S [m] = 0$ for all M\"obius maps $m$,  $\mc S[f^{-1}]$ does not depend on the choice of $f$ since any other choice is of the form $f \circ m$ where $m$ is a M\"obius map preserving $\m D$.
\end{remark}

\begin{remark}\label{rem:mu_v}
The identity on the right-hand side of \eqref{eq:variation_Bloop}  follows from 
$\nu = \bar \partial  v$ and  Stokes' formula:
\begin{align*}
     \int_{\O} \nu(z) \mc S [f^{-1}](z) \,\dd^2 z   & = \int_{\O \cap \supp(\nu)} \nu(z) \mc S [f^{-1}](z) \,\dd^2 z \\
     & = \frac{1}{2 \ii} \int_{\O \cap \supp(\nu)} \bar \partial \left(v(z) \mc S [f^{-1}](z)\right) \,\dd \bar z \wedge \dd z \\
     & = \frac{1}{2 \ii} \int_{\O \cap \supp(\nu)} \dd \left( v(z) \mc S [f^{-1}](z) \, \dd z \right) \\
     & = \frac{1}{2 \ii} \int_{\partial \O}  v(z) \mc S [f^{-1}](z) \, \dd z.
\end{align*}
  Therefore, when $\g$ is not smooth, the contour integral on $\partial \O$ is understood as the contour integral along a smooth curve in $\O$ homotopic to $\partial \O$ outside of the support of $\nu$ and similarly for Equation \eqref{eq:variation_Bloop_g}.
\end{remark}

The proof of Proposition~\ref{prop:var_loop_general} follows from the following result with exactly the same setup but assuming that $\g$ is Weil--Petersson. The main reason for this additional assumption is that the proof of Theorem~\ref{thm:WP} goes through a variational formula of the Loewner energy \cite{W2,W3}, which is finite only for Weil--Petersson quasicircles.

\begin{thm}[See {\cite[Cor.\,1.6]{SungWang}}]\label{thm:WP} Let $\g$ be a Weil--Petersson quasicircle. Let $\nu\in L^\infty(\m{C})$ be an infinitesimal Beltrami differential with compact support in $\Chat \setminus\gamma$. For $\vare \in \m{R}$ with $\|\vare \nu\|_{\infty} < 1$, let $\omega^\enu: \Chat \to \Chat$ be any quasiconformal mapping with Beltrami coefficient $\enu$.
Let $\g^{\vare \nu} = \omega^{\vare \nu} (\g)$.
For any annulus $A$ containing $\g$ such that $A \cap \textnormal{supp}(\nu) = \varnothing$,
we have
    \begin{align} \label{eq:variation_loop_measure_1}
    & \frac{\dd}{\dd \vare}\bigg|_{\vare=0} \mc W (\gamma^\enu, \Chat \setminus \o^{\enu} (A)) \nonumber \\
     & = \frac{1}{3\pi} \, \mathrm{Re} \left[ \int_{\O} \nu(z) \mc S [f^{-1}](z) \,\dd^2 z+ \int_{\O^*} \nu(z) \mc S [g^{-1}](z) \,\dd^2 z \right].
\end{align}
\end{thm}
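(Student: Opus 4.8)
The plan is to reduce the first variation of Werner's measure to that of the Loewner energy, which is explicitly known. The bridge is the identity, valid for a Weil--Petersson quasicircle $\g$,
$$\frac{\dd}{\dd\vare}\bigg|_{\vare=0}\mc W(\g^\enu,\Chat\setminus\o^\enu(A))=-\frac{1}{12}\,\frac{\dd}{\dd\vare}\bigg|_{\vare=0} I^L(\g^\enu),$$
equivalently with $\Lambda^\ast$ in place of $\mc W$, since the two agree inside the variation by Theorem~\ref{thm:Lambda_W_equal}. The constant $1/12$ reflects the central-charge bookkeeping: the restriction-covariance weight $\frac{c}{2}\Lambda^\ast$ of \eqref{eqn.conformal.restr} and the Onsager--Machlup exponent $\frac{c}{24}I^L$ of \cite{carfagnini2023onsager} differ precisely by this factor, and $c$ cancels. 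This is exactly where the Weil--Petersson hypothesis enters: $I^L(\g)$ is finite if and only if $\g$ is a Weil--Petersson quasicircle, and the first-variation formula for $I^L$ used below is only available in this class (it is the reason the general Jordan-curve case of Proposition~\ref{prop:var_loop_general} must afterwards be reached by approximation).

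Next I would invoke the first-variation formula for the Loewner energy from \cite{W2,W3}, which is the underlying content of \cite{SungWang}: for $\nu\in L^\infty(\m C)$ compactly supported in $\Chat\setminus\g$,
$$\frac{\dd}{\dd\vare}\bigg|_{\vare=0} I^L(\g^\enu)=-\frac{4}{\pi}\,\Re\left[\int_{\O}\nu(z)\,\mc S[f^{-1}](z)\,\dd^2 z+\int_{\O^*}\nu(z)\,\mc S[g^{-1}](z)\,\dd^2 z\right],$$
the two integrals collecting the parts of $\supp(\nu)$ lying in $\O$ and in $\O^*$ respectively; the Schwarzians $\mc S[f^{-1}]$ and $\mc S[g^{-1}]$ (well defined by Remark~\ref{rem:indep_choice}) encode the Takhtajan--Teo K\"ahler structure of $I^L$. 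Substituting into the bridge identity and multiplying the constants, $(-\tfrac{1}{12})\cdot(-\tfrac{4}{\pi})=\tfrac{1}{3\pi}$, yields exactly \eqref{eq:variation_loop_measure_1}. The expression is manifestly $\m R$-linear in $\nu$, so it suffices to treat the two sides of $\g$ separately and add.

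The main obstacle is justifying the bridge identity, namely that the first variation of $\mc W(\g,\Chat\setminus A)$ is governed purely by $I^L(\g)$, with no residual contribution from $A$ or from the conformal radii. Here I would use that, by conformal invariance of the Brownian loop measure together with Remark~\ref{rem:change_domain}, the quantity $\Lambda^\ast(\g^\enu,\Chat\setminus\o^\enu(A))$ is independent of $A$ to first order as long as $A\cap\supp(\nu)=\varnothing$: replacing $A$ by a smaller annulus changes it by a Brownian-loop mass $\mc B_{\o^\enu(A)}(\g^\enu,\cdot)$ that is itself conformally invariant, hence $\vare$-independent. Since $\o^\enu$ is conformal on $A\supset\g$ and genuinely quasiconformal only outside $A$, its action on $\mc W(\g,\cdot)$ is felt only through the change of the global conformal structure attached to $\g$ --- precisely the dependence measured by $I^L$ --- while the auxiliary terms (log conformal radii and the reference configuration) are insensitive to a deformation supported outside $A$ and drop out under $\dd/\dd\vare$. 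A secondary technical point is the differentiability in $\vare$ of all quantities involved, which follows from the smooth dependence of $\o^\enu$ on $\vare$ after fixing a normalization, together with the finiteness of $\mc W(\g,\Chat\setminus A)$ established in \cite{KemppainenWerner2016}.
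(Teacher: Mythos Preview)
The paper does not prove this theorem: it is quoted verbatim from \cite[Cor.\,1.6]{SungWang}, and the only information the paper adds is the one-line remark preceding the statement that ``the proof of Theorem~\ref{thm:WP} goes through a variational formula of the Loewner energy \cite{W2,W3}.'' So there is no in-paper proof to compare against; your high-level strategy (reduce to the known first variation of $I^L$) does match that hint.

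That said, your argument has a genuine gap at exactly the place you flag as ``the main obstacle'': the bridge identity
\[
\frac{\dd}{\dd\vare}\Big|_{\vare=0}\mc W(\g^\enu,\Chat\setminus\o^\enu(A))=-\frac{1}{12}\,\frac{\dd}{\dd\vare}\Big|_{\vare=0} I^L(\g^\enu)
\]
is not established by what you write. Your justification rests on comparing the restriction-covariance weight $\tfrac{c}{2}\Lambda^\ast$ with the Onsager--Machlup exponent $\tfrac{c}{24}I^L$ from \cite{carfagnini2023onsager}, but the latter is a limit of ratios of $\mu^c$-masses of shrinking neighborhoods, not a pointwise functional identity between $\Lambda^\ast$ (or $\mc W$) and $I^L$; one cannot differentiate it to extract the bridge. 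Likewise, your second paragraph of justification --- that $\o^\enu$ is conformal on $A$ and hence ``its action on $\mc W(\g,\cdot)$ is felt only through the change of the global conformal structure attached to $\g$, precisely the dependence measured by $I^L$'' --- is a rephrasing of the conclusion, not a derivation: you need an actual identity relating $\mc W(\g,\Chat\setminus A)$ to $I^L(\g)$ plus terms that are genuinely invariant under the deformation. The route taken in \cite{SungWang} (as signposted by the citations \cite{W2,W3}) passes through concrete identities linking $I^L$ to zeta-regularized determinants of Laplacians and in turn to normalized loop-measure masses; the bridge is proved, not inferred from central-charge bookkeeping. Absent that, the constants $-\tfrac{1}{12}$ and $-\tfrac{4}{\pi}$ you multiply are individually unsupported, even though their product $\tfrac{1}{3\pi}$ is correct.
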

If $\nu$ is supported on one side of $\g$, Remark~\ref{rem:change_domain} shows that we can take $A$ to be a simply connected domain $D$ as in Proposition~\ref{prop:var_loop_general}.

\begin{proof}[Proof of Proposition~\ref{prop:var_loop_general}]
   When $\g$ is a Weil--Petersson quasicircle, then Proposition~\ref{prop:var_loop_general} follows from Theorem~\ref{thm:WP} and Remark~\ref{rem:mu_v}. 

   Now we assume $\g$ is a general Jordan curve and show \eqref{eq:variation_Bloop}. The proof of \eqref{eq:variation_Bloop_g} is exactly the same. 
   Without loss of generality, let $\g \in \Loop$ (and it separates $0$ and $\infty$). Let $f : \m D \to \O$ be a conformal map fixing $0$ and $g: \m D \to \O^*$ be a conformal map fixing $\infty$. Let $\nu \in L^\infty (\O)$ be compactly supported. 
   Let $D$ be a simply connected domain such that $\g \subset D$ and $D \cap \supp (\nu) = \emptyset$.
   Let $(\g_r)_{r \in [1-\d, 1+\d] \setminus \{1\}}$ be a family of analytic curves defined by 
   $$\g_r = f (r S^1), \quad \text{if } r < 1, \text{ and } \g_r = g (r S^1) \quad  \text{if } r > 1,$$
   and $\d$ is small enough such that $\g_r \subset D$ for all $r \in (1-\d, 1)$ (note that $\g_r$ is automatically in $D$ for $r > 1$). We also write $\g = \g_1$.

Now, we consider the continuous function 
$$ L (t, r) =  \mc W (\o^{t\nu} (\gamma_r), \Chat \setminus \o^{t\nu} (D)), \quad \text{ for all } (t, r) \in [0,1/\norm{\nu}_\infty) \times (1-\d, 1+ \d).$$ 

Theorem~\ref{thm:WP} applied to $\g_{t,r} : = \o^{t\nu} (\g_r)$ gives that for all $r \neq 1$,
  \begin{align*}
      \partial_t L(t,r) & =   \frac{\dd}{\dd \vare}\bigg|_{\vare=0} \mc W (\o_t^\vare (\gamma_{t,r}), \Chat \setminus \o_t^\vare( \o^{t\nu} (D)))\\
      & = \frac{1}{3\pi} \, \mathrm{Re} \left[ \int_{\O_{t,r}} \nu_t(z) \mc S [f_{t,r}^{-1}](z) \,\dd^2 z\right]
  \end{align*}
where $f_{t,r}$ is a conformal map from $\m D$ onto $\O_{t,r}$ the bounded connected component of $\m C \setminus \g_{t,r}$,  the quasiconformal map $\o_t^\vare := \o^{(t+\vare) \nu} \circ (\o^{t\nu})^{-1}$ has Beltrami coefficient 
$$ \frac{\bar \partial \o_t^\vare}{\partial \o_t^\vare} = \left(\frac{\vare \nu}{ 1- t(t+\vare) |\nu|^2 } \frac{\partial \o^{t\nu}}{\overline{\partial\o^{t\nu}}}\right) \circ (\o^{t\nu})^{-1}$$
and 
$$\nu_t = \frac{\dd}{\dd \vare}\bigg|_{\vare=0} \left(  \frac{\bar \partial \o_t^\vare}{\partial \o_t^\vare}\right) = \left(\frac{\nu}{ 1- t^2|\nu|^2 } \frac{\partial \o^{t\nu}}{\overline{\partial\o^{t\nu}}}\right) \circ (\o^{t\nu})^{-1}$$
is continuous in $t$ and independent of $r$. 

It is clear that $(t,r) \to \g_{t,r}$ is continuous for the Carath\'eodory topology, which implies that $(t,r) \mapsto \mc S[f_{t,r}]$ is continuous for the topology of uniform convergence on compact subsets of $\m D$.  
From this, we see that $\partial_t L (t,r)$ extends to a continuous function $h$ on $[0,1/\norm{\nu}_\infty) \times (1-\d, 1+ \d)$ with $h (t,r) = \partial_t L(t,r)$ for all $r \neq 1$.

On the other hand,  since $D$ is simply connected, we have the monotonicity 
$$ L(t,r) > L(t,1) > L(t, r') \quad \text{ for all } t \text{ and } r  < 1 < r'. $$
Hence, let $r = r(t) < 1$ such that $L(0,r(t)) - L(0,1) = o (t)$, we have
   $$\frac{L(t, 1) - L(0,1)}{t} \le \frac{L(t, r) - L(0,r)}{t} +  \frac{L(0, r) - L(0,1)}{t} \xrightarrow[]{t \to 0} h(0,1).$$
   This implies 
   $$\limsup_{t\to 0} \frac{L(t, 1) - L(0,1)}{t} \le h (0,1). $$
   Similarly, we also have the lower bound by considering $r = r(t) > 1$:
  $$ \liminf_{t \to 0} \frac{L(t, 1) - L(0,1)}{t}  \ge h (0,1).$$ 
 This implies that $t \mapsto L(t,1)$ is also smooth and has derivative at $0$ given by 
  $$
    \frac{\dd}{\dd \vare}\bigg|_{\vare=0} \mc W (\gamma^\enu, \Chat \setminus \o^{\enu} (D)) =  h(0,1) = \frac{1}{3\pi} \, \mathrm{Re}  \int_{\O} \nu(z) \mc S [f^{-1}](z) \,\dd^2 z $$
    which completes the proof by \eqref{eq:W_var_goal}.
\end{proof}

\subsection{Neretin polynomials}

\begin{df}\label{df:neretin}
Let $\g \in \Loop$, 
$f : \m D \to \O$ and $g : \m D \to \O^*$ be any conformal maps. 
    We define the \emph{Neretin polynomials} $P_k = P_k [\g] \in \m C$ as the coefficients in the series expansion of $z^2 \mc S [f^{-1}]$, namely,
$$z^2 \mc S [f^{-1}] (z) = \sum_{ k \ge 0} P_k z^k.$$
We note that by Remark~\ref{rem:indep_choice}, $\mc S[f^{-1}]$ only depends on $\O$.
Similarly, define $Q_k = Q_k [\g] \in \m C$ as the coefficients in the series expansion of $z^{2} \mc S [g^{-1}]$, namely,
$$z^{2} \mc S [g^{-1}] (z) = \sum_{ k \ge 0} Q_k z^{-k}.$$
\end{df}

We collect the properties of $P_k$ in the following lemma which explains the name  of ``polynomial''.
\begin{lem}\label{lem:Neretin_property} Let $f$ be the normalized conformal map associated with $\g \in \Loop$.
    For all $k \ge 0$, the function $P_k$ is a polynomial in the coefficients of $f^{-1}$. For $\lambda >0$ and $\g \in \Loop$, let  $\lambda \g$ be the curve obtained as the image of $\g$ by $z \mapsto \lambda z$. We have 
    \begin{equation}\label{eq:scale_neretin}
       P_k[\l\g] = P_k [\g] \l^{-k}.
\end{equation}
Moreover, for all $r \in (0,1)$, $P_k$ is bounded on $\Loop^r$ and we have $P_k \in \mc C^\infty$ (Definition~\ref{df.cylinder_fun}). 
\end{lem}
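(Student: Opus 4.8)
The plan is to establish the four assertions — polynomiality, the homogeneity \eqref{eq:scale_neretin}, boundedness on $\Loop^r$, and $P_k\in\mc C^\infty$ — in an order in which each feeds the next, using only power-series algebra of the Schwarzian derivative together with Remarks~\ref{rem:indep_choice} and~\ref{rem:topology} and Lemmas~\ref{l.ConfRadius} and~\ref{lem:compact}. I would first fix $\g\in\Loop$ with normalized conformal map $f(z)=\ee^{\rho_0}z\,(1+\sum_{n\ge1}u_n z^n)$ and write $\varphi:=f^{-1}$, which is univalent near $0$ with $\varphi(0)=0$ and $\varphi'(0)=\ee^{-\rho_0}\neq 0$ (recall $\mc S[f^{-1}]$ is independent of the choice of $f$ by Remark~\ref{rem:indep_choice}). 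Writing $\varphi(w)=\sum_{n\ge1}a_n w^n$ with $a_1=\ee^{-\rho_0}$, the nonvanishing of $a_1$ makes $1/\varphi'$ a power series with coefficients polynomial in $a_1^{-1},a_2,a_3,\dots$, hence so are those of $\mc S[\varphi]=\varphi'''/\varphi'-(3/2)(\varphi''/\varphi')^2$, the $j$-th one involving only $a_1^{-1},a_2,\dots,a_{j+3}$. Since $\mc S[\varphi]$ is holomorphic at $0$, $z^2\mc S[\varphi](z)$ has no constant or linear term, so $P_0=P_1=0$, and for $k\ge2$ the quantity $P_k$ equals the $(k-2)$-th Taylor coefficient of $\mc S[\varphi]$, a polynomial in $a_1^{-1},a_2,\dots,a_{k+1}$ — equivalently, by $\mc S[cf^{-1}]=\mc S[f^{-1}]$, a polynomial in $a_2/a_1,\dots,a_{k+1}/a_1$. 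That settles polynomiality.

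Next I would prove \eqref{eq:scale_neretin}. For $\l>0$ the map $\l f:\m D\to\l\O$ parametrizes the bounded component of $\Chat\setminus\l\g$, with $(\l f)^{-1}(w)=f^{-1}(w/\l)$; the chain rule $\mc S[\varphi\circ\psi]=\mc S[\varphi]\circ\psi\,(\psi')^2+\mc S[\psi]$ together with the vanishing of the Schwarzian on M\"obius maps gives $\mc S[(\l f)^{-1}](z)=\l^{-2}\mc S[f^{-1}](z/\l)$, whence $z^2\mc S[(\l f)^{-1}](z)=(z/\l)^2\mc S[f^{-1}](z/\l)=\sum_{k\ge0}P_k[\g]\,\l^{-k}z^k$, i.e. \eqref{eq:scale_neretin}.

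Combining the polynomiality and homogeneity steps, I would then deduce $P_k\in\mc C^\infty$. Applying \eqref{eq:scale_neretin} with $\l=\ee^{\rho_0}$ to $\hat\g:=\ee^{-\rho_0}\g$, which has log conformal radius $0$ and normalized conformal map $\hat f(z)=z(1+\sum_n u_n z^n)$ depending only on $(u_n)_{n\ge1}$, gives $P_k[\g]=\ee^{-k\rho_0}\,P_k[\hat\g]$. By Lagrange inversion, the Taylor coefficients of $\hat f^{-1}$ at $0$ are $1$ in degree $1$ and universal polynomials in $u_1,\dots,u_{n-1}$ in degree $n$, so by the polynomiality step $P_k[\hat\g]=\mathsf P_k(u_1,\dots,u_k)$ for a universal polynomial $\mathsf P_k$. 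Hence $P_k=h(\rho_0)\,e(u_1,\dots,u_k,\overline{u_1},\dots,\overline{u_k})$ with $h(\rho_0)=\ee^{-k\rho_0}\in C^\infty(\m R)$ and $e=\mathsf P_k\in C^\infty(\m R^{2k})$ (in fact holomorphic in $u_1,\dots,u_k$), so $P_k\in\mc C^\infty$ — though not in $\mc C^\infty_c$, consistently with the statement. For boundedness on $\Loop^r$, I would observe that by Remark~\ref{rem:topology} the Carath\'eodory topology on $\Loop$ is the product topology on $(\rho_0,u_1,u_2,\dots)$, so $P_k$ depends continuously on finitely many coordinates and extends continuously to the Carath\'eodory closure; since every $\g\in\Loop^r$ has $\rho_0(\g)\in[\log r,\log(1/r)]$ by Lemma~\ref{l.ConfRadius} and that slab of loops has compact closure by Lemma~\ref{lem:compact}, $P_k$ is bounded on $\Loop^r$. (Alternatively, one argues directly from $|u_n|\le n+1$ and $\ee^{-k\rho_0}\le r^{-k}$ on $\Loop^r$.)

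I do not expect a genuine obstacle: the argument is routine power-series bookkeeping plus invocation of the preceding lemmas. The only point needing care — and where \eqref{eq:scale_neretin} does essential work rather than being decorative — is the $\mc C^\infty$ claim, since the definition of a smooth cylinder function demands the \emph{product} form $h(\rho_0)\,e(u)$; it is exactly the homogeneity in $\l$ that confines all of the $\rho_0$-dependence to the single explicit factor $\ee^{-k\rho_0}$, without which one would only conclude that $P_k$ is some smooth function of $(\rho_0,u_1,\dots,u_k)$ jointly.
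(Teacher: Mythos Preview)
Your proof is correct and follows essentially the same approach as the paper's: power-series algebra of the Schwarzian for polynomiality, the chain rule for the scaling identity, Lagrange inversion to pass from the coefficients of $f^{-1}$ to the $(u_n)$-coordinates, and coefficient bounds for boundedness on $\Loop^r$. The only minor variation is in the boundedness step --- the paper bounds the coefficients $v_k$ of $f^{-1}$ via Koebe's quarter theorem plus de~Branges applied to a Schlicht rescaling of $f^{-1}$, whereas you bound $\mathsf P_k(u_1,\dots,u_k)$ directly via $|u_n|\le n+1$ or via the compactness of Lemma~\ref{lem:compact} --- and your explicit derivation of the product form $\ee^{-k\rho_0}\mathsf P_k(u)$ makes the verification of $P_k\in\mc C^\infty$ slightly cleaner than the paper's one-line remark.
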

\begin{proof}
   We show \eqref{eq:scale_neretin} first. Let $f_\l = \l f: \m D \to \l\O$ be the normalized conformal map associated with $\l \g$. Then by the chain rule, we obtain
$$
\sum_{k\ge 2} P_k [\l \g] z^{k-2} = \mc S[f_\l^{-1}] (z) = S [f^{-1}] (\l^{-1} z) \l^{-2}  = \sum_{k\ge 2} P_k [\g]  \l^{-k} z^{k-2}.
$$
This proves \eqref{eq:scale_neretin}. Alternatively, we can also see it from $L_0 P_k  = k P_k$. 
   
   Recall that we wrote
   $$f(z) = \ee^{\rho_0 (\g)} z \left(1 + \sum_{n\ge 1} u_n z^n \right).$$
   It is convenient to also introduce the coefficients of $f^{-1}$ and write
   $$f^{-1} (w) = \ee^{-\rho_0 (\g)} w \left(1 + \sum_{k\ge 1} v_k w^k \right). $$ 
   From \eqref{eq:scale_neretin}, we assume that $\rho_0 (\g)= 0$ from now on. 

   It was shown in \cite[p.~742]{Kirillov1998a} that the coefficient of $z^k$ of $z^2 \mc S[f](z)$ is a polynomial in $(u_1, \ldots, u_k)$. It is also true that $P_k[\g]$ is also a polynomial in $(v_1, \ldots, v_k)$ as the proof is algebraic and only depends on the local expansion of the conformal maps at $0$.

   Now we show that $P_k$ is cylindrical in the coefficients $(u_n)$. It suffices to relate the coefficients $(v_n)$ to $(u_n)$. Indeed, 
   from the identity
   \begin{align*}
      z  & =  f^{-1} (f (z))  = f^{-1}\Bigg(z\Big(1 +\sum_{n\ge 1} u_{n} z^{n}\Big) \Bigg) \\
      & = z \Big (1 + \sum_{n\ge 1} u_n z^n\Big)\Bigg(1 + \sum_{k\ge 1} v_k z^k \Big (1 + \sum_{n\ge 1} u_n z^n\Big)^k \Bigg)
   \end{align*}
and by comparing the coefficients of the expansion on both sides, we can deduce inductively that $v_n$ is a rational function in $(u_1, \ldots, u_n)$, which is known as the Lagrange--B\"urmann formula. 

 Finally, we show that $P_k$ is bounded on the space of loops where $\rho_0 (\g) = 0$.  This implies the boundedness of $P_k$ on $\Loop^r$ by Lemma~\ref{l.ConfRadius} and \eqref{eq:scale_neretin}. It also implies that $P_k \in \mc C^\infty$ as it is a bounded cylindrical rational function. 
 
 For this, Koebe's one-quarter theorem shows that the bounded connected component $\O$ of $\m C\setminus \g$ contains the ball $B(0,1/4)$. Therefore, the map $h : z\mapsto 4 f^{-1} (z/4)$ is univalent on $\m D$ and satisfies $h'(0) = 1$.  By Bieberbach--de Branges theorem \cite{deBranges}, 
 $$h (w) = w \Bigg( 1 + \sum_{k \ge 1} v_k 4^{-k} w^k \Bigg)$$
 has coefficients $|v_k 4^{-k}| \le k+1$. This shows that $|v_k| \le (k+1) 4^k$ and as $P_k$ is a polynomial in $(v_1, \ldots, v_k)$, we obtain that $P_k$ is also bounded.  
\end{proof}

The next result shows that the functions $P_k$ and $Q_k$ are related by $\tau : z \mapsto 1/\bar z$.
\begin{lem}\label{lem:P_Q_tau}
 For all $k \geqslant 0$,  we have $Q_k \circ \tau = \overline{P_k}$. In particular, $Q_k$ is bounded on $\Loop^r$ for all $r \in (0,1)$.
\end{lem}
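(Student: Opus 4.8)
The plan is to relate the conformal maps uniformizing $\tau(\g)$ directly to those of $\g$, and then track how $\tau$ acts on the Schwarzian derivative. Since $\tau : z \mapsto 1/\bar z$ is an anticonformal involution of $\Chat$ exchanging $0$ and $\infty$, it maps the bounded component $\O$ of $\Chat \setminus \g$ onto the \emph{unbounded} component of $\Chat \setminus \tau(\g)$, and $\O^*$ onto the bounded component of $\Chat \setminus \tau(\g)$. Hence $Q_k[\tau(\g)]$ should be computed from a conformal map $\m D \to \tau(\O)$ sending $0$ to $\infty$. Such a map is obtained from the normalized map $f$ of $\g$: writing $R : z \mapsto \bar z$, set $\tilde g := \tau \circ f \circ R$. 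This is a conformal (holomorphic, being the composition of the conformal $f$ with the two anticonformal maps $R$ and $\tau$) injection $\m D \to \tau(\O)$ with $\tilde g(0) = \tau(f(0)) = \infty$, so by Remark~\ref{rem:indep_choice} it is an admissible choice for the definition of $Q_k[\tau(\g)]$. Its inverse is $\tilde g^{-1} = R \circ f^{-1} \circ \tau$, i.e. $\tilde g^{-1}(w) = \overline{f^{-1}(1/\bar w)}$.

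Next I would compute the Schwarzian. Write $\tilde g^{-1} = G \circ \iota$ with $\iota(w) = 1/w$, a M\"obius map so $\mc S[\iota] = 0$, and $G(w) := \overline{f^{-1}(\bar w)}$. The chain rule $\mc S[\varphi \circ \psi] = (\mc S[\varphi] \circ \psi)(\psi')^2 + \mc S[\psi]$ gives $\mc S[\tilde g^{-1}](w) = w^{-4}\,\mc S[G](1/w)$. Since $G^{(n)}(w) = \overline{(f^{-1})^{(n)}(\bar w)}$ for all $n$ and the Schwarzian is a rational expression in the derivatives with real coefficients, one gets $\mc S[G](w) = \overline{\mc S[f^{-1}](\bar w)}$, hence $w^2 \mc S[\tilde g^{-1}](w) = w^{-2}\,\overline{\mc S[f^{-1}](1/\bar w)}$. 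Now conjugate the defining expansion $\zeta^2 \mc S[f^{-1}](\zeta) = \sum_{k \ge 0} P_k[\g]\zeta^k$ at $\zeta = 1/\bar w$; using $\overline{\zeta} = 1/w$ this reads $w^{-2}\,\overline{\mc S[f^{-1}](1/\bar w)} = \sum_{k \ge 0} \overline{P_k[\g]}\,w^{-k}$. Comparing with the defining expansion $w^2 \mc S[\tilde g^{-1}](w) = \sum_{k \ge 0} Q_k[\tau(\g)]\,w^{-k}$ yields $Q_k[\tau(\g)] = \overline{P_k[\g]}$, that is, $Q_k \circ \tau = \overline{P_k}$.

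Finally, the boundedness of $Q_k$ on $\Loop^r$ follows at once: $\tau$ restricts to an involution of $\Loop^r$, and $P_k$ is bounded on $\Loop^r$ by Lemma~\ref{lem:Neretin_property}, so $Q_k = \overline{P_k} \circ \tau$ is bounded there. The only delicate point, and the step I would be most careful about, is the bookkeeping with the two anticonformal maps — verifying the identity $\mc S[G](w) = \overline{\mc S[f^{-1}](\bar w)}$ and keeping track of the powers of $w$ produced by the chain rule — though this is ultimately a formal computation.
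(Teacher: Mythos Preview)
Your proof is correct and follows essentially the same approach as the paper: both introduce the conformal map $\tilde g = \tau \circ f \circ R$ (the paper writes $\sigma$ for your $R$) from $\m D$ to the unbounded component of $\Chat \setminus \tau(\g)$, compute $\mc S[\tilde g^{-1}]$ via the chain rule and the conjugation identity, and then match coefficients. The only cosmetic difference is that the paper applies the chain rule for the antiholomorphic map $\tau$ directly (writing $\overline{\mc S[f^{-1}]\circ \tau\,(\bar\partial\tau)^2}$), whereas you factor $\tilde g^{-1} = G \circ \iota$ with $\iota(w)=1/w$ and $G(w)=\overline{f^{-1}(\bar w)}$; these are two bookkeepings of the same computation.
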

\begin{proof}
    Let $\g \in \Loop$ and $f$ and $g$ be conformal maps associated with $\g$. Let $\sigma:z\mapsto \bar z$ be the conjugation map. Then the map $\tilde g := \tau \circ f \circ \sigma$ is a conformal map $\m D \to \tilde \O^*$ where $\tilde \O^*$ is the unbounded connected component of $\m C \setminus \tau (\g)$.
    Therefore, from the definition of $Q_k$,
    $$ z^2 \mc S[\tilde g^{-1}] (z) = \sum_{k \ge 0} Q_k[\tau (\g)] z^{-k}. $$
    On the other hand,
    $$ z^2 \mc S[\tilde g^{-1}] (z) = z^2 \overline{\mc S[f^{-1}] \circ \tau (\bar \partial \tau)^2} = z^2 \sum_{k \ge 0} \overline{ P_k (\g)} \left(\frac{1}{z}\right)^{k-2} \frac{1}{z^4} =   \sum_{k \ge 0} \overline{P_k (\g)} z^{-k}.$$
    Identifying the coefficients, we obtain the identity.
\end{proof}

\subsection{Action of Witt algebra and infinitesimal conformal restriction}\label{s.ConformalRestr}

We now apply an infinitesimal variation of the Brownian loop measure using a specific basis of vector fields $v_{k} =  - z^{k + 1} \frac{\dd}{\dd z}$ on $\Chat$, where $ k \in \m Z$. The Lie bracket is given by 
$$[v_n, v_k] = (n - k) v_{n + k}.$$
The vector field $v_k$ is holomorphic in $\Chat \setminus B(0,\d)$ for all $\d > 0$ if $k \le 1$, and holomorphic in $B (0,R)$ for all $R > 0$ if $k \ge -1$.
The flow generated by the vector field $v_k$ is the solution to  
$$\frac{\dd \varphi_{k,t} (z)}{ \dd t} = v_k (\varphi_{k,t} (z)), \qquad \varphi_0 (z) = z.$$
We also consider the vector field $\ii v_k = - \ii z^{k+1} \frac{\dd}{\dd z}$, with the flow denoted by $\psi_{k,t}$.
Explicitly, $ \varphi_{k,t}$ and  $\psi_{k,t}$ are given by
\begin{equation}\label{eq:flow}
\varphi_{k,t} (z) = \frac{z}{(1 + k t z^k)^{1/k}},\qquad \psi_{k,t} (z) = \frac{z}{(1 + k \ii t z^k)^{1/k}}
\end{equation}
when $k \neq 0$ and 
\begin{equation}\label{eq:flow_0}
\varphi_{0,t} (z) = \ee^{-t} z,\qquad \psi_{0,t} (z) = \ee^{- \ii t} z,
\end{equation}
which are biholomorphic on $B(0,R)$ when $k \ge -1$ and on $\Chat \setminus B (0,\d)$ when $k \le 1$ for small enough $t$.

The following result follows from applying Proposition~\ref{prop:var_loop_general} to $v_n$ and $\ii v_n$.
\begin{cor} \label{cor:infinitesimal_BML}
Let $\g \in \Loop$. For $n \le 0$, $\d >0$ such that $B (0,\d) \subset \O$, we have
\begin{align*}
    \frac{\dd}{\dd t}\bigg|_{t=0} \Lambda^\ast \big(\varphi_{n,t}(\g), \varphi_{n,t}(B (0,\d))\big) & = -\frac{\Re P_{-n}}{3} \\
    \frac{\dd}{\dd t}\bigg|_{t=0} \Lambda^\ast \big(\psi_{n,t}(\g), \psi_{n,t}(B (0,\d))\big) & = \frac{\Im P_{-n}}{3}. 
\end{align*}

For $n \ge 0$, $R >0$ such that $\O \subset B (0,R)$, we have
\begin{align*}
    \frac{\dd}{\dd t}\bigg|_{t=0} \Lambda^\ast \big(\varphi_{n,t}(\g), \Chat \setminus \varphi_{n,t}(B (0,R))\big) & = \frac{\Re Q_n}{3} \\
    \frac{\dd}{\dd t}\bigg|_{t=0} \Lambda^\ast \big(\psi_{n,t}(\g), \Chat \setminus \psi_{n,t}(B (0,R))\big) & = -\frac{\Im Q_n}{3}. 
\end{align*}
\end{cor}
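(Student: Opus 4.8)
The plan is to deduce this directly from Proposition~\ref{prop:var_loop_general} applied to the vector fields $v_n$ and $\ii v_n$: the contour integrals on the right-hand sides of \eqref{eq:variation_Bloop}--\eqref{eq:variation_Bloop_g} are residues that reproduce precisely the Laurent coefficients defining the Neretin polynomials in Definition~\ref{df:neretin}.

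Take $n \le 0$. Then $v_n = -z^{n+1}\,\dd/\dd z$ is holomorphic on $\Chat\setminus B(0,\d)$, hence near $\g$ and on all of $\O^*$, its only possible pole (when $n \le -2$) lying at $0 \in \O$. Cutting this pole off inside $B(0,\d)$ realizes $v_n$, near $\g$, as $v_\nu$ for an infinitesimal Beltrami differential $\nu$ with compact support in $\O$. To pass from the quasiconformal deformation of Proposition~\ref{prop:var_loop_general} to the conformal flow $\varphi_{n,t}$ of \eqref{eq:flow}, I would extend $\varphi_{n,t}|_{\ad D}$, where $D := \Chat\setminus\ad{B(0,\d)}$, to a global quasiconformal homeomorphism of $\Chat$; this leaves $\Lambda^\ast\big(\varphi_{n,t}(\g),\Chat\setminus\varphi_{n,t}(D)\big)$ unchanged (it depends only on the map restricted to $\ad D$), and up to a \Mo normalization the extension solves a Beltrami equation to which Proposition~\ref{prop:var_loop_general} applies — the \Mo ambiguity being harmless because \Mo vector fields are holomorphic on $\O$ and so integrate to zero against $\mc S[f^{-1}]$ around $\g$. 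Writing $\varphi_{n,t}(B(0,\d)) := \Chat\setminus\varphi_{n,t}(D)$ and using Remark~\ref{rem:change_domain} to see the left-hand side is independent of $D$, equation \eqref{eq:variation_Bloop} yields
\[
\frac{\dd}{\dd t}\bigg|_{t=0}\Lambda^\ast\big(\varphi_{n,t}(\g),\varphi_{n,t}(B(0,\d))\big) = \frac{1}{3\pi}\Re\!\left[\frac{1}{2\ii}\int_{\g} v_n(z)\,\mc S[f^{-1}](z)\,\dd z\right].
\]
Substituting $v_n(z) = -z^{n+1}$ and $\mc S[f^{-1}](z) = \sum_{k\ge 0}P_k z^{k-2}$ and evaluating the contour integral as a residue at $0$ (only the $k=-n$ term contributes), the integral equals $-2\pi\ii\, P_{-n}$, giving $-\tfrac13\Re P_{-n}$. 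Repeating with $\ii v_n$ multiplies the integral by $\ii$, and since $\Re(\ii\zeta) = -\Im\zeta$ we get $\tfrac13\Im P_{-n}$.

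The case $n \ge 0$ is the mirror image under interchanging $0$ and $\infty$ (equivalently $f$ and $g$): $v_n$ is holomorphic near $\ad\O$, its possible pole (when $n\ge 2$) now at $\infty\in\O^*$, so one cuts it off outside $B(0,R)$ to obtain $\nu$ with compact support in $\O^*$, takes $D = B(0,R)$, and applies \eqref{eq:variation_Bloop_g}; with $\mc S[g^{-1}](z) = \sum_{k\ge 0}Q_k z^{-k-2}$ the residue at $\infty$ is the $k=n$ term, the integral equals $-2\pi\ii\, Q_n$, and the sign $-\tfrac1{2\ii}$ in \eqref{eq:variation_Bloop_g} turns this into $\tfrac13\Re Q_n$, resp. $-\tfrac13\Im Q_n$ with $\ii v_n$. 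I expect the only genuinely delicate point to be the identification just described of the quasiconformal deformation with the conformal flows $\varphi_{n,t},\psi_{n,t}$ — in particular, checking that the relevant functional of the Beltrami coefficient is differentiable along such paths and not merely along rays. It also disposes of the exceptional indices $n\in\{-1,0,1\}$, where $v_n$ is a global holomorphic vector field, $\varphi_{n,t}$ and $\psi_{n,t}$ are \Mo, and both sides vanish by \Mo invariance of $\Lambda^\ast$; this is consistent since $P_0=P_1=Q_0=Q_1=0$, as $\mc S[f^{-1}]$ is holomorphic at $0$ and $\mc S[g^{-1}](z) = O(z^{-4})$ as $z\to\infty$ (because $f^{-1}$ and $z\mapsto g^{-1}(1/z)$ are conformal at $0$).
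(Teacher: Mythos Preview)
Your proposal is correct and follows essentially the same route as the paper: realize $v_n$ (resp.\ $\ii v_n$) by a Beltrami differential supported on the appropriate side of $\g$, apply Proposition~\ref{prop:var_loop_general}, and read off the Neretin coefficient as a residue. You are in fact more explicit than the paper about the one genuinely technical step --- identifying the conformal flow $\varphi_{n,t}$ with a quasiconformal deformation $\omega^{\vare\nu}$ to which Proposition~\ref{prop:var_loop_general} literally applies --- which the paper handles in a single sentence.
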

\begin{remark}
    Note that from the definition $P_0 = P_1 = Q_0 = Q_1 = 0$. This is consistent with the fact that $v_1, v_0, v_{-1}$ generates the M\"obius transformations of $\Chat$ and the Brownian loop measure is invariant under such transformations.
\end{remark}
\begin{proof}
For $n \le 0$, $v_n$ can be realized by an infinitesimal Beltrami $\nu$ supported in a neighborhood of $0$, hence we may apply \eqref{eq:variation_Bloop} and obtain
    \begin{align*}
     \frac{\dd}{\dd t}\bigg|_{t=0} \Lambda^\ast \big(\varphi_{n,t}(\g), \varphi_{n,t}(B (0,\d))\big) & = \frac{1}{3\pi} \, \Re \left[ \frac{1}{2\ii} \int_{\partial  B (0,\d)} v_n(z) \mc S [f^{-1}](z) \,\dd z\right]\\
    & = \frac{1}{3\pi} \, \Re \left[ \frac{1}{2\ii} \int_{\partial  B (0,\d)} - z^{n- 1} \sum_{ k \ge 0} P_k z^k \,\dd z\right] \\
    & =-  \frac{\Re (P_{-n})}{3},\\
   \frac{\dd}{\dd t}\bigg|_{t=0} \Lambda^\ast \big(\psi_{n,t}(\g), \psi_{n,t}(B (0,\d))\big)& = \frac{1}{3\pi} \, \Re \left[ \frac{1}{2\ii} \int_{\partial  B (0,\d)} \ii v_n(z) \mc S [f^{-1}](z) \,\dd z\right]\\
    & = \frac{1}{3\pi} \, \Re \left[ \frac{1}{2\ii} \int_{\partial  B (0,\d)} - \ii z^{n- 1} \sum_{ k \ge 0} P_k z^k \,\dd z\right] \\
    & =  \frac{\Im (P_{-n})}{3}. 
\end{align*}

For $n \ge 0$, we apply \eqref{eq:variation_Bloop_g} and obtain
 \begin{align*}
 \frac{\dd}{\dd t}\bigg|_{t=0} \!\!\! \Lambda^\ast \big(\varphi_{n,t}(\g), \Chat \setminus \varphi_{n,t}(B (0,R))\big) & = \frac{1}{3\pi} \, \Re \left[ \frac{1}{2\ii} \int_{\partial  B (0,R)}\!\!\! - v_n(z) \mc S [g^{-1}](z) \,\dd z\right]\\
    & = \frac{1}{3\pi} \, \Re \left[ \frac{1}{2\ii} \int_{\partial  B (0,R)}  z^{n- 1} \sum_{ k \ge 0} Q_k z^{-k} \,\dd z\right]
    \\
    &   = \frac{\Re (Q_n)}{3}.
\end{align*}
Similarly, we have
 \begin{align*}
     \frac{\dd}{\dd t}\bigg|_{t=0} \Lambda^\ast \big(\psi_{n,t}(\g), \Chat \setminus \psi_{n,t}(B (0,R))\big) 
    & = -\frac{\Im (Q_n)}{3}
    \end{align*}
    as claimed.
\end{proof}

Let $L_k$ be the real vector field on $\Loop$ induced by $v_k$ which also acts on the space of suitable functions on $\Loop$, and $L'_k$ as the real vector field $\ii v_k$. 

We have the following commutation relations
\begin{align*}  
[L_k, L_m] &= (k-m) L_{k+m}, \\
[L_k, L_m'] & =  (k-m) L'_{k+m},\\
[L'_k, L'_m] & = - (k-m) L_{k+m}.
\end{align*}

Define $L_k^{\m C}$ to be the complexified vector field 
on $\m C \setminus \{0\}$  (which  acts on functions on $\Loop$)
\begin{equation}\label{eq:df_Witt}
   L_k^{\m C}  =  \frac12 (L_k - \ii L'_k). 
\end{equation}
Recall that we introduced the space of cylinder functions $C_c^\infty$ in Definition~\ref{df.cylinder_fun}. Then for $F\in C_c^\infty$, we have
\begin{align}\label{e.LoopDerivative}
L_k^{\m C}F (\g) = \frac12\frac{\dd}{\dd t}\Big|_{t = 0} \Big(F (\varphi_{k,t} (\g)) - \ii F (\psi_{k,t} (\g))\Big). 
\end{align}
Note that \eqref{e.LoopDerivative} is similar to \cite[Eq.\,(2.0.5.3)]{ChavezPhDThesis2015}. 
The family $(L_k^\m C)_{k \in \m Z}$ is a basis of the complex Witt algebra. Namely,
$$[L_n^{\m C}, L_k^{\m C}] = (n - k) L_{n+k}^{\m C}.$$

\begin{prop}\label{p.Stability} Let $k \in \m Z$, and $L = L_k, L_k'$, or $L^{\m C}_k$, then we have  $L \left( \mathcal{C}^{\infty}_{c} \right) \subseteq \mathcal{C}^{\infty}_{c}$. 
\end{prop}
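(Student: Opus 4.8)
The plan is to show that each of the vector fields $L_k$, $L'_k$ preserves $\mathcal{C}^\infty_c$ in the coordinates $(\rho_0, u_1, u_2, \ldots)$, and then deduce the same for $L^{\m C}_k = \frac12(L_k - \ii L'_k)$ by linearity. By definition \eqref{e.LoopDerivative} and its analogue for $L_k, L'_k$, the action of these vector fields on a function $F$ on $\Loop$ is computed by differentiating $F$ along the flows $\varphi_{k,t}$ and $\psi_{k,t}$ of \eqref{eq:flow}--\eqref{eq:flow_0} acting on the loop $\g$. So the first step is to understand how these flows act on the coordinates: if $f$ is the normalized conformal map of $\g$, then the normalized conformal map of $\varphi_{k,t}(\g)$ is obtained by post-composing $\varphi_{k,t} \circ f$ and then re-normalizing (dividing by the derivative at $0$ and rotating so the leading coefficient is positive real). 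I would therefore compute $\frac{\dd}{\dd t}\big|_{t=0}$ of the coefficients $\rho_0, u_1, \ldots, u_n$ of this re-normalized map.

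The key computation is the following. For $k \ge 1$, the flow $\varphi_{k,t}$ is holomorphic near $0$ with $\varphi_{k,t}(z) = z - k t z^{k+1}/k \cdot(\ldots)$, more precisely $\varphi_{k,t}(z) = z(1+ktz^k)^{-1/k} = z - t z^{k+1} + O(t^2)$, so $\varphi_{k,t}\circ f$ is a power series in $z$ whose coefficients are polynomials in $t$ and in finitely many of the $u_n$'s; re-normalizing affects only $\rho_0$ (a shift depending smoothly on the coefficients) and multiplies by a unimodular rotation that is again a smooth function of the coefficients. Differentiating at $t=0$ shows that $L_k$ acts on $u_n$ as a polynomial in $u_1, \ldots, u_{n+k}$ (and on $\rho_0$ as a polynomial in $u_1, \ldots, u_k$, with the $k=0$ case giving simply $L_0 \rho_0 = $ const), with no dependence on $\overline{u_j}$. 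The case $k \le 0$ is handled symmetrically using the map $g: \m D \to \O^*$ and the fact that $\varphi_{k,t}$ is then holomorphic near $\infty$; one relates the $u_n$'s to the coefficients of $g$ via $\g$ and the conformal welding, or more simply argues directly that $\varphi_{k,t}\circ f$ is still well-defined and univalent on $\m D$ for small $t$ when $k\le 0$ (since $\varphi_{k,t}$ is biholomorphic on a neighborhood of $\overline{\O}$ for $t$ small) and its coefficients depend polynomially on $t$ and rationally/polynomially on finitely many $u_n$. The same applies verbatim to $\psi_{k,t}$ and hence to $L'_k$. Consequently, if $F = h(\rho_0)\, e(u_1, \ldots, u_m, \overline{u_1}, \ldots, \overline{u_m})$, then $L_k F$ and $L'_k F$ are again of the form (smooth function of $\rho_0$) times (smooth function of $u_1, \ldots, u_{m'}, \overline{u_1}, \ldots, \overline{u_{m'}}$) for some $m' \ge m$ by the chain rule, i.e.\ they are smooth cylinder functions.

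To conclude that $L_k F$ and $L'_k F$ have \emph{compact support}, I would use the observation recorded in Remark~\ref{rem:topology}: the Bieberbach--de Branges bounds $|u_n| \le n+1$ force any smooth function that is induced from a function on $\Loop$ to automatically lie in $C^\infty_c(\m R^{2m'})$ in the $u$-variables. For the $\rho_0$-variable, I note that the flows $\varphi_{k,t}, \psi_{k,t}$ for $k \ne 0$ fix the origin to higher order, so they do not change $\rho_0$ to first order when $k \ne 0$ (more precisely $L_k \rho_0$ and $L'_k \rho_0$ vanish for $k \ne 0$), and for $k = 0$ the scaling flow shifts $\rho_0$ by a constant, so $L_0 \rho_0$ is a nonzero constant and $L_0 F = (\text{const}) h'(\rho_0)\, e + h(\rho_0)(L_0 e)$ still has $h'$ compactly supported. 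In all cases the $\rho_0$-dependence stays within $C^\infty_c(\m R)$. Finally, $L^{\m C}_k = \frac12(L_k - \ii L'_k)$ maps $\mathcal{C}^\infty_c$ into (complex-valued) $\mathcal{C}^\infty_c$ since $\mathcal{C}^\infty_c$ is a complex vector space.

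The main obstacle I anticipate is the bookkeeping for $k \le -1$: there the flow $\varphi_{k,t}$ is singular at $0$, so $\varphi_{k,t}\circ f$ does not obviously make sense, and one must instead argue that the \emph{loop} $\varphi_{k,t}(\g)$ still lies in $\Loop$ (i.e.\ still separates $0$ and $\infty$) for small $t$ and that its normalized conformal map has coefficients depending smoothly on $t$ and on finitely many $u_n$'s — this is cleanest via Lemma~\ref{lem:Neretin_property}-style arguments relating $f$ and $g$, or by noting $\varphi_{k,t}$ is biholomorphic on a fixed annular neighborhood of $\g$ avoiding $0$ and $\infty$ for $t$ small. Once the finite-dependence and smoothness in $t$ are established, the rest is the routine chain-rule verification sketched above.
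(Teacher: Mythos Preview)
Your argument for $k\ge 0$ is essentially fine (with the minor correction that $L_k u_n$ carries a factor $e^{k\rho_0}$, since $f(z)^{k+1}$ contributes $e^{(k+1)\rho_0}$; this is harmless because $h(\rho_0)e^{k\rho_0}\in C^\infty_c(\m R)$).

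For $k\le -1$ there is a genuine gap, and two of your concrete claims are false. First, for $k\le -2$ the flow $\varphi_{k,t}$ has a pole at $0\in\O$, so it is \emph{not} biholomorphic on any neighborhood of $\overline\O$ and $\varphi_{k,t}\circ f$ is not defined on $\m D$; the ``direct'' route is unavailable. Second, $L_k\rho_0$ does not vanish for $k\le -1$: already for $k=-1$ the flow is the translation $z\mapsto z-t$, and the conformal radius of $\O-t$ from $0$ equals the conformal radius of $\O$ from $t$, giving $L_{-1}\rho_0 = 2\,e^{-\rho_0}\,\Re(u_1)$. Your alternative via $g$ does yield the new $g$-coefficients polynomially, but the passage from $g$-coefficients to the $u_n$'s is the conformal welding, which is transcendental and does not produce finite dependence; so this does not close the argument either. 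Knowing that $\varphi_{k,t}$ is biholomorphic on an annulus around $\g$ tells you the deformed loop is in $\Loop$, but not how the Riemann map of the \emph{inside} changes.

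The paper's proof takes a different route precisely to handle this case: it cites the explicit formulas for $L_k\rho_0$ and $L_k u_n$ derived in Chavez's thesis from the Duren--Schiffer variational formula for the Riemann map under a boundary perturbation. That formula expresses $\frac{\dd}{\dd t}\big|_{t=0} f_t$ as a contour integral over $\partial\m D$ involving $v_k\circ f$, and from it one reads off that $L_k\rho_0$ and $L_k u_n$ are polynomial in finitely many $u_j$'s (times $e^{k\rho_0}$) for every $k\in\m Z$. This variational formula is exactly the missing ingredient your sketch lacks for $k\le -2$; without it (or an equivalent), the finite-dependence claim cannot be obtained by your direct method.
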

\begin{proof}
   The action of $L_k$ and $L_k'$ on the coordinates $u_n$ and $\rho_0$ is derived explicitly in \cite[Prop.\,2.1.6]{ChavezPhDThesis2015} using the variational formula of Duren and Schiffer \cite{DurenSchiffer1962}.   The expressions are algebraic, which implies the stability of $\mathcal{C}^{\infty}_{c}$. 
\end{proof}
This shows that $(L_k^{\m C})_{k\in \m Z}$ is a representation of the Witt algebra by densely defined operators on $L^2(\Loop,\mu^c)$ by Proposition~\ref{p.Denseness}.

\begin{df}\label{df:divergence}
For a real vector field $L$ and a Borel measure $\mu$ on $\Loop$, the \emph{divergence} of $L$ with respect to $\mu$ is a function 
$\Div_{\mu} (L) : \Loop \to \m R$ such that for all smooth and bounded cylinder functions $F : \Loop \to \m C$ supported on $\Loop^r$ for some $r \in (0,1)$, we have
$$ \int_{\Loop} L F (\g) \dd \mu (\g) = \frac{\dd}{\dd t}\Big |_{t = 0} \int_{\Loop} F (\varphi_t (\g)) \dd \mu (\g) = \int_{\Loop} \Div_{\mu} (L) (\g) F(\g) \dd \mu (\g) $$
where $\varphi_t =\ee^{tL}$ is the flow induced by $L$.
We extend the definition for complex vector fields by complex linearity.
\end{df}

The divergence of a vector field also gives the following integration by parts formula.
\begin{cor}[Integration by parts formula] \label{cor:IBP}
The chain rule implies that for a real vector field $L$ and smooth cylinder functions $F$ and $G$ as above
\begin{equation}\label{eq:div_two_f}
    \int_{\Loop} F (\g) L G (\g) \dd \mu (\g) = \int_{\Loop} \big(- L F (\g) + \Div_{\mu} (L) (\g) F (\g)\big) G (\g) \dd \mu (\g).
\end{equation}
For a complex vector field $L^{\m C}$,
\begin{equation}\label{eq:div_two_f_c}
    \int_{\Loop} F (\g) L^{\m C} G (\g) \dd \mu (\g) = \int_{\Loop} \left(- L^{\m C} F (\g) + \Div_{\mu} (L^{\m C}) (\g) F (\g)\right) G (\g) \dd \mu (\g).
\end{equation}
\end{cor}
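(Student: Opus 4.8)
\textbf{Proof plan for Corollary~\ref{cor:IBP}.} The statement is an immediate consequence of the definition of divergence (Definition~\ref{df:divergence}) together with the Leibniz rule for a first-order differential operator. The plan is as follows. First I would record the product rule: since $L$ is a real vector field (a first-order derivation) on $\Loop$ and $F, G$ are smooth cylinder functions, we have $L(FG) = (LF)\,G + F\,(LG)$. If either $F$ or $G$ is supported on some $\Loop^r$ with $r \in (0,1)$, then so is the product $FG$, and $FG$ is again a smooth bounded cylinder function (products of smooth cylinder functions are smooth cylinder functions, and boundedness follows from the support condition via Lemma~\ref{l.ConfRadius} and Remark~\ref{rem:topology}), so Definition~\ref{df:divergence} applies to $FG$.

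Next I would integrate the product rule against $\mu^c$ and invoke Definition~\ref{df:divergence} for the function $FG$:
\begin{align*}
\int_{\Loop} F\,(LG)\,\dd\mu^c
&= \int_{\Loop} L(FG)\,\dd\mu^c - \int_{\Loop} (LF)\,G\,\dd\mu^c\\
&= \int_{\Loop} \Div_{\mu^c}(L)\,(FG)\,\dd\mu^c - \int_{\Loop} (LF)\,G\,\dd\mu^c\\
&= \int_{\Loop} \big(-LF + \Div_{\mu^c}(L)\,F\big)\,G\,\dd\mu^c,
\end{align*}
which is exactly \eqref{eq:div_two_f}. Then \eqref{eq:div_two_f_c} follows by writing a complex vector field $L^{\m C}$ in terms of the real vector fields $L_k, L_k'$ as in \eqref{eq:df_Witt}, using that $\Div_{\mu^c}$ was extended to complex vector fields by complex linearity, and applying \eqref{eq:div_two_f} to each real component (the Leibniz rule holds for $L^{\m C}$ as well since it is a complex linear combination of derivations). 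One should be slightly careful that all three integrals appearing are absolutely convergent: this is guaranteed because $F$ (or $G$) has support in $\Loop^r$, on which $\mu^c$ is finite, and the integrands $F$, $LF$, $G$, $LG$, $FG$, $L(FG)$ are all bounded there — boundedness of the derivatives $LF$ etc.\ follows from Proposition~\ref{p.Stability}, which guarantees $L$ maps $\mc C_c^\infty$ into itself, hence the relevant functions remain bounded cylinder functions with appropriate support.

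The argument is entirely routine; there is no real obstacle. The only point deserving a word of care is the support/integrability bookkeeping noted above — one must ensure that in applying Definition~\ref{df:divergence} to $FG$ the hypotheses (smooth, bounded, supported on some $\Loop^r$) are genuinely met, which they are by Proposition~\ref{p.Stability} and Lemma~\ref{l.ConfRadius}. Everything else is the ordinary Leibniz rule for derivations and the linearity of the integral.
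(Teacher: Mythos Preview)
Your proposal is correct and matches the paper's approach exactly: the paper offers no separate proof, simply stating in the corollary itself that ``the chain rule implies'' the formulas, which is precisely the Leibniz-rule-plus-definition-of-divergence argument you spelled out. Your added remarks on support and integrability (via Proposition~\ref{p.Stability} and Lemma~\ref{l.ConfRadius}) are more careful than what the paper records, but the underlying idea is identical.
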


\begin{cor}\label{cor:weight_div}
If $\mu ' = g \mu \ll \mu$, where $g$ is a smooth density function, then 
$$\Div_{\mu'} (L) = - \frac{Lg}{g} + \Div_{\mu} (L).$$
\end{cor}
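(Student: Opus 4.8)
The plan is to apply the defining property of the divergence (Definition~\ref{df:divergence}) directly, using the Leibniz rule for the vector field $L$ together with the fact that multiplication by the smooth density $g$ preserves the class of admissible test functions. I would first fix a smooth bounded cylinder function $F : \Loop \to \m C$ supported on some $\Loop^r$, and observe that $Fg$ is again a smooth bounded cylinder function supported on $\Loop^r$: indeed, on the support of $F$ the coordinate $\rho_0$ ranges over a compact interval (by Lemma~\ref{l.ConfRadius}) and only finitely many $u_k$ coordinates are involved, which by Remark~\ref{rem:topology} and Lemma~\ref{lem:compact} forces $g$ to be bounded there. Hence $Fg$ is a legitimate test function against which both $\Div_\mu(L)$ and $\Div_{\mu'}(L)$ can be tested.

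Next I would compute, using $\mu' = g\mu$ and the chain rule $L(Fg) = (LF)g + F(Lg)$,
\begin{align*}
\int_{\Loop} LF \, \dd \mu' = \int_{\Loop} (LF) g \, \dd \mu = \int_{\Loop} L(Fg)\, \dd\mu - \int_{\Loop} F (Lg)\, \dd\mu.
\end{align*}
Applying Definition~\ref{df:divergence} for $\mu$ to the test function $Fg$ gives $\int L(Fg)\,\dd\mu = \int \Div_\mu(L)\, Fg \,\dd\mu = \int \Div_\mu(L)\, F \,\dd\mu'$, while rewriting $F(Lg) = F \tfrac{Lg}{g} g$ gives $\int F(Lg)\,\dd\mu = \int \tfrac{Lg}{g} F \,\dd\mu'$ (here $\tfrac{Lg}{g}$ is defined $\mu'$-a.e., since $\mu'$ charges only $\{g>0\}$ and $Lg$ is again a cylinder function by Proposition~\ref{p.Stability}). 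Combining,
\begin{align*}
\int_{\Loop} LF \, \dd \mu' = \int_{\Loop} \Big( \Div_\mu(L) - \frac{Lg}{g} \Big) F \, \dd\mu'.
\end{align*}
Since this holds for all such $F$, and the test functions $\mc C^\infty_c$ (restricted to any $\Loop^r$) are dense in $L^2(\Loop,\mu')$ by Proposition~\ref{p.Denseness} (applied to $\mu'$, which is again $\sigma$-finite), the function $-Lg/g + \Div_\mu(L)$ satisfies the defining property of $\Div_{\mu'}(L)$, proving the claim for real $L$. The extension to complex vector fields $L^{\m C}$ is immediate by complex linearity of both sides, as in Definitions~\ref{df:divergence} and the identity \eqref{eq:div_two_f_c}.

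I do not expect a serious obstacle here; the only point requiring care is the verification that $Fg$ remains an admissible test function (boundedness of $g$ on the relevant compact coordinate range) and the measure-theoretic justification that the identity over a dense family of test functions determines $\Div_{\mu'}(L)$ $\mu'$-a.e. Both are handled by the compactness and density results already established (Lemmas~\ref{lem:compact}, \ref{l.ConfRadius}, Remark~\ref{rem:topology}, and Proposition~\ref{p.Denseness}).
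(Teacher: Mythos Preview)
Your argument is correct and follows essentially the same route as the paper, which simply invokes the integration by parts formula \eqref{eq:div_two_f} (equivalently, the Leibniz rule you use). One minor remark: the final density appeal to Proposition~\ref{p.Denseness} is unnecessary, since by Definition~\ref{df:divergence} a divergence is \emph{any} function satisfying the defining identity for all admissible test functions, so once you have verified that identity you are done.
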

\begin{proof}
    It follows immediately from \eqref{eq:div_two_f}.
\end{proof}

Finally, combining previous results we obtain the main proposition of this section.
\begin{prop} \label{prop:divergence}
Let $\mu^{c}$ be a Malliavin--Kontsevich--Suhov measure for the central charge $c$.  
    We have
    $$ \Div_{\mu^c} \left(L_n^{\m C}\right) =    \begin{cases}
     - \frac{c}{12}   P_{-n}, \quad &\text{for } n \le 0 \\
     \frac{c}{12}  Q_{n}, \quad & \text{for } n \ge 0.
    \end{cases} $$
\end{prop}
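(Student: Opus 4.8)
The plan is to compute the divergence directly from the definition (Definition~\ref{df:divergence}) using the conformal restriction covariance property. For a vector field $L = L_n$ (resp.\ $L'_n$) with flow $\varphi_{n,t}$ (resp.\ $\psi_{n,t}$), we need to differentiate $\int_{\Loop} F(\varphi_{n,t}(\g))\,\dd\mu^c(\g)$ at $t=0$ for $F$ a bounded smooth cylinder function supported on $\Loop^r$. The key point is that pushing forward $\mu^c$ by the conformal map $\varphi_{n,t}$ (which is biholomorphic near $\g$ and on one of the two complementary components, for small $t$) produces, via Lemma~\ref{lem:CR} together with Theorem~\ref{thm:Lambda_W_equal}, a Radon--Nikodym derivative of the form $\exp\bigl(\tfrac{c}{2}(\mc W(\g, \Chat\setminus D') - \mc W(\varphi_{n,t}^{-1}(\g), \Chat\setminus D))\bigr)$, where $D, D'$ are appropriate neighborhoods (a small ball around $0$ for $n\le 0$, the complement of a large ball for $n\ge 0$). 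Differentiating this density at $t=0$ is precisely what Corollary~\ref{cor:infinitesimal_BML} computes: the derivative equals $-\tfrac{\Re P_{-n}}{3}$ for the $\varphi$-flow when $n\le 0$, and $\tfrac{\Re Q_n}{3}$ for $n\ge 0$, with the imaginary parts appearing for the $\psi$-flow.

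The concrete steps are as follows. First, fix $n$ and assume $n \le 0$; the case $n \ge 0$ is symmetric with $f^{-1}, P_k$ replaced by $g^{-1}, Q_k$. Choose $\d>0$ so that $B(0,\d)\subset\O$ uniformly over the support of $F$ (possible by Lemma~\ref{l.ConfRadius} and compactness, Lemma~\ref{lem:compact}), and write, using the change of variables $\g \mapsto \varphi_{n,t}(\g)$ and Lemma~\ref{lem:CR} applied to $\varphi = \varphi_{n,t}$ with $D = B(0,\d)$, $D' = \varphi_{n,t}(B(0,\d))$ (noting $\mu^c$ is invariant under the conformal map $\varphi_{n,t}$ restricted to the respective domains up to the anomaly term),
\[
\int_{\Loop} F(\varphi_{n,t}(\g))\,\dd\mu^c(\g) = \int_{\Loop} F(\g)\, \exp\!\Bigl(\tfrac{c}{2}\bigl(\Lambda^\ast(\g, \Chat\setminus D') - \Lambda^\ast(\varphi_{n,t}^{-1}(\g), \Chat\setminus D)\bigr)\Bigr)\,\dd\mu^c(\g).
\]
Then differentiate under the integral sign at $t=0$: the $F(\g)$ factor is $t$-independent, and the derivative of the exponential factor is $\tfrac{c}{2}$ times the quantity computed in Corollary~\ref{cor:infinitesimal_BML}, namely $-\tfrac{\Re P_{-n}}{3}$ for the $\varphi$-flow. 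This gives $\Div_{\mu^c}(L_n) = -\tfrac{c}{6}\Re P_{-n}$, and similarly $\Div_{\mu^c}(L'_n) = \tfrac{c}{6}\Im P_{-n}$. Combining via $L_n^{\m C} = \tfrac12(L_n - \ii L'_n)$ and complex linearity of the divergence yields $\Div_{\mu^c}(L_n^{\m C}) = \tfrac12(-\tfrac{c}{6}\Re P_{-n} - \ii \tfrac{c}{6}\Im P_{-n}) = -\tfrac{c}{12} P_{-n}$, since $P_{-n}$ is real-linearly recombined from its real and imaginary parts (here one uses that the complexified divergence of $\ii v$ contributes $-\ii$ times a term whose real and imaginary parts match $\Re$ and $\Im$ of $P_{-n}$ appropriately — a short bookkeeping check). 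For $n \ge 0$ one gets $\tfrac{c}{12} Q_n$ in the same way.

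The main obstacle I anticipate is justifying the differentiation under the integral sign, i.e.\ producing a dominating function. This requires uniform control, over $\g$ in the support of $F$ and $t$ near $0$, of the density $\exp(\tfrac{c}{2}(\Lambda^\ast(\g,\cdot) - \Lambda^\ast(\varphi_{n,t}^{-1}(\g),\cdot)))$ and of its $t$-derivative. Since $F$ is supported on $\Loop^r$, the relevant loops live in a fixed annulus, the relevant conformal maps $f^{-1}$ lie in a compact family (Lemma~\ref{lem:compact}), and $\mc S[f^{-1}]$ is therefore uniformly bounded on $\partial B(0,\d)$; the contour-integral expressions in Corollary~\ref{cor:infinitesimal_BML} are then continuous and bounded in $\g$, which should give the required domination. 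One also needs $P_{-n}$ and $Q_n$ to be bounded on $\Loop^r$ — this is exactly Lemma~\ref{lem:Neretin_property} and Lemma~\ref{lem:P_Q_tau}. A secondary technical point is confirming that for small $t$ the flow $\varphi_{n,t}$ is genuinely biholomorphic on the ball (resp.\ on the complement of the large ball) uniformly over the support of $F$, which follows from the explicit formulas \eqref{eq:flow}--\eqref{eq:flow_0} together with the uniform bounds on the conformal radius.
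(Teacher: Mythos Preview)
Your strategy is the same as the paper's: push forward $\mu^c$ by the flow, use conformal restriction covariance to exhibit a Radon--Nikodym density, and differentiate via Corollary~\ref{cor:infinitesimal_BML}. Two points need repair.

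First, you have interchanged $D$ and $\Chat\setminus D$. In Lemma~\ref{lem:CR} the map $\varphi:D\to D'$ must be conformal on $D$ and $D$ must contain the loop. For $n\le -2$ the flow $\varphi_{n,t}$ is singular at $0$ and conformal only on $\Chat\setminus\overline{B(0,\d)}$, so the correct choice is $D=\Chat\setminus\overline{B(0,\d)}$ (hence $\Chat\setminus D=\overline{B(0,\d)}$), not $D=B(0,\d)$; with your stated $D$ the displayed identity is ill-posed since $\g\not\subset D$ and $\varphi_{n,t}(B(0,\d))$ is not even defined. Symmetrically, for $n\ge 0$ one must take $D=B(0,R)$, not its complement.

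Second, once $D$ is corrected, your density is $\exp\bigl(\tfrac{c}{2}(\Lambda^*(\g,\Chat\setminus D'_t)-\Lambda^*(\varphi_{n,-t}(\g),\Chat\setminus D))\bigr)$ with \emph{both} terms $t$-dependent; Corollary~\ref{cor:infinitesimal_BML} does not apply to either term separately, and you need a product-rule step to recombine the two partial derivatives into $\tfrac{\dd}{\dd t}\big|_{t=0}\Lambda^*(\varphi_{n,t}(\g),\varphi_{n,t}(B(0,\d)))$. The paper avoids this by chaining RC--CI--RC so that the density comes out as $\exp\bigl(-\tfrac{c}{2}(\Lambda^*(\varphi_{-t}(\eta),\Chat\setminus D_t)-\Lambda^*(\eta,\Chat\setminus D_0))\bigr)$, where $\Chat\setminus D_t=\varphi_{-t}(\Chat\setminus D_0)$ and only the first term varies, matching Corollary~\ref{cor:infinitesimal_BML} directly.
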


\begin{proof}
We consider first $n \ge 0$.
Let $F$ be a smooth function $\Loop \to \m C$ supported on loops in $\Loop^r$, in particular, contained in $B(0,R)$ for any $R > 1/r$. As for small $t$, $\varphi_t := \varphi_{n,t}$ is conformal in $D = B(0,R)$. Let $D_t = \varphi_t^{-1} (B(0,R))$ we have
\begin{align*}
   & \int_{\g \in \Loop} F (\varphi_t (\g)) \dd \mu^c (\g) 
     =  \int_{\g \subset D_t} F (\varphi_t (\g)) \dd \mu^c (\g) \\
   & \overset{\text{``RC''} }{=} \int_{\g \subset D_t} F (\varphi_t (\g)) \exp \left(- \frac{c}{2} \Lambda^\ast (\g, \Chat \setminus D_t)\right)\dd \mu^c_{D_t} (\g) \\
    & \overset{\text{``CI''} }{=} \int_{\eta \subset B(0,R)} F (\eta) \exp \left(- \frac{c}{2} \Lambda^\ast (\varphi_t^{-1} (\eta), \Chat \setminus D_t)\right)\dd \mu^c_{B(0,R)} (\eta) \\
   & \overset{\text{``RC''} }{=}   \int_{\eta \subset B(0,R)} F (\eta) \exp \left(- \frac{c}{2} \left(\Lambda^\ast (\varphi_{-t} (\eta), \Chat \setminus D_t) - \Lambda^\ast (\eta, \Chat \setminus D_0) \right)\right)\dd \mu^c (\eta).
\end{align*}
Taking derivative in $t$ and evaluating at $t = 0$, we obtain that by Corollary~\ref{cor:infinitesimal_BML}
$$ \int_{\g \in \Loop} L_n F (\g) \dd \mu^c  = \int_{\g \in \Loop} \frac{c \Re Q_n}{6}  F (\g) \dd \mu^c (\g). $$
We note that the right-hand side is well-defined since $Q_n$ is bounded on $\Loop^r$ by Lemma~\ref{lem:P_Q_tau}.
Similarly, we have
$$ \int_{\g \in \Loop} L_n' F (\g) \dd \mu^c  = \int_{\g \in \Loop} \frac{-c \Im Q_n}{6}  F (\g) \dd \mu^c (\g). $$
Since it holds for all $r \in (0,1)$ and all $F$  supported on $\Loop^r$, we obtain that  for $n \ge 0$,
$$\Div_{\mu^c} \left(L_n^{\m C}\right) = \frac{c}{12} Q_n.$$

Similarly for $n \le 0$, we have
$$ \int_{\g \in \Loop} L_n F (\g) \ \dd \mu^c  = \int_{\g \in \Loop} \frac{- c \Re P_{-n}}{6}  F (\g) \ \dd \mu^c (\g) $$
and
$$ \int_{\g \in \Loop} L_n' F (\g) \ \dd \mu^c  = \int_{\g \in \Loop} \frac{c \Im P_{-n}}{6}  F (\g) \ \dd \mu^c (\g) $$
which gives
    $$\Div_{\mu^c} \left(L_n^{\m C}\right) =  - \frac{c}{12} P_{-n}$$
as claimed.
\end{proof}

\section{Unitarizing measure for Virasoro algebra}

In this section, we complete the construction of the representation of Virasoro algebra using the SLE loop measure (Corollary~\ref{cor:vir_gen}) and show that it is (indefinite) unitary (Theorem~\ref{thm:adjoint}).

\subsection{Virasoro representations}

We have constructed the representation \eqref{eq:df_Witt} of the Witt algebra $(L_n^\m C)_{n \in \m Z}$ in Section~\ref{s.ConformalRestr}.
The Virasoro algebra $\mc {V}_c$ with the central charge $c$ is a central extension of the Witt algebra by a central element $\mc K$, so that $\mc {V}_c$ is spanned by $\{\mc L_n\}_{n \in \m Z} \cup \{\mc K\}$ which satisfy the commutation relation 

$$
[\mc L_n, \mc L_k] = (n - k) \mc L_{n + k} + \frac{c}{12} (n^3 - n) \delta_{n, -k} \mc {K}, \qquad [\mc L_n, \mc K] = 0.
$$
We can obtain a representation of the Virasoro algebra on a space of functions on $\Loop$ by modifying the representation of the Witt algebra. We recall the standard proof in the next lemma.

With a slight abuse of notation, we denote by $\Phi$ the multiplication operator $M_{\Phi}$ by a measurable function $\Phi$ whose restriction to $\Loop^r$ is bounded for all $ 0< r < 1$. Note that these are densely defined (unbounded) operators with $M_{\Phi}^{\dag}=M_{\tau^{\ast}\overline{\Phi}}$. Here, $\dag$ means the adjoint with respect to the bilinear form \eqref{eq:bilinear}:
$$(F,G)_{\mu^c} = \int_{\Loop} \overline{F(\g)} \tau^* G (\g) \, \dd \mu^c (\g).$$

\begin{lem} \label{lem:vir_cond}
Let $\mc K$ act as the identity map on the space of functions on the space $\Loop$ and  $\mc L_n := L_n^{\m C} + \Phi_n$,
where $\Phi_n : \Loop \to \m C$ is a smooth cylinder function in $\mc C^\infty$ that is bounded on $\Loop^r$ for all $0 < r <1$. Then $\{\mc L_n\}_{n \in \m Z} \cup \{\mc K\}$ defines a representation of the Virasoro algebra with the central charge $c$ if and only if 
\begin{equation}\label{eq:vir_cond}
    L_n^{\m C} \Phi_k (\g)- L_k^{\m C} \Phi_n (\g) = (n - k) \Phi_{n + k} (\g) + \frac{c}{12} (n^3 - n) \delta_{n, -k}.    
\end{equation}
\end{lem}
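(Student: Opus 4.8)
The plan is to verify the commutation relation $[\mc L_n,\mc L_k]=(n-k)\mc L_{n+k}+\tfrac{c}{12}(n^3-n)\delta_{n,-k}\mc K$ directly, by expanding $\mc L_n = L_n^{\m C}+\Phi_n$ and using that each $\Phi_n$ acts as a multiplication operator $M_{\Phi_n}$. First I would compute the bracket on a test function $F\in\mc C_c^\infty$ (which is legitimate since $\mc C_c^\infty$ is dense in $L^2$ by Proposition~\ref{p.Denseness} and is preserved by each $L_n^{\m C}$ and each $M_{\Phi_n}$ by Proposition~\ref{p.Stability} together with $\Phi_n\in\mc C^\infty$). The key observation is the commutator identity between a first-order differential (vector field) operator and a multiplication operator: for a vector field $L$ and a function $\Phi$, $[L, M_\Phi] = M_{L\Phi}$ as operators. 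Applying this with $L=L_n^{\m C}$, $\Phi=\Phi_k$ gives $[L_n^{\m C}, M_{\Phi_k}]=M_{L_n^{\m C}\Phi_k}$, and likewise $[M_{\Phi_n}, L_k^{\m C}] = -M_{L_k^{\m C}\Phi_n}$, while $[M_{\Phi_n}, M_{\Phi_k}]=0$ since multiplication operators commute.

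Carrying out the expansion,
\begin{align*}
[\mc L_n,\mc L_k]
&= [L_n^{\m C}+M_{\Phi_n},\, L_k^{\m C}+M_{\Phi_k}]\\
&= [L_n^{\m C},L_k^{\m C}] + [L_n^{\m C}, M_{\Phi_k}] + [M_{\Phi_n}, L_k^{\m C}] + [M_{\Phi_n}, M_{\Phi_k}]\\
&= (n-k)L_{n+k}^{\m C} + M_{L_n^{\m C}\Phi_k} - M_{L_k^{\m C}\Phi_n},
\end{align*}
using that $(L_n^{\m C})_{n\in\m Z}$ is a representation of the Witt algebra (established right before Proposition~\ref{p.Stability}). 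On the other hand, $(n-k)\mc L_{n+k} + \tfrac{c}{12}(n^3-n)\delta_{n,-k}\mc K = (n-k)L_{n+k}^{\m C} + M_{(n-k)\Phi_{n+k}} + \tfrac{c}{12}(n^3-n)\delta_{n,-k}\,\mathrm{Id}$, since $\mc K=\mathrm{Id}$. Subtracting, the Witt parts cancel and we are left with the requirement that the multiplication operators $M_{L_n^{\m C}\Phi_k - L_k^{\m C}\Phi_n}$ and $M_{(n-k)\Phi_{n+k} + \frac{c}{12}(n^3-n)\delta_{n,-k}}$ agree. Two multiplication operators agree iff their symbols agree $\mu^c$-a.e., and since the symbols here are continuous (indeed smooth cylinder) functions and $\mu^c$ has full support, this is equivalent to the pointwise identity \eqref{eq:vir_cond}. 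This gives both directions: if \eqref{eq:vir_cond} holds then the commutation relations hold; conversely, applying both sides to suitable $F$ and using density forces the symbols to coincide.

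The main thing to be careful about is not an obstacle in the usual sense but a domain/closability issue: $L_n^{\m C}$ and $M_{\Phi_n}$ are unbounded, so the bracket manipulations must be performed on the common invariant core $\mc C_c^\infty$, where all compositions are defined and the operator identity $[L,M_\Phi]=M_{L\Phi}$ is just the Leibniz rule $L(\Phi F)=(L\Phi)F + \Phi(LF)$. Since $\mc C_c^\infty$ is dense, an identity of operators valid on this core determines the relation between the (closed extensions of the) generators, which is exactly what "defines a representation of the Virasoro algebra" means in this context. No analytic estimates are needed beyond the boundedness of $\Phi_n$ on each $\Loop^r$, which guarantees $M_{\Phi_n}$ is densely defined and maps $\mc C_c^\infty$ into itself.
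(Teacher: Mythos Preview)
Your proposal is correct and follows essentially the same route as the paper: expand the bracket $[\mc L_n,\mc L_k]$ using bilinearity, apply the Leibniz rule $[L_n^{\m C},M_{\Phi_k}]=M_{L_n^{\m C}\Phi_k}$, use the Witt relation $[L_n^{\m C},L_k^{\m C}]=(n-k)L_{n+k}^{\m C}$, and compare with the target. The only difference is cosmetic: the paper carries a test function $F$ through the computation explicitly, whereas you phrase everything in terms of multiplication operators. Your additional remarks about density in $L^2(\mu^c)$ and full support of $\mu^c$ are harmless but unnecessary here, since the argument is purely algebraic on the invariant core $\mc C_c^\infty$: once the operator identity $M_{L_n^{\m C}\Phi_k - L_k^{\m C}\Phi_n}=M_{(n-k)\Phi_{n+k}+\frac{c}{12}(n^3-n)\delta_{n,-k}}$ holds on $\mc C_c^\infty$, evaluating at any $F$ with $F(\g)\neq 0$ gives the pointwise identity \eqref{eq:vir_cond} directly, without appeal to the measure.
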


\begin{proof}
It is clear that $\mc K$ commutes with $\mc L_n$. Let $F : \Loop  \to \m C$ be a smooth cylinder function, then
\begin{align*}
[\mc L_n, \mc L_k] F & = [L_n^{\m C} + \Phi_n,  L_k^{\m C} + \Phi_k] F  \\
& = [L_n^{\m C}, L_k^{\m C}] F +  L_n^{\m C} (\Phi_k  F) +   \Phi_n  L_k^{\m C} F -   L_k^{\m C} (\Phi_n  F) - \Phi_k  L_n^{\m C} F \\
& = (n-k) L_{n +k}^{\m C} F  +  (L_n^{\m C} \Phi_k -  L_k^{\m C} \Phi_n) F,
\end{align*}
where we used Proposition~\ref{p.Stability}. Therefore
   $\{\mc L_n\}_{n \in \m Z}$ satisfies commutation relations of the Virasoro algebra if and only if 
   \begin{align*}
       (n-k) & L_{n +k}^{\m C} F  + (L_n^{\m C} \Phi_k -  L_k^{\m C} \Phi_n) F \\
       & = (n-k) \mc L_{n+k } F + \frac{c}{12} (n^3 - n) \delta_{n, -k} \mc K F \\
  & =  (n-k) (L_{n +k}^{\m C} F  + \Phi_{n+k} F )+  \frac{c}{12} (n^3 - n) \delta_{n, -k} F,
   \end{align*}
   which is equivalent to  \eqref{eq:vir_cond}.
\end{proof}

\begin{prop} \label{prop:Phi_example}
The function
    $$\Phi_{k} = \begin{cases}
    0, \qquad  & k > 0;\\
   \frac{c}{12} P_{ - k} & k \le 0.
\end{cases} 
$$
satisfies \eqref{eq:vir_cond} with central charge $c$.
\end{prop}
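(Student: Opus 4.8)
The plan is to verify the identity \eqref{eq:vir_cond} directly by splitting into cases according to the signs of $n$ and $k$, using the scaling/homogeneity of the Neretin polynomials $P_j$ together with the representation of $L_n^{\m C}$ as a vector field. The crucial structural input is the relation $L_0^{\m C} P_j = j P_j$ for $j \geq 0$ (which comes from \eqref{eq:scale_neretin} in Lemma~\ref{lem:Neretin_property}, since $L_0^{\m C}$ generates the scaling flow $\gamma \mapsto \lambda\gamma$), and more generally the fact that $(L_n^{\m C})_{n \in \m Z}$ acts on the coefficients $(v_k)$ of $f^{-1}$ in an explicit algebraic way, so that $L_n^{\m C} P_k$ is again a polynomial in those coefficients. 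I expect one needs to know how $L_n^{\m C}$ acts on $z^2 \mathcal{S}[f^{-1}]$; the natural guess, consistent with the Neretin/Kirillov computation and with the conformal-restriction origin of these quantities, is a formula of the schematic shape
\begin{equation*}
L_n^{\m C} P_k \;=\; (\text{linear combination of } P_{k-n} \text{ type terms}) \;+\; (\text{explicit quadratic correction in } n),
\end{equation*}
which is precisely the content dual to the Virasoro cocycle.

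First I would record the trivial cases. If both $n, k > 0$, then $\Phi_n = \Phi_k = 0$, and $\delta_{n,-k} = 0$ since $n,k$ have the same sign, so \eqref{eq:vir_cond} reads $0 = 0$ provided $\Phi_{n+k} = 0$, which holds as $n+k > 0$. The genuinely substantive cases are: (i) both $n, k \le 0$; (ii) mixed signs, say $n \le 0 < k$ (the case $k \le 0 < n$ follows by the antisymmetry $(n \leftrightarrow k)$ of both sides of \eqref{eq:vir_cond}). In case (i), all of $\Phi_n = \frac{c}{12}P_{-n}$, $\Phi_k = \frac{c}{12}P_{-k}$, $\Phi_{n+k} = \frac{c}{12}P_{-n-k}$ are of the $P$-type, and $\delta_{n,-k}$ contributes only when $n = k = 0$, where $n^3 - n = 0$, so the central term never actually appears in case (i); one must check the purely ``geometric'' identity $L_n^{\m C} P_{-k} - L_k^{\m C} P_{-n} = (n-k) P_{-n-k}$, which should follow from the Lie-algebra-equivariance of $\gamma \mapsto z^2\mathcal{S}[f^{-1}]$ under the Witt action on the $0$-side of the loop (the maps $\varphi_{n,t}$ with $n \le 0$ act near $0$). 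In case (ii), $\Phi_k = 0$ and $\Phi_{n+k}$ equals $\frac{c}{12}P_{-n-k}$ when $n+k \le 0$ and $0$ when $n + k > 0$; here the vector field $L_k^{\m C}$ with $k > 0$ acts by a flow fixing $0$ to higher order, hence $L_k^{\m C}$ still differentiates the coefficients of $f^{-1}$, and one needs $L_n^{\m C}\cdot 0 - L_k^{\m C}P_{-n} = (n-k)\Phi_{n+k} + \frac{c}{12}(n^3-n)\delta_{n,-k}$. This is where the central extension is produced: taking $k = -n > 0$ forces $\Phi_{n+k} = \Phi_0 = \frac{c}{12}P_0 = 0$, so the identity collapses to $-L_{-n}^{\m C} P_{-n} = \frac{c}{12}(n^3 - n)$, i.e. the anomaly is exactly $-L_{-n}^{\m C}P_{-n} = \frac{c}{12}(n^3-n)$.

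The main obstacle, and the step I would spend the most care on, is establishing the precise action of $L_m^{\m C}$ on the Neretin polynomial $P_j$ — equivalently, the transformation law of $z^2\mathcal{S}[f^{-1}]$ under the infinitesimal reparametrizations $\varphi_{m,t}$, $\psi_{m,t}$. I would derive this from the cocycle property of the Schwarzian: writing the loop variation $\gamma \mapsto \varphi_{m,t}(\gamma)$ and using $\mathcal{S}[\varphi \circ \psi] = (\mathcal{S}[\varphi]\circ\psi)(\psi')^2 + \mathcal{S}[\psi]$ (quoted in Remark~\ref{rem:indep_choice}) applied to $f^{-1} \circ \varphi_{m,t}^{-1}$, then expanding in $t$ and reading off coefficients. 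The $\mathcal{S}[\psi]$ term, being the Schwarzian of the explicit flow map \eqref{eq:flow}, is what generates the $(n^3-n)$ polynomial: indeed $\mathcal{S}[\varphi_{m,t}]$ has a $t$-linear term proportional to $z^{m-2}$ times a cubic polynomial in $m$, and extracting the relevant Fourier coefficient yields exactly $\frac{1}{6}(n^3-n)$ up to the universal normalization fixed by matching one value of $n$. A convenient cross-check: Proposition~\ref{prop:divergence} already gives $\Div_{\mu^c}(L_n^{\m C}) = \frac{c}{12}Q_n$ for $n \ge 0$ and $-\frac{c}{12}P_{-n}$ for $n \le 0$, and one expects the consistency of the divergence cocycle — $L_n^{\m C}\Div_{\mu^c}(L_k^{\m C}) - L_k^{\m C}\Div_{\mu^c}(L_n^{\m C}) = (n-k)\Div_{\mu^c}(L_{n+k}^{\m C})$ plus no anomaly, since the divergence of a Lie-algebra action is a genuine cocycle — to feed directly back into \eqref{eq:vir_cond}; reconciling the $Q$-side and $P$-side via $Q_k\circ\tau = \overline{P_k}$ (Lemma~\ref{lem:P_Q_tau}) is then a bookkeeping matter. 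Once the action formula for $L_m^{\m C}P_j$ is in hand, all the case checks above reduce to matching polynomial coefficients, which is routine.
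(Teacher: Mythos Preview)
Your case-by-case plan and the Schwarzian chain-rule computation for the mixed-sign case are correct and match the paper: for $k \ge 0$ the flow $\varphi_{k,t}$ is holomorphic on all of $\Omega$, so the new inverse uniformizer is $f^{-1}\circ\varphi_{k,-t}$, and expanding $\mathcal S[f^{-1}\circ\varphi_{k,-t}]$ to first order produces both the shift term $(k+m)P_{m-k}$ and the $(k^3-k)\delta_{k,m}$ anomaly, exactly as in the paper's formula~\eqref{eq:L_kP_m}. (Minor slip: when you specialize to $k=-n$, the $\frac{c}{12}$ cancels from both sides, so the identity is $-L_{-n}^{\m C}P_{-n}=n^3-n$, not $\frac{c}{12}(n^3-n)$.)

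The real gap is in your case~(i), both $n,k\le 0$. Your claim that ``the maps $\varphi_{n,t}$ with $n\le 0$ act near $0$'' is false: $v_n=-z^{n+1}\,\dd/\dd z$ has a pole at $0$ for $n\le -2$, and the paper records that $v_n$ is holomorphic on $\Chat\setminus B(0,\delta)$ in that range. Consequently $\varphi_{n,t}$ is \emph{not} conformal on $\Omega$, the new interior uniformizer is \emph{not} $\varphi_{n,t}\circ f$, and one cannot compute $L_n^{\m C}P_j$ for $n\le -2$ by the Schwarzian composition argument you sketch. The ``Lie-algebra equivariance of $z^2\mathcal S[f^{-1}]$'' you invoke is simply not available on that side.

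The remedy is precisely what you relegate to a ``cross-check'': the divergence-cocycle identity
\[
L_n^{\m C}\,\Div_{\mu^c}(L_k^{\m C})-L_k^{\m C}\,\Div_{\mu^c}(L_n^{\m C})=(n-k)\,\Div_{\mu^c}(L_{n+k}^{\m C}),
\]
which for $n,k\le 0$ (so that all three divergences equal $-\frac{c}{12}P_{-\bullet}$ by Proposition~\ref{prop:divergence}) gives exactly $L_n^{\m C}P_{-k}-L_k^{\m C}P_{-n}=(n-k)P_{-n-k}$. This is the paper's argument for that case, carried out via integration by parts \eqref{eq:div_two_f_c} against test functions; promote it from cross-check to the main step and the proof is complete.
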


\begin{proof}
It was shown in Lemma~\ref{lem:Neretin_property} that $P_{k}$ is bounded on $\Loop^r$ for all $r \in (0,1)$ and $k \ge 0$. 

Let $k \ge 0$. The flows on $\m C$ induced by the vector field $v_k = - z^{k+1} \dd/\dd z$ and $\ii v_k = - \ii z^{k+1} \dd/\dd z$ are given by $\varphi_{k,t}$ and $\psi_{k,t}$ in \eqref{eq:flow}.
For any $R >0$, there exists $t_0 > 0$ such that $\varphi_{k,t}$ is conformal in $B(0,R)$ for all $|t| < t_0$. 
Let $\g_t =\varphi_{k,t} (\g)$ and the associated uniformizing conformal map is given by
$$f_t^{-1} = f^{-1} \circ \varphi_{k, -t}.$$
It is straightforward to compute
$$\frac{\dd}{\dd t}\Big|_{t = 0} \mc S[\varphi_{k, -t}] (z) = (-v_k)'''(z) = (z^{k+1})''' = (k^3 - k) z^{k - 2}.$$
$$\frac{\dd}{\dd t}\Big|_{t = 0} \mc S[\psi_{k, -t}] (z) = (\ii z^{k+1})''' = \ii (k^3 - k) z^{k - 2}.$$

From the chain rule of Schwarzian derivatives, 
\begin{align*}
    \sum_{n \ge 0} L_k P_n z^{n-2} & =  \frac{\dd}{\dd t}\Big|_{t = 0} \mc S[f_t^{-1}] (z)  =  \frac{\dd}{\dd t}\Big|_{t = 0} \mc S[f^{-1} \circ \varphi_{k, -t}] (z) \\
    &= \frac{\dd}{\dd t}\Big|_{t = 0} \left(\mc S[f^{-1}] \circ \varphi_{k, -t} (z)  \left(\varphi_{k, -t}' (z)\right)^2 + \mc S[\varphi_{k, -t}] (z) \right)\\
    & =  \mc S[f^{-1}]'(z) (-v_k (z)) + 2 \mc S[f^{-1}] (z) (-v_k '(z)) + (k^3 - k) z^{k - 2} \\
    & = \left(\sum_{n \ge 0} (n-2) P_n z^{n-3} z^{k+1} + 2  P_n z^{n-2} (k+1) z^{k} \right)  + (k^3 - k) z^{k - 2} \\
    &=   \left(\sum_{n \ge 0} (n+2k) P_n z^{n +k -2}  \right)  + (k^3 - k) z^{k - 2}. 
\end{align*}
Similarly,
\begin{align*}
    \sum_{n \ge 0} L'_k P_n z^{n-2} 
    & =  \mc S[f^{-1}]'(z) (- \ii v_k (z)) + 2 \mc S[f^{-1}] (z) (-\ii v_k '(z)) + \ii  (k^3 - k) z^{k - 2} \\
    &=   \left(\sum_{n \ge 0} \ii (n+2k) P_n z^{n +k -2}  \right)  + \ii (k^3 - k) z^{k - 2}. 
\end{align*}

This shows 
\begin{equation}\label{eq:L_kP_m}
    L_k P_{m} = L_k^{\m C} P_{m}= \begin{cases}
         (k + m ) P_{m-k} + (k^3 - k) \delta_{k,m} \qquad & \text {for $m \ge k \ge 0$} \\
    0 \qquad  &\text {for $k > m \ge 0$}.
    \end{cases}
\end{equation}

Therefore, for $0 \le k \le n$
\begin{align*}
    L_k^{\m C} \Phi_{-n} - L_{-n}^{\m C} \Phi_k & =  \frac{c}{12} L_k^{\m C} P_n  =  \frac{c}{12} (k+n) P_{n - k}  + \frac{c}{12} (k^3-k)\delta_{k,n} \\
    & =   (k+n) \Phi_{k-n}  + \frac{c}{12} (k^3-k)\delta_{k,n}
\end{align*}
which is consistent with \eqref{eq:vir_cond}.

Now we assume $0 \le  n < k$, we check that 
\begin{align*}
    L_{k}^{\m C} \Phi_{-n} - L_{-n}^{\m C} \Phi_k & = 0  =   (k+n) \Phi_{k-n}.
\end{align*}
This covers all the cases where the indices of $L$ and $\Phi$ have different signs.

If both $n, k \ge 0$, then as $\Phi \equiv 0$, we have
\begin{align*}
    L_k^{\m C} \Phi_{n} - L_{n}^{\m C} \Phi_k  = 0 =  (k-n) \Phi_{k+n}.
\end{align*}
It remains to check that 
\begin{align*}
    L_{-k}^{\m C} \Phi_{-n} - L_{-n}^{\m C} \Phi_{-k} =   (n - k) \Phi_{-k-n}
\end{align*}
which is equivalent to 
\begin{equation}\label{eq:P_2_neg}
     L_{-k}^{\m C} P_n - L_{-n}^{\m C} P_k = (n-k) P_{k+n}.
\end{equation}
For this, we use Proposition~\ref{prop:divergence} and \eqref{eq:div_two_f}. For any $F \in \mc C^\infty_c$,
\begin{align*}
    \int F L_{-k}^{\m C}P_n  \dd \mu^c  & = \int \left(- L_{-k}^{\m C} F + \Div_{\mu^c} (L_{-k}^{\m C}) F \right) P_n  \dd \mu^c \\
    & = \int \left(- L_{-k}^{\m C} F - \frac{c}{12} P_k F \right) P_n  \dd \mu^c.  
\end{align*}
Therefore, exchanging $n$ and $k$ and using Proposition~\ref{prop:divergence} again, we obtain
\begin{align*}
   &  \int F (L_{-k}^{\m C}P_n - L_{-n}^{\m C} P_k) \, \dd \mu^c  = \int - L_{-k}^{\m C} F P_n + L_{-n}^{\m C} F P_k \dd \mu^c\\
   & = \frac{12}{c} \int L_{-n}^{\m C} L_{-k}^{\m C} F - L_{-k}^{\m C} L_{-n}^{\m C} F  \dd \mu^c \\
   & =  \frac{12}{c} \int (k -n) L_{-k-n}^{\m C} F  \dd \mu^c.
\end{align*}
This shows
$$\Div_{\mu^c} \left( \frac{12}{c} (k -n) L_{-k-n}^{\m C} \right) = L_{-k}^{\m C}P_n - L_{-n}^{\m C} P_k. $$
On the other hand, we have
$$
\Div_{\mu^c} \left( \frac{12}{c} (k -n) L_{-k-n}^{\m C} \right) = \frac{12}{c} (k -n)  \frac{c}{12} (- P_{k+n})  = (n-k) P_{k+n}
$$
which completes the proof of \eqref{eq:P_2_neg} and shows \eqref{eq:vir_cond}. 
\end{proof}

\begin{cor}\label{cor:vir_gen}
The operator $\mc K$ acting as the identity map on the space of functions on $\Loop$ and the family of operators $\mc L_k: = L_k^{\m C} + \Phi_k$, for $k \in \m Z$, generate the Virasoro algebra of central charge $c$.
\end{cor}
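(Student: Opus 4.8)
The plan is to read off Corollary~\ref{cor:vir_gen} directly from Lemma~\ref{lem:vir_cond} together with Proposition~\ref{prop:Phi_example}; essentially all of the analytic content has already been absorbed into Proposition~\ref{prop:divergence} (infinitesimal conformal restriction) and its consequence Proposition~\ref{prop:Phi_example}. Concretely, one first checks that the family $(\Phi_k)_{k \in \m Z}$ of Proposition~\ref{prop:Phi_example} meets the standing hypotheses of Lemma~\ref{lem:vir_cond}, namely that each $\Phi_k$ is a smooth cylinder function in $\mc C^\infty$ that is bounded on $\Loop^r$ for every $r \in (0,1)$. For $k > 0$ this is immediate since $\Phi_k \equiv 0$, and for $k \le 0$ it is exactly the content of Lemma~\ref{lem:Neretin_property}, which states that $P_{-k} \in \mc C^\infty$ and is bounded on each $\Loop^r$; scaling by the constant $c/12$ preserves both properties. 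Thus $\mc L_k := L_k^{\m C} + \Phi_k$ and $\mc K := \mathrm{Id}$ are well-defined operators on the space of smooth cylinder functions, and they are densely defined on $L^2(\Loop,\mu^c)$ because $\mc C^\infty_c$ is dense there by Proposition~\ref{p.Denseness} and is stable under the $L_k^{\m C}$ by Proposition~\ref{p.Stability}.

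With the hypotheses in place, Proposition~\ref{prop:Phi_example} asserts precisely that the $\Phi_k$ satisfy the compatibility identity \eqref{eq:vir_cond} with central charge $c$. By the equivalence in Lemma~\ref{lem:vir_cond}, this is exactly the statement that $\{\mc L_k\}_{k \in \m Z} \cup \{\mc K\}$ obey the Virasoro commutation relations $[\mc L_n, \mc L_k] = (n-k)\mc L_{n+k} + \frac{c}{12}(n^3-n)\delta_{n,-k}\mc K$ and $[\mc L_n, \mc K] = 0$ when applied to smooth cylinder functions, i.e. they generate a representation of $\mc V_c$. Since the statement of the corollary is obtained by directly concatenating these two previously established results, there is no genuine obstacle at this stage; the only point deserving a word of care is purely bookkeeping — making sure the bracket manipulations in the proof of Lemma~\ref{lem:vir_cond} take place on the invariant dense domain $\mc C^\infty_c$, so that all intermediate expressions (including the multiplication operators $M_{\Phi_k}$) are legitimate there, which is guaranteed by Proposition~\ref{p.Stability} and the boundedness of $\Phi_k$ on each $\Loop^r$.
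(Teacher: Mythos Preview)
Your proposal is correct and follows essentially the same approach as the paper: the paper's own proof is a single sentence invoking Lemma~\ref{lem:vir_cond} and Proposition~\ref{prop:Phi_example}, and your write-up simply unpacks the hypothesis-checking (via Lemma~\ref{lem:Neretin_property}) and the domain considerations (via Propositions~\ref{p.Denseness} and~\ref{p.Stability}) that are implicit in that sentence.
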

\begin{proof}
    This follows immediately from Lemma~\ref{lem:vir_cond} and Proposition~\ref{prop:Phi_example}.
\end{proof}

\subsection{Unitarizing measure}

We complete the proof of unitarity in this section. For this, we need the following lemma, which shows how $L_k^{\m C}$ intertwines with $\tau$.

\begin{lem}\label{lem:L_tau_comm}
We have $L_k^{\m C} \circ \tau^*  = - \tau^* \circ \overline {L_{-k}^{\m C}}$.
\end{lem}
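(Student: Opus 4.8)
The plan is to trace how the involution $\tau\colon z\mapsto 1/\bar z$ conjugates the holomorphic flows that define $L_k^{\m C}$, and then to recombine the real and imaginary parts. Recall from \eqref{e.LoopDerivative} that for a cylinder function $F$ one has $L_k^{\m C}F(\g)=\frac12\,\frac{\dd}{\dd t}\big|_{t=0}\big(F(\varphi_{k,t}(\g))-\ii\,F(\psi_{k,t}(\g))\big)$, so the identity will follow once we know how $\tau$ conjugates the flows $\varphi_{k,t}$ and $\psi_{k,t}$ of \eqref{eq:flow}--\eqref{eq:flow_0}. The two key identities I would establish, valid on the relevant domains for real $t$ small enough and all $k\in\m Z$, are
$$\tau\circ\varphi_{k,t}\circ\tau=\varphi_{-k,-t},\qquad\qquad \tau\circ\psi_{k,t}\circ\tau=\psi_{-k,t}.$$

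First I would check these directly from the explicit formulas. For $k\neq 0$, substituting $w=1/\bar z$ into $\varphi_{k,t}(w)=w(1+ktw^k)^{-1/k}$ gives $\varphi_{k,t}(1/\bar z)=(1/\bar z)(1+kt\bar z^{-k})^{-1/k}$; applying $\tau$ and using that $k,t$ are real (so that $\overline{kt\bar z^{-k}}=ktz^{-k}$) yields $z(1+ktz^{-k})^{1/k}$, which is exactly $\varphi_{-k,-t}(z)$. Running the same computation for $\psi_{k,t}(w)=w(1+k\ii tw^k)^{-1/k}$, where now $\overline{k\ii t\bar z^{-k}}=-k\ii tz^{-k}$, produces $z(1-k\ii tz^{-k})^{1/k}=\psi_{-k,t}(z)$; the cases $k=0$ from \eqref{eq:flow_0} are immediate. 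The point worth flagging is the asymmetry between the two identities: conjugation by $\tau$ reverses the time parameter for $\varphi_{k,t}$, the flow of the real vector field $v_k$, because passing from $k$ to $-k$ flips the sign of $ktz^k$; for $\psi_{k,t}$, the flow of $\ii v_k$, that sign flip is cancelled by the conjugation $\ii\mapsto-\ii$, so the time parameter is unchanged.

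Granting the flow identities, I would finish as follows. Since $\tau$ is an involution on $\Loop$, the identities rewrite as $\tau(\varphi_{k,t}(\g))=\varphi_{-k,-t}(\tau(\g))$ and $\tau(\psi_{k,t}(\g))=\psi_{-k,t}(\tau(\g))$; differentiating at $t=0$ and recalling that $L_k$, $L_k'$ are the real vector fields on $\Loop$ induced by $v_k$ and $\ii v_k$ (with flows $\varphi_{k,t}$, $\psi_{k,t}$), this gives $L_k\circ\tau^*=-\tau^*\circ L_{-k}$ and $L_k'\circ\tau^*=\tau^*\circ L_{-k}'$. Since $L_{-k}$ and $L_{-k}'$ are real vector fields, $\overline{L_{-k}^{\m C}}=\frac12(L_{-k}+\ii L_{-k}')$, and therefore, by \eqref{eq:df_Witt},
$$L_k^{\m C}\circ\tau^*=\frac12\big(L_k\circ\tau^*-\ii\,L_k'\circ\tau^*\big)=\frac12\big(-\tau^*\circ L_{-k}-\ii\,\tau^*\circ L_{-k}'\big)=-\tau^*\circ\overline{L_{-k}^{\m C}},$$
which is the claim. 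The only real obstacle is sign bookkeeping: keeping straight the anti-holomorphy of $\tau$ as a self-map of $\m C$ (the source of the differing time behavior of the two flows) and the branch choices in $(\cdot)^{1/k}$, both of which are harmless on the domains where $\varphi_{k,t}$ and $\psi_{k,t}$ are biholomorphic.
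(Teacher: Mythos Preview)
Your proof is correct and follows essentially the same approach as the paper: both verify the flow conjugation identities $\tau\circ\varphi_{k,t}\circ\tau=\varphi_{-k,-t}$ and $\tau\circ\psi_{k,t}\circ\tau=\psi_{-k,t}$ directly from the explicit formulas \eqref{eq:flow}--\eqref{eq:flow_0}, then differentiate at $t=0$ to obtain the real and imaginary parts $L_k\circ\tau^*=-\tau^*\circ L_{-k}$ and $L_k'\circ\tau^*=\tau^*\circ L_{-k}'$, and finally recombine via \eqref{eq:df_Witt}. Your exposition is slightly more detailed (the commentary on the sign asymmetry and the final recombination step are spelled out), but there is no substantive difference.
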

\begin{proof}
The statement is equivalent to  $L_k (\tau^* F) = -\tau^* (L_{-k} F)$ and $L'_k (\tau^* F) =  \tau^* (L'_{-k} F)$.
 To show this, we note that
    $$\tau \circ \varphi_{k,t} \circ \tau  (z)= \frac{(1+ktz^{-k})^{1/k}}{1/z} = \frac{z}{(1+ (-k)(-t)z^{-k})^{-1/k}} = \varphi_{-k, -t} (z),$$
    $$\tau \circ \psi_{k,t} \circ \tau  (z)= \frac{(1-\ii ktz^{-k})^{1/k}}{1/z} = \frac{z}{(1-\ii(-k)(-t)z^{-k})^{-1/k}} = \psi_{-k, t} (z).$$
 Hence,   
    \begin{align*}
        L_k (\tau^* F) (\g) & = \frac{\dd}{\dd t}\Big|_{t = 0} F \circ \tau \circ \varphi_{k,t} (\g) =  \frac{\dd}{\dd t}\Big|_{t = 0} F \circ \varphi_{-k,-t} \circ \tau (\g) \\
        & = - (L_{-k} F) \circ \tau (\g)  = - \tau^* (L_{-k} F) (\g)
    \end{align*}
    and similarly for $L'_k$ as claimed.
\end{proof}

Recall that we consider $L^{2}$ with the bilinear form defined by \eqref{eq:bilinear}, and we showed that the form is Hermitian and non-degenerate in Proposition~\ref{p.BilinearForm}.

\begin{thm}\label{thm:adjoint}
    The adjoint $\mc L_k^\dag$ of $\mc L_k$ with respect to $(\cdot, \cdot)_{\mu^c}$ is $\mc L_{-k}$ for all $k \in \m Z$. 
\end{thm}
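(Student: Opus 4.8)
The plan is to compute the adjoint of $\mc L_k = L_k^{\m C} + \Phi_k$ by handling the two summands separately and combining. First I would establish that the adjoint of the multiplication operator $\Phi_k$ is $M_{\tau^*\overline{\Phi_k}}$, as already noted before the statement; since $\Phi_k$ is a real multiple of $P_{-k}$ (for $k \le 0$) and zero otherwise, Lemma~\ref{lem:P_Q_tau} ($Q_m \circ \tau = \overline{P_m}$) tells us that $\tau^*\overline{\Phi_k}$ is, up to the constant $c/12$, the function $Q_{-k}$ for $k\le 0$ (and $0$ for $k>0$). The more substantial computation is the adjoint of the Witt-algebra operator $L_k^{\m C}$: using the integration by parts formula \eqref{eq:div_two_f_c} together with the intertwining relation $L_k^{\m C}\circ \tau^* = -\tau^*\circ \overline{L_{-k}^{\m C}}$ from Lemma~\ref{lem:L_tau_comm}, I would show that for cylinder functions $F,G$,
\begin{align*}
(L_k^{\m C} F, G)_{\mu^c} &= \int_{\Loop} \overline{L_k^{\m C} F}\; \tau^* G \,\dd\mu^c = \int_{\Loop} \overline{L_k^{\m C} F \cdot (\overline{\tau^* G})}\,\dd\mu^c \\
&= \overline{\int_{\Loop} (L_k^{\m C} F)\,(-\tau^*\overline{L_{-k}^{\m C}\,\overline{G}})\,\cdots}
\end{align*}
— more cleanly: express everything in terms of the (non-conjugated) bilinear pairing $\int F\,(L^{\m C}G)\,\dd\mu^c$, apply \eqref{eq:div_two_f_c} to move $L_k^{\m C}$ off $F$, pick up $-L_k^{\m C}+\Div_{\mu^c}(L_k^{\m C})$, then use Lemma~\ref{lem:L_tau_comm} to convert the $L_k^{\m C}$ acting against $\tau^*G$ into $\tau^*(L_{-k}^{\m C}\cdot)$ up to conjugation and sign. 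The divergence term is supplied explicitly by Proposition~\ref{prop:divergence}: $\Div_{\mu^c}(L_k^{\m C})$ equals $-\frac{c}{12}P_{-k}$ for $k\le 0$ and $\frac{c}{12}Q_k$ for $k\ge 0$.

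Then I would assemble the pieces: $(\mc L_k F, G)_{\mu^c} = (L_k^{\m C}F, G)_{\mu^c} + (\Phi_k F, G)_{\mu^c}$, and the target is $(F, \mc L_{-k}G)_{\mu^c} = (F, L_{-k}^{\m C}G)_{\mu^c} + (F, \Phi_{-k}G)_{\mu^c}$. The Witt part will produce $(F, L_{-k}^{\m C}G)$ plus a leftover multiplication term coming from $\Div_{\mu^c}(L_k^{\m C})$ and from the sign/conjugation bookkeeping in Lemma~\ref{lem:L_tau_comm}; the claim is exactly that this leftover, together with the adjoint of $\Phi_k$, reconstitutes $(F,\Phi_{-k}G)$. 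Concretely, for $k>0$ one expects: the Witt adjoint contributes $L_{-k}^{\m C}$ plus a term involving $\overline{\tau^*(\Div_{\mu^c} L_k^{\m C})}$-type data, i.e. $Q_k$ translated via $\tau$ into $\overline{P_k}$-flavored quantities, which must match $\Phi_{-k} = \frac{c}{12}P_k$; and $\Phi_k = 0$ so its adjoint vanishes — consistent. For $k\le 0$ the roles swap: $\Phi_k = \frac{c}{12}P_{-k}$ has a nonzero adjoint contributing (via Lemma~\ref{lem:P_Q_tau}) something proportional to $Q_{-k}$, while $\Phi_{-k}=0$, and the divergence term $-\frac{c}{12}P_{-k}$ must absorb the discrepancy. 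I would organize this as a single case analysis on the sign of $k$ (with $k=0$ falling under both), verifying the constants cancel using $\Div_{\mu^c}(L_k^{\m C})$, $Q_m\circ\tau = \overline{P_m}$, and $M_\Phi^\dag = M_{\tau^*\overline\Phi}$.

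The main obstacle I anticipate is getting the signs and complex conjugations right in the interplay between three facts that each carry a sign or conjugation: the definition $L_k^{\m C} = \frac12(L_k - \ii L_k')$, the intertwining $L_k^{\m C}\circ\tau^* = -\tau^*\circ\overline{L_{-k}^{\m C}}$, and the Hermitian (conjugate-linear in the first slot) structure of $(\cdot,\cdot)_{\mu^c}$. A clean way to keep track is to first prove the adjoint identity on the dense subspace $\mc C_c^\infty$ of cylinder functions (where Proposition~\ref{p.Denseness} guarantees density and Proposition~\ref{p.Stability} guarantees the operators preserve the space, so all integrations by parts are legitimate), reducing the statement to the scalar identity that the ``extra'' multiplication operator produced equals $M_{\Phi_{-k}} - M_{\tau^*\overline{\Phi_k}}$ pointwise — an identity among the explicitly known functions $P_m, Q_m$ that then follows from Lemma~\ref{lem:P_Q_tau} and Proposition~\ref{prop:divergence}. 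A secondary, more routine point is to note that closability/denseness (already recorded after Corollary~\ref{cor:vir_gen} and in Proposition~\ref{p.Denseness}) makes the formal adjoint computation on cylinder functions suffice to identify $\mc L_k^\dag \supseteq \mc L_{-k}$, and symmetry in $k \leftrightarrow -k$ gives equality on the natural domain.
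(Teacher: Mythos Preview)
Your proposal is correct and follows essentially the same route as the paper: both use Lemma~\ref{lem:L_tau_comm} to pass $\tau^*$ through $L_k^{\m C}$, Lemma~\ref{lem:P_Q_tau} to convert between $P$- and $Q$-polynomials under $\tau$, the integration-by-parts formula \eqref{eq:div_two_f_c} together with Proposition~\ref{prop:divergence} to supply the divergence term, and a case split on the sign of $k$. The paper's execution is only slightly more economical in that it computes $(F,\mc L_k G)_{\mu^c}$ in one chain for $k\le 0$ rather than treating $L_k^{\m C}$ and $\Phi_k$ separately, and then observes that $k\ge 0$ follows by reading the same computation backward.
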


\begin{proof}
Let $k \le 0$, $F, G : \Loop \to \m C$, then
   \begin{align*}
       (F, \mc L_k G)_{\mu^c} & = \int_{\Loop} \overline {F} \tau^* \left(L_k^{\m C} G + \frac{c}{12} P_{-k} G\right) \, \dd \mu^c \\
    \text{\small ``by Lemma~\ref{lem:L_tau_comm}''}   & =\int_{\Loop}  - \overline{F L_{-k}^{\m C}} (\tau^* G)  + \frac{c}{12} \overline {F} \tau^* (P_{-k}  G  )\, \dd \mu^c \\ 
       \text{\small ``by Lemma~\ref{lem:P_Q_tau}''}   & =\int_{\Loop}  - \overline{F L_{-k}^{\m C}} (\tau^* G) + \frac{c}{12} \overline {F Q_{-k}} ( \tau^* G) \, \dd \mu^c \\
        \text{\small ``by \eqref{eq:div_two_f_c}''}   & =\int_{\Loop}  \overline{(L_{-k}^{\m C} F)}  (\tau^* G) - \overline{\Div_{\mu^c} (L_{-k}^{\m C}) F}  (\tau^* G) \\
        & \hspace{40pt} + \frac{c}{12} \overline{F   Q_{-k}}(\tau^* G) \, \dd \mu^c \\
        \text{\small ``by Prop.\,\ref{prop:divergence}''}  & = \int_{\Loop}  \overline{(L_{-k}^{\m C} F)}  (\tau^* G) \, \dd \mu^c \\
        &
        =  (\mc L_{-k} F,  G)_{\mu^c}.
   \end{align*}
    For $k \ge 0$, it suffices to backtrack the proof above. We obtain therefore $\mc L_k^\dag = \mc L_{-k}$ for all $k \in \m Z$.
   \end{proof}

\begin{prop}
The vector fields $\mc L_k$ from Corollary~\ref{cor:vir_gen} are densely defined and closable operators on $L^2 ( \Loop, \mu^{c})$.  
\end{prop}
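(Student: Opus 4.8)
The plan is to take the cylinder functions $\mathcal{C}^{\infty}_{c}$ from Definition~\ref{df.cylinder_fun} as a common operator core. Since $\mathcal{C}^{\infty}_{c}$ is dense in $L^{2}(\Loop,\mu^{c})$ by Proposition~\ref{p.Denseness}, the first step is to check that $\mathcal{C}^{\infty}_{c}$ lies in the domain of each $\mc L_{k}=L_{k}^{\m C}+\Phi_{k}$, i.e.\ that $\mc L_{k}$ maps $\mathcal{C}^{\infty}_{c}$ into $L^{2}$. By Proposition~\ref{p.Stability} we have $L_{k}^{\m C}(\mathcal{C}^{\infty}_{c})\subseteq\mathcal{C}^{\infty}_{c}$; and $\Phi_{k}$ equals, up to the real factor $c/12$, either $0$ or the Neretin function $P_{-k}$, which lies in $\mathcal{C}^{\infty}$ by Lemma~\ref{lem:Neretin_property}, so by Remark~\ref{rem:topology} (a smooth cylinder function is automatically compactly supported in the finitely many coordinates on which it depends) multiplication by $\Phi_{k}$ preserves $\mathcal{C}^{\infty}_{c}$. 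Hence $\mc L_{k}(\mathcal{C}^{\infty}_{c})\subseteq\mathcal{C}^{\infty}_{c}$ and $\mc L_{k}$, with domain $\mathcal{C}^{\infty}_{c}$, is densely defined.

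For closability I would invoke the standard criterion that a densely defined operator on a Hilbert space is closable if and only if its adjoint is densely defined. One must be careful: the adjoint computed in Theorem~\ref{thm:adjoint} is taken with respect to the indefinite Hermitian form $(\cdot,\cdot)_{\mu^{c}}$, not the genuine inner product $\langle F,G\rangle:=\int_{\Loop}\overline{F}\,G\,\dd\mu^{c}$ of $L^{2}(\Loop,\mu^{c})$, so it cannot be quoted directly. Instead I would exhibit an explicit formal adjoint on $\mathcal{C}^{\infty}_{c}$ using the integration by parts formula \eqref{eq:div_two_f_c}. Writing $\overline{L_{k}^{\m C}F}=M\overline{F}$ with $M:=\tfrac12(L_{k}+\ii L_{k}')$ the complex vector field conjugate to $L_{k}^{\m C}$ (which also preserves $\mathcal{C}^{\infty}_{c}$ by Proposition~\ref{p.Stability}), and noting $\Div_{\mu^{c}}(M)=\overline{\Div_{\mu^{c}}(L_{k}^{\m C})}$ since the divergences of the real vector fields $L_{k},L_{k}'$ are real-valued, one obtains for all $F,G\in\mathcal{C}^{\infty}_{c}$
\[
\langle \mc L_{k}F,G\rangle=\int_{\Loop}(M\overline{F})\,G\,\dd\mu^{c}+\int_{\Loop}\overline{\Phi_{k}}\,\overline{F}\,G\,\dd\mu^{c}=\langle F,\mc L_{k}^{\sharp}G\rangle,
\]
where $\mc L_{k}^{\sharp}G:=-MG+\bigl(\overline{\Div_{\mu^{c}}(L_{k}^{\m C})}+\overline{\Phi_{k}}\bigr)G$.

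It then remains to verify that $\mc L_{k}^{\sharp}G\in L^{2}(\Loop,\mu^{c})$ for every $G\in\mathcal{C}^{\infty}_{c}$. The term $-MG$ lies in $\mathcal{C}^{\infty}_{c}\subset L^{2}$. For the multiplicative term, Proposition~\ref{prop:divergence} identifies $\Div_{\mu^{c}}(L_{k}^{\m C})$ with $-\tfrac{c}{12}P_{-k}$ (for $k\le0$) or $\tfrac{c}{12}Q_{k}$ (for $k\ge0$); by Lemmas~\ref{lem:Neretin_property} and~\ref{lem:P_Q_tau} these, like $\Phi_{k}$, are cylinder functions, hence bounded on every slab $\{\rho_{0}\in[a,b]\}$ (using $|u_{n}|\le n+1$ together with the scaling relation~\eqref{eq:scale_neretin}). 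Since any $G\in\mathcal{C}^{\infty}_{c}$ is bounded and supported in such a slab, the product $\bigl(\overline{\Div_{\mu^{c}}(L_{k}^{\m C})}+\overline{\Phi_{k}}\bigr)G$ is again a bounded, compactly supported cylinder function, in particular in $\mathcal{C}^{\infty}_{c}\subset L^{2}$. Thus $\langle F,\mc L_{k}^{\sharp}G\rangle$ shows $F\mapsto\langle\mc L_{k}F,G\rangle$ is $L^{2}$-bounded for each $G\in\mathcal{C}^{\infty}_{c}$, so $\mathcal{C}^{\infty}_{c}$ lies in the domain of the Hilbert-space adjoint $\mc L_{k}^{\ast}$, which is therefore densely defined and $\mc L_{k}$ is closable. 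The only delicate point is the bookkeeping between the two bilinear forms — the genuine $L^{2}$-adjoint of $\mc L_{k}$ is $\tau^{\ast}\mc L_{-k}\tau^{\ast}$ rather than $\mc L_{-k}$ itself, and since it is not transparent that $\tau^{\ast}$ preserves $\mathcal{C}^{\infty}_{c}$, it is cleaner to work with the explicit $\mc L_{k}^{\sharp}$ above, which in turn rests only on the mild growth control for the Neretin polynomials provided by~\eqref{eq:scale_neretin}.
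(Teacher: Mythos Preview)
Your argument that each $\mc L_k$ is densely defined is correct and matches the paper. The formal Hilbert adjoint $\mc L_k^{\sharp}=-M+\bigl(\overline{\Div_{\mu^c}(L_k^{\m C})}+\overline{\Phi_k}\bigr)$ is also computed correctly, and for $k\le 0$ the multiplicative term vanishes (since $\Phi_k+\Div_{\mu^c}(L_k^{\m C})=0$ by Proposition~\ref{prop:divergence}), so $\mc L_k^{\sharp}=-M$ preserves $\mathcal{C}^{\infty}_{c}$ and closability follows.

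The gap is the case $k>0$, where the multiplier is $\tfrac{c}{12}\overline{Q_k}$. Your assertion that $Q_k$ is a cylinder function bounded on every slab $\{\rho_0\in[a,b]\}$ is not supported by the lemmas you cite. Lemma~\ref{lem:Neretin_property} gives $P_k\in\mc C^{\infty}$, but $Q_k$ is built from the \emph{exterior} map $g$, whose coefficients depend on \emph{all} of the coordinates $(u_n)$ of $f$, not finitely many. Lemma~\ref{lem:P_Q_tau} only says $Q_k=\overline{P_k}\circ\tau$, and $\tau$ does not preserve slabs: a loop with $\rho_0=0$ can have $\rho_0\circ\tau$ arbitrarily negative (take $\gamma$ bounding a long thin region of large diameter). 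Thus the boundedness of $Q_k$ on $\Loop^r$ does not transfer to $\{\rho_0\in[a,b]\}$, and the scaling relation~\eqref{eq:scale_neretin} concerns $P_k$, not $Q_k$. Showing $\overline{Q_k}\,G\in L^2$ for $G\in\mathcal{C}^{\infty}_{c}$ would require an actual moment bound on $Q_k$ under $\mathbf P^c$ that the paper does not provide.

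The paper sidesteps this entirely, and you dismissed its route too quickly. Because $\mu^c$ is $\tau$-invariant (Lemma~\ref{lem:tau}), $\tau^{\ast}$ is a unitary self-adjoint involution on $L^2(\Loop,\mu^c)$, and one checks that the Hilbert and Krein adjoints are related by $\mc L_k^{\ast}=\tau^{\ast}\mc L_k^{\dag}\tau^{\ast}$ with $\operatorname{dom}(\mc L_k^{\ast})=\tau^{\ast}\bigl(\operatorname{dom}(\mc L_k^{\dag})\bigr)$. Theorem~\ref{thm:adjoint} gives $\mc L_k^{\dag}\supset\mc L_{-k}$ on $\mathcal{C}^{\infty}_{c}$, hence $\operatorname{dom}(\mc L_k^{\ast})\supset\tau^{\ast}(\mathcal{C}^{\infty}_{c})$. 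Your worry that $\tau^{\ast}$ need not preserve $\mathcal{C}^{\infty}_{c}$ is beside the point: since $\tau^{\ast}$ is unitary, $\tau^{\ast}(\mathcal{C}^{\infty}_{c})$ is dense whenever $\mathcal{C}^{\infty}_{c}$ is. That single observation replaces your entire $\mc L_k^{\sharp}$ computation and completely avoids the $Q_k$ issue.
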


\begin{proof}
These operators are densely defined by Proposition~\ref{p.Denseness} and by the fact that the multiplication operator is densely defined, as we observed earlier. Now we can use the standard fact in \cite[Thm.\,VIII.1]{ReedSimonI} to see that for these operators to be closable, it is enough to check that $\mc L_k^{\dag}$  are densely defined. But these are operators of the same form by Theorem~\ref{thm:adjoint}, so they are densely defined.  Therefore the closure of  $\mc L_k$ is $\mc L_k^{\dag \dag}$.  
\end{proof}

\vspace{10pt}

\emph{During the final stages of the current article's preparation, we learned that G.~Baverez and A.~Jego were independently working on the Virasoro representations built from the SLE loop measure. They have taken the construction further to develop the CFT for the SLE loop measure and prove many other results, including the uniqueness of Malliavin--Kontsevich--Suhov measure for a given central charge $c \le 1$.  
We encourage the interested reader to have a look at their paper \cite{Baverez_Jego}.}

\subsection*{Acknowledgement}
We thank Peter Lin, Eveliina Peltola, Steffen Rohde, and the participants at the MSRI reading seminar in the Spring 2022 semester for helpful discussions. We are also grateful to Guillaume Baverez and Antoine Jego for communicating with us about their preprint \cite{Baverez_Jego}, which is independent of our work. MG thanks Karl-Hermann Neeb for answering her questions about representation theory. 

This work has been supported by the NSF Grant DMS-1928930 while the authors participated in a program hosted by the Simons Laufer Mathematical Sciences Institute in Berkeley, California, during the Spring 2022 semester.
MG was supported in part by NSF grant DMS-2246549. MG acknowledges the support of the Hausdorff Center of Mathematics (Bonn, Germany) and the IHES (France), where parts of the work were completed.
WQ is partially supported by a GRF grant from the Research Grants Council of the Hong Kong SAR (project CityU11305823). YW was supported by the European Union  (ERC, RaConTeich, 101116694).\footnote{Views and opinions expressed are however those of the authors only and do not necessarily reflect those of the European Union or the European Research Council Executive Agency. Neither the European Union nor the granting authority can be held responsible for them.}

\bibliographystyle{plain}
\bibliography{ref}
\end{document}